\theoremstyle{plain}
\newtheorem{theorem}{Theorem}[section]
\newtheorem{lemma}[theorem]{Lemma}
\newtheorem{proposition}[theorem]{Proposition}
\newtheorem{fact}[theorem]{Fact}
\newtheorem*{fact*}{Fact}
\newtheorem{corollary}[theorem]{Corollary}
\newtheorem{assumption}[theorem]{Assumption}
\newtheorem*{assumption*}{Assumption}
\newtheorem{assumptions}[theorem]{Assumptions}
\newtheorem*{assumptions*}{Assumptions}
\theoremstyle{definition}
\newtheorem{definition}[theorem]{Definition}
\newtheorem*{definition*}{Definition}
\newtheorem*{notation*}{Notation}
\theoremstyle{remark}
\newtheorem{claim}{Claim}[theorem]
\newtheorem*{claim*}{Claim}
\newtheorem{remark}[theorem]{Remark}
\newtheorem*{remark*}{Remark}
\newtheorem{example}[theorem]{Example}
\newtheorem*{example*}{Example}
\newtheorem*{note*}{Note}
\newtheorem{question}[theorem]{Question}
\newtheorem*{question*}{Question}
\begin{document}
\providecommand{\defn}[1]{{\bf #1}}
\providecommand{\bdl}{\boldsymbol\delta}
\providecommand{\calT}{\mathcal{T}}
\renewcommand{\P}{\mathbb{P}}
\newcommand{\pow}{\mathcal{P}}
\renewcommand{\o}{\circ}
\newcommand{\C}{{\mathbb{C}}}
\newcommand{\R}{{\mathbb{R}}}
\newcommand{\N}{{\mathbb{N}}}
\newcommand{\Z}{{\mathbb{Z}}}
\newcommand{\K}{{\mathbb{K}}}
\newcommand{\A}{{\mathbb{A}}}
\newcommand{\G}{{\mathbb{G}}}
\newcommand{\U}{{\mathcal{U}}}
\newcommand{\fin}{{\operatorname{fin}}}
\newcommand{\tp}{{\operatorname{tp}}}
\newcommand{\st}{{\operatorname{st}}}
\newcommand{\cl}{{\operatorname{cl}}}
\newcommand{\trd}{{\operatorname{trd}}}
\newcommand{\Div}{{\operatorname{Div}}}
\newcommand{\Pic}{{\operatorname{Pic}}}
\newcommand{\acl}{{\operatorname{acl}}}
\newcommand{\locus}{{\operatorname{locus}}}
\newcommand{\Aut}{{\operatorname{Aut}}}
\newcommand{\SO}{{\operatorname{SO}}}
\newcommand{\PSO}{{\operatorname{PSO}}}
\newcommand{\PO}{{\operatorname{PO}}}
\newcommand{\GL}{{\operatorname{GL}}}
\newcommand{\PGL}{{\operatorname{PGL}}}
\newcommand{\Fix}{{\operatorname{Fix}}}
\newcommand{\x}{{\overline{x}}}
\newcommand{\y}{{\overline{y}}}
\renewcommand{\a}{{\overline{a}}}
\renewcommand{\d}{{\overline{d}}}
\renewcommand{\L}{{\mathcal{L}}}

\newcommand{\cstr}{constructible$^0$}

\providecommand{\FIXME}[1]{({\bf \small FIXME: #1})}
\providecommand{\TODO}[1]{({\bf \small TODO: #1})}

\newcommand\subqed[1]{\qedhere$_{\mathit{\ref{#1}}}$}

\def\Ind#1#2{#1\setbox0=\hbox{$#1x$}\kern\wd0\hbox to 0pt{\hss$#1\mid$\hss}
\lower.9\ht0\hbox to 0pt{\hss$#1\smile$\hss}\kern\wd0}
\def\ind{\mathop{\mathpalette\Ind\emptyset }}
\def\notind#1#2{#1\setbox0=\hbox{$#1x$}\kern\wd0\hbox to 0pt{\mathchardef
\nn=12854\hss$#1\nn$\kern1.4\wd0\hss}\hbox to
0pt{\hss$#1\mid$\hss}\lower.9\ht0 \hbox to
0pt{\hss$#1\smile$\hss}\kern\wd0}
\def\nind{\mathop{\mathpalette\notind\emptyset }}

\title[Elekes-Szabó for collinearity on cubic surfaces]
{Elekes-Szabó for collinearity on cubic surfaces}
\author{Martin Bays, Jan Dobrowolski, and Tingxiang Zou}
\address{Martin Bays, Mathematical Institute, University of Oxford,
Andrew Wiles Building, Radcliffe Observatory Quarter, Woodstock Road,
Oxford, OX2 6GG}
\address{Tingxiang Zou, Institut für Mathematische Logik und 
Grundlagenforschung, Fachbereich Mathematik und Informatik, Universität 
Münster, Einsteinstrasse 62, 48149 Münster, Germany}
\address{Jan Dobrowolski, Xiamen University Malaysia,
	Department of Mathematics,
	Jalan Sunsuria, Bandar Sunsuria, 43900 Sepang, Selangor Darul Ehsan, Malaysia \emph{and}\newline 
 Instytut Matematyczny, Uniwersystet Wrocławski, Wrocław, Poland}
\email{mbays@sdf.org, dobrowol@math.uni.wroc.pl, tzou@uni-muenster.de}
\thanks{Dobrowolski was supported by DFG project BA 6785/2-1 and by EPSRC
Grant EP/V03619X/1. All authors were supported by the DFG via BA 6785/1-1 
(part of the ANR-DFG project Geomod).}
\footnotetext{MSC Primary: 52C10; Secondary: 03C98, 14C17.}
\maketitle
\begin{abstract}
  We study the orchard problem on cubic surfaces. We classify possibly 
  reducible cubic surfaces $X\subseteq \P^3(\C)$ with smooth components on 
  which there exist families of finite sets (of unbounded size) with 
  quadratically many 3-rich lines which do not concentrate (in a natural 
  sense) on any projective plane. Namely, we prove that such a family exists 
  precisely when $X$ is a union of three planes sharing a common line.

  Along the way, we obtain a general result about nilpotency of groups 
  admitting an algebraic action satisfying an Elekes-Szabó condition.
\end{abstract}

\tableofcontents

\section{Introduction}
\subsection{Summary of results}
We prove the following result for the 3-dimensional orchard problem on a 
complex cubic surface $S$, showing that if a large finite subset of $S$ has 
many collinear triples, this must be due to a coplanar subset.
\begin{theorem}[Corollary~\ref{c:mainOrchardFinite}, symmetric case]
  \label{t:mainOrchardFiniteIntro}
  For any $\epsilon > 0$ there exists $\eta > 0$ and $N_0 \in \N$ such that
  if $S \subseteq  \P^3(\C)$ is a cubic surface with smooth irreducible components
  which is not the union of three planes sharing a common line, and $A \subseteq  S$ 
  is a finite subset with $|A| \geq  N_0$, and $|C| \geq  |A|^{2-\eta}$ where
  $C := \{(a_1,a_2,a_3) \in A^3 : a_1,a_2,a_3 \text{ are distinct, collinear, 
  and not contained in a line contained in $S$}\}$,
  then for some plane $\pi \subseteq  \P^3(\C)$
  we have $|C \cap \pi^3| \geq  |A|^{2-\epsilon}$.
\end{theorem}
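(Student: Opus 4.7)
The plan is to derive this as the symmetric specialization of the paper's main Elekes--Szabó classification (Corollary~\ref{c:mainOrchardFinite}) via three layers of argument: the general power-saving Elekes--Szabó machinery, a Geiser-involution analysis on the smooth components of $S$, and an algebraic group rigidity argument. Set up the ternary relation $R := \{(a,b,c) \in S^3 : a,b,c \text{ distinct, collinear, not on a line contained in } S\}$, a symmetric $2$-dimensional algebraic correspondence in which any two coordinates determine the third (outside the finite union of lines inside $S$). Applying the general power-saving Elekes--Szabó theorem: if $|R \cap A^3| \geq  |A|^{2-\eta}$ with $\eta$ small and no concentration on $\pi^3$ occurs, then on each irreducible component of $S^3$, $R$ must be in algebraic correspondence with the ``sum-zero'' relation $\{g_1+g_2+g_3 = 0\}$ in some algebraic group $G$, and symmetry of $R$ forces $G$ commutative.

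The key geometric reinterpretation is via Geiser involutions $\sigma_a \colon S \dashrightarrow S$, where $\sigma_a(x)$ is the third intersection of the line $\overline{ax}$ with $S$, defined on each smooth component. A commutative group structure compatible with $R$ makes $\sigma_a \sigma_b \sigma_c \sigma_d$ a translation in $G$ on a generic fibre, hence forces infinitely many fixed points for generic $a,b,c,d$. The algebro-geometric statement advertised in the abstract then says that on a smooth irreducible cubic surface such a fixed configuration forces $a,b,c,d$ and almost all fixed points into a single plane, directly contradicting non-concentration---so the irreducible smooth case reduces cleanly to plane concentration, giving the desired $|C \cap \pi^3| \geq  |A|^{2-\epsilon}$.

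The main obstacle is the reducible case, where one must show that three planes through a common line is the \emph{only} configuration of smooth components admitting non-concentrated quadratic 3-rich families. Here one invokes the paper's nilpotency result on algebraic group actions satisfying an Elekes--Szabó condition to constrain the commutative groups $G$ that can arise across components; then a case analysis on how a generic line in $\P^3$ can meet the smooth components of $S$---three planes in general position, a plane plus a smooth quadric, or three planes through a common line---shows that only the last supports a genuine $2$-parameter family of non-coplanar 3-rich lines, namely the lines transverse to the common axis meeting each plane once. In all other reducible configurations the Geiser-involution rigidity on the relevant component again forces concentration on a single plane, completing the classification.
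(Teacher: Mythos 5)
The proposal misstates the logical framework in a way that would leave the hardest case unproved. You invoke ``the general power-saving Elekes--Szabó theorem'' to conclude that if $|R \cap A^3| \geq |A|^{2-\eta}$ without plane concentration then $R$ is in algebraic correspondence with a group addition $\{g_1+g_2+g_3=0\}$. But that theorem (cf.\ \cite{ES-groups}, or \cite{BB-cohMod}) requires a general position hypothesis on the sets $A_i$---roughly that they do not concentrate on proper subvarieties---and the entire motivation of this paper is that such a hypothesis \emph{cannot} be assumed here: on a cubic surface, sets with quadratically many triple lines can perfectly well pile up on curves (indeed on plane cubics), which is exactly the behaviour being classified. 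Without general position the ES reduction to a group simply does not apply. The paper develops the ``weak general position'' formalism precisely to manage this failure, not to restore the hypothesis.

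Consequently your treatment of the irreducible smooth case does not match what is actually available. You say a commutative group structure makes $\sigma_a\sigma_b\sigma_c\sigma_d$ a translation and hence produces many fixed points; but (i) a nontrivial translation has \emph{no} fixed points, and (ii) more importantly, the group structure you are deriving this from does not exist---the paper is explicit that on an irreducible smooth cubic the Geiser involutions do \emph{not} generate an algebraic group and the group-reduction route is a dead end. The actual argument runs the other way: the hypothesis of quadratically many collinear triples yields an ``energy'' relation $E_S$ whose cardinality is too large to be $K_{2,s}$-free, and a modified Szemerédi--Trotter incidence bound (Proposition~\ref{p:constInc}, Corollary~\ref{c:distIncBdl}, together with Tóth's complex projective incidence bound) then forces the existence of a $K_{2,s}$ with $s$ unboundedly large. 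The two rows of that bipartite graph correspond to a pair $(a,b),(d,c)$, and its many columns are common fixed points of $\gamma_d\gamma_c\gamma_b\gamma_a$. At that point the algebro-geometric Theorem~\ref{t:coplanar} (proved by divisor-class and arithmetic-genus computations on the blow-up picture of $S$) applies to give coplanarity. So you need the incidence-theoretic route, not the group correspondence, to even reach the Geiser fixed-point theorem; the proposal as written skips the step that does all the work. The paragraph on the reducible case is closer to what happens in \S4 (nilpotency via coherent actions, then a case analysis on $\PGL_3$ or $\PSO_4$ subgroups), though it is too vague to assess in detail.

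Finally a minor point: the passage from the qualitative classification to the finitary bound in Theorem~\ref{t:mainOrchardFiniteIntro} is not automatic from the plan as stated; in the paper it is a compactness/ultraproduct argument (Corollary~\ref{c:mainOrchardFinite} from Theorem~\ref{t:mainOrchard}), which the proposal does not mention.
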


\subsection{Background and motivation}
\subsubsection{Orchard problems}
The (planar) \emph{orchard problem} asks for finite configurations of points 
in the real plane such that many lines pass through exactly three of the 
points. Call such lines \emph{3-rich}. See \cite{BGS-orchard} for details 
of the long history of this problem, and the result that for any $n\geq 3$, 
taking a subgroup of order $n$ of an elliptic curve yields a configuration 
with $\lfloor n(n-3)/6\rfloor+1$ 3-rich lines. In \cite[Theorem~1.3, 
\S9]{GT-ordinaryLines}, Green and Tao show that this is optimal for large $n$, 
and moreover that taking finite subgroups of irreducible cubics is the only 
source of such optimal configurations.

In \cite{ES-orchard}, Elekes and Szabó consider a coarse asymptotic version of 
the problem: for what configurations of $n$ points in $\R^2$ are there $\sim 
n^2$ lines passing through at least three of the points? Call such lines
\emph{triple lines}.
In this context, any (possibly reducible) planar cubic curve provides such 
configurations, again by considering group structures induced by collinearity. 
If one restricts to the case that the configuration is to be a subset of an 
algebraic curve, the problem becomes an instance of that considered in 
\cite{ES-groups}, where algebraic groups are seen to be the only possible way 
to achieve the quadratic bound; \cite{ES-orchard} uses this to show that only
cubic curves yield quadratically many triple lines. A subsequent strengthening
by Raz, Sharir, and de Zeeuw \cite[Corollary~6.2]{RSdZ-ESR} gives a bound
$O_d(n^{2 - \frac16})$ on the number of such lines for $n$ points on an
irreducible curve of degree $d>3$. 

The motivation behind this paper is the problem of extending the main
higher-dimensional result of \cite{ES-groups} (Theorem 27)
by dropping the restrictive general position assumption made 
in that paper. As a natural example of this, we consider the 
\emph{spatial orchard problem}, in which one now considers configurations of 
$n$ points in $\R^3$ and counts the triple lines. See 
\cite{dZ-space} and its references for what literature there is on 
these spatial problems. We can again formulate a problem of Elekes-Szabó-type
by taking an algebraic surface in $\R^3$ and asking for configurations of $n$
points on the surface with $\sim n^2$ triple lines. We have uninteresting
solutions to this: if $S$ is a cubic surface, then the intersection of $S$
with a plane is a cubic curve, which typically supports a configuration as
above. Distributing our points across a family of $m=m(n)$ such planes, we can
get $\Omega(\frac{n^2}m)$ triple lines with $O(\frac {n^2}{m^2})$ of them on
any given plane.
To rule out these unoriginal solutions, we may phrase the question as follows 
(where we also pass to the more algebraically natural generality of surfaces 
in complex projective space rather than $\R^3$):

\begin{question} \label{q:introQ}
  For which algebraic surfaces $S \subseteq  \P^3(\C)$ are there $\epsilon>0$ 
  and $c>0$ such that arbitrarily large finite subsets $A \subseteq  S$
  support $\geq  c|A|^2$ triple lines,
  no $|A|^{2-\epsilon}$ of which are coplanar?
\end{question}

We answer this question with additional conditions on $S$: we assume $S \subseteq  
\P^3(\C)$ is a possibly reducible cubic surface, i.e.\ the zero-set of a cubic 
homogeneous polynomial with no repeated factors, and that each irreducible 
component is smooth. We find that such an $S$ satisfies the conditions of 
Question~\ref{q:introQ} if and only if $S$ is the union of three planes sharing a 
common line. This is a consequence of Corollary~\ref{c:mainOrchardFinite} stated 
above, which in fact has better and uniform (albeit inexplicit) bounds.


\subsubsection{Higher-dimensional Elekes-Szabó}
As mentioned above, our real aim with this paper is to develop techniques to help tackle a much more general problem, what we term the \emph{higher-dimensional Elekes-Szabó problem}.
One symmetric formulation of this for surfaces is as follows. Let $R 
\subseteq  S_1 \times S_2 \times S_3$ be a constructible ternary relation 
between algebraic surfaces, such that the projection to any pair $S_i \times 
S_j$ is dominant with generically finite fibres. Suppose that for any $n>0$, 
we can find $N \geq  n$ and $A_{i,n} \subseteq  S_i$ with $|A_{i,n}| = N$ such 
that $|R \cap \prod_i A_{i,n}| \geq  N^{2-\frac1n}$. Then what can we say 
about $R$ or the $A_{i,n}$? In \cite{ES-groups}, slightly refined in 
\cite{BB-cohMod}, it is shown that either $R$ is the image under algebraic 
correspondences on the co-ordinates of the graph of a commutative group 
operation, or for some $i$ and $\epsilon>0$, there is no bound of the form 
$|C\cap A_{i,n}| = O(|A_{i,n}|^\epsilon)$ on the size of the intersection with 
algebraic curves $C \subseteq S_i$. However, this second case of 
non-negligible intersections with curves can occur;
a concrete example is given in \cite[\S 4]{BB-cohMod}. All known examples
arise from algebraic actions of nilpotent algebraic groups (and their
approximate subgroups), and we expect this to be the correct general context;
see \cite[Question~7.5]{homogES} for a precise formulation. The present work
analyses this problem in the particular case that $R$ is the collinearity
relation on $S_1=S_2=S_3$ a cubic surface (discounting collinearities along
lines contained in the surface). Our results confirm our general expectation in
this case, and our methods already generalise to a wider class of such
problems, as we discuss below.

\subsection{Overview}
Our treatment of collinearity on a cubic surface splits into two largely 
disjoint cases, according to whether the surface is reducible or not. In the 
reducible case, the surface is either the union of a smooth quadric surface 
$Q$ and a plane $P$, or the union of three distinct planes. In the former 
case, for $z \in Q \setminus P$ we obtain an automorphism $\gamma_z \in 
\Aut(Q) \cong  \PO_4$ of the quadric, defined generically by $\gamma_z(x) = y$ 
if $(x,y,z)$ are distinct and
collinear. Considering this along with the assumption that we have
quadratically many collinear triples, we prove a general result
(Theorem~\ref{t:cohAct}) showing that in such a situation we obtain an approximate
subgroup of the automorphism group, of a kind which contradicts results of
Breuillard-Green-Tao \cite{BGT-lin} unless its Zariski closure is a nilpotent
subgroup. Analysing the possibilities for such subgroups yields the result in
this case. The case of three planes is similar (and easier).

In the case of a smooth irreducible cubic surface, the corresponding 
birational maps $\gamma_z$, known as \emph{Geiser involutions} in this case, 
do not generate an algebraic group, and so the techniques of the previous 
paragraph do not apply. Instead, we argue as follows. If $A \subseteq  S$ supports 
quadratically many 3-rich lines, then the ``energy'' relation
\begin{align*} E := \{ &((x_1,x_2),(y_1,y_2)) \in S^2 \times S^2 : \exists z \in S.\; \\
  &((x_1,y_1,z) \text{ and } (x_2,y_2,z) \text{ are collinear, and } x_1,y_1,x_2,y_2,z \text{ are distinct})\}
\end{align*}
has $|E\cap A^4| \sim |A|^3$, which would contradict known bounds of 
Szemerédi-Trotter type if $E$ omitted the bipartite complete graph $K_{2,s}$ 
for sufficiently small $s$. In fact $E$ does not have this property, but we 
show in Theorem~\ref{t:coplanarK2s} that any (sufficiently generic) $K_{2,s}$ 
in (a slightly modified version of) $E$ is planar after deleting a bounded finite subset. Adapting the proof of 
such Szemerédi-Trotter-type bounds (we use the version due to 
Chernikov-Galvin-Starchenko, \cite{CGS,CPS}), we deduce that the energy of $A$ 
concentrates on planes. Using the bounds in Tóth's result \cite{Toth} on 
Szemerédi-Trotter in $\P^2(\C)$, we reduce to the case that the energy 
concentrates on a single plane, which yields our result.

These combinatorial arguments leave us with the purely geometric problem of 
proving Theorem~\ref{t:coplanarK2s}. For this, we consider a curve $C$ of 
fixed points of the composition of four (sufficiently generic) Geiser 
involutions $(\gamma_{a_i})_{i<4}$, and use algebro-geometric techniques to 
show that $C$ is planar, and coplanar with the $a_i$. The main part of this 
argument is a careful analysis ruling out hypothetical exceptional cases where 
the $a_i$ are singular points on $C$; for this we use divisor class 
calculations and the arithmetic genus.

In handling the combinatorial aspects of these arguments, our first move will 
be to pass to pseudofinite sets (ultraproducts of finite sets),
following \cite{Hr-psfDims}. This pseudofinite setting converts asymptotic 
questions about arbitrarily large finite sets into more convenient questions 
about individual pseudofinite sets. We make the further move of working with 
complete types in a suitable language, in the sense of first-order logic. This 
small abstraction leads to cleaner statements and proofs; in particular it 
makes ``popularity arguments'' an invisible part of the formalism. 
Furthermore, it brings into play useful tools from model theory, particularly 
independence calculus via Hrushovski's coarse pseudofinite dimension, which 
converts asymptotic exponents in finite cardinalities into a real-valued 
dimension with good properties. This leads us naturally to the notion of 
\emph{weak general position}, which is a useful formalisation of the idea of 
the smallest subvariety on which a configuration concentrates, and is a key 
technical tool in our analysis.

We conclude this section by discussing possible generalisations of our methods 
to the general higher-dimensional Elekes-Szabó problem discussed above. On one 
hand, this paper illustrates that a straightforward solution is unlikely, 
since even in this special case the analysis is difficult and intricate. More 
positively, some of the methods and ideas of this paper are applicable to more 
general cases, as some further work (\cite{homogES}, \cite{autC2}) has already 
demonstrated.
Firstly, Theorem~\ref{t:cohAct} confirms that when an action of an algebraic 
group supports a higher-dimensional Elekes-Szabó configuration, the relevant 
part of the group must indeed be nilpotent; this is useful for 
generalisations, in particular when combined (as in \cite{homogES}) with the 
homogeneous space version of the group configuration theorem, which produces 
such a group action. Meanwhile, the case of a smooth irreducible surface $S$
can be seen (via a birational map $S \to \P^2$) as a matter of the action on
$\P^2$ of an algebraically Ind-definable group, namely the subgroup of
the Cremona group of birational automorphisms of $\P^2$ generated by the
Geiser involutions. As above, we expect in general that Elekes-Szabó
configurations obtained from such actions reduce to (nilpotent) algebraic
subgroups of the Ind-definable group, and our result here can be seen as
a case of this (with the algebraic group being that of planar cubic curve on
which the configuration concentrates). Question~\ref{q:introQ} corresponds to
a generalisation to the group of generically finite-to-finite correspondences
of a surface. Studying $K_{2,s}$'s for energy via analysing curves of fixed
points of compositions, which was the crucial step in the present case of
collinearity, turns out to be useful for this general problem of actions of
Ind-definable groups, as in \cite{autC2}.

\subsection{Outline of the paper}
\S\ref{s:prelims} contains preliminary definitions and results required for the 
rest of the paper, in particular developing what we need of the calculus of 
pseudo-finite dimensions.
In \S\ref{s:EStri}, we use this calculus to reduce our main results to a 
certain condition that ``wES-triangles are planar''. The following sections 
then prove this separately in the cases mentioned above. \S\ref{s:reducible} 
deals with the reducible case. This relies on an abstract result, independent of the main text and proven in Appendix~\ref{s:cohAct}, on nilpotence of groups with a ``coherent action''. 
We apply this first in the warm-up case \S\ref{s:planes} of three 
non-collinear planes, then in the case \S\ref{s:quadric} of a quadric union a 
plane. Next we handle in \S\ref{s:irreducible} the case of an irreducible 
smooth cubic surface. This again makes use of some results independent of the main 
text and proven in appendices: \S\ref{s:STvariant} on our revised version of 
the Szemerédi-Trotter bounds, and \S\ref{s:geiser} on the algebraic geometry 
of Geiser involutions, including our key result Theorem~\ref{t:coplanar} on 
fixed curves of compositions of four Geiser involutions. Finally, 
\S\ref{s:concl} brings the various cases together, and returns to the finitary 
setting to obtain (a slight generalisation of) 
Theorem~\ref{t:mainOrchardFiniteIntro}.

\subsection{Acknowledgements}
Thanks to Emmanuel Breuillard for originally suggesting the problem, and for helpful 
initial discussion.
Thanks to Yifan Jing for helpful discussion around the topic of 
Remark~\ref{r:bourgain}.
We are grateful to the anonymous referees for suggestions which significantly
improved the presentation and structure of the paper.

\section{Preliminaries}
\label{s:prelims}
\subsection{Non-standard setup}
As discussed above, our proofs make extensive use of coarse pseudofinite 
dimensions.
We give an abbreviated version of the necessary setup. For a more leisurely 
account of the same approach, see \cite[Section~2]{BB-cohMod}.

Let $\U$ be a non-principal ultrafilter on $\N$.
For a set $Z$ the \defn{internal} subsets of the ultrapower $Z^\U$ are
the elements of $\pow(Z)^\U \subseteq \pow(Z^\U)$.
The finite cardinality function $|\cdot| : \pow(Z) \to \N \cup \{\infty\}$ 
induces a function $|\cdot| : \pow(Z)^\U \to \N^\U \cup \{\infty\}$ defining the 
non-standard cardinality $|X|$ of an internal subset $X$ of $Z^\U$.
We fix some $\xi \in \R^\U$ greater than any standard real,
and for $\alpha \in \R^\U_{\geq 0}$ we set $\bdl(\alpha) = \bdl_\xi(\alpha) := 
\st \log_\xi \alpha \in \R \cup \{-\infty,\infty\}$,
where $\st : \R^\U \cup \{-\infty,\infty\} \to \R \cup \{-\infty,\infty\}$ is 
the standard part map,
and we set $\bdl(X) := \bdl(|X|)$.
In other words, $\bdl(\prod_{i\rightarrow \U} X_i)$ is (when finite) the metric 
ultralimit in $\R$ along $\U$ of $\log_{\xi_i}(|X_i|)$, where $\xi = 
(\xi_i)_i/\U$.

A \defn{$\bigwedge$-internal} subset of $Z^\U$ is an intersection $\bigcap_{i \in 
\omega} X_i$ of countably many internal subsets $X_i$.
We define $\bdl(\bigcap_{i \in \omega} X_i) := \inf_{n \in \omega} 
\bdl(\bigcap_{i<n} X_i)$.

A $\bigwedge$-internal set $X$ is \defn{broad} if $0 < \bdl(X) < \infty$.

Recall that $Z^\U$ is $\aleph_1$-compact, meaning that a $\bigwedge$-internal
set $\bigcap_{i \in \omega} X_i$ is non-empty as long as each finite 
intersection $\bigcap_{i < n} X_i$ is non-empty.


\begin{lemma} \label{l:intInType}
  Any $\bigwedge$-internal set $X$ contains an internal subset $X' \subseteq  X$ with 
  $\bdl(X') = \bdl(X)$.
\end{lemma}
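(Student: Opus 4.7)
The plan is to invoke $\aleph_1$-compactness, applied to the multi-sorted non-standard universe obtained by adjoining to $Z^\U$ a sort for $\P(Z)^\U$, so that internal subsets of $Z^\U$ themselves become first-class elements over which one can form complete types. First I would replace the sequence defining $X$ by the sequence of finite intersections, so that WLOG $X = \bigcap_n X_n$ with $X_0 \supseteq X_1 \supseteq \cdots$ a decreasing chain of internal sets; then $\bdl(X) = \inf_n \bdl(X_n) = \lim_n \bdl(X_n) =: r$. The case $r = -\infty$ is trivial, since non-standard cardinality is $\N^\U$-valued and so $\bdl$ takes values in $\{-\infty\} \cup [0,\infty]$; hence $r = -\infty$ forces $X_n = \emptyset$ for some $n$ and one may take $X' := \emptyset$.

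Assume now $r \in [0, \infty]$. I would consider, in a variable $Y$ ranging over the internal-set sort, the countable partial type $\Sigma(Y)$ containing the condition ``$Y \subseteq X_n$'' for each $n \in \omega$, together with either ``$|Y| \geq \xi^{r - 1/k}$'' for each $k \in \omega$ (when $r$ is finite) or ``$|Y| \geq \xi^k$'' for each $k \in \omega$ (when $r = \infty$). Finite satisfiability is witnessed by $Y := X_N$ for $N$ large enough: monotonicity of the chain handles the containment conditions, and since $\bdl(X_N) \geq r$ the quantity $\log_\xi |X_N|$ has standard part at least $r$, so it strictly exceeds $r - 1/k$ (resp.\ $k$) in $\R^\U$, delivering the size conditions.

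By $\aleph_1$-compactness, $\Sigma$ is realized by some internal set $X'$. The containment conditions give $X' \subseteq \bigcap_n X_n = X$, whence $\bdl(X') \leq \bdl(X_n)$ for every $n$ and so $\bdl(X') \leq r$ in the limit; the size conditions give $\bdl(X') \geq r$; together these yield $\bdl(X') = r$, as required. The one point needing care is the justification that $\aleph_1$-saturation passes from $Z^\U$ to the expanded multi-sorted structure, but this is automatic, since that expansion is itself a standard ultraproduct of the corresponding index-level structures.
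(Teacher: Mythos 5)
Your proposal is correct and takes essentially the same route as the paper: both realize, by $\aleph_1$-compactness (finite satisfiability witnessed by suitable finite intersections $\bigcap_{i<N}X_i$), the countable family of conditions ``$Y\subseteq X_i$'' together with the lower cardinality bounds $|Y|\ge\xi^{r-1/m}$. You only differ in presentation, casting the argument as a partial type over an internal-set sort and spelling out the $r=\infty$ and $r=-\infty$ cases that the paper treats summarily.
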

\begin{proof}
  Say $X = \bigwedge_i X_i$ with each $X_i$ internal.
  Suppose $a := \bdl(X) \in \R$ (the case $a=\infty$ is similar, and in the 
  case $a=-\infty$ we can take $X' := \emptyset $).
  Then finite intersections of
  \[\{ \{ S \in \pow(Z)^\U : |S| \geq \xi^{a - \frac1m} \} : m > 0 \}
  \cup \{ \{ S \in \pow(Z)^\U : S \subseteq X_i \} : i \in \omega \}\]
  are non-empty, as witnessed by $\bigcap_{i<n} X_i$. So by 
  $\aleph_1$-compactness (applied now to $\pow(Z)^\U$ rather than to $Z^\U$), 
  there is some $X'\in\pow(Z)^\U$ satisfying all these conditions. Recalling the 
  definition of $\bdl$, this means that $\bdl(X') \geq a = \bdl(X)$ and $X' 
  \subseteq X$, and we conclude by observing that $X' \subseteq X$ implies 
  $\bdl(X') \leq \bdl(X)$.
\end{proof}

Throughout the paper, we let $\K := \C^\U$.\footnote{We use $\C$ here only for concreteness; all we 
will actually use about $\K$ is that it is an ultraproduct across $\U$ of algebraically closed  
fields of characteristic 0.}

\subsection{Coarse pseudofinite independence}
\label{s:psfInd}
Here we define $\bdl$ for tuples and recall its basic properties.
All tuples will be assumed to be of finite length.

Consider $\K$ as a structure in a countable language $\L$ expanding the ring 
language, and suppose that every $\L$-definable set of $\K$ is internal.

Then for $C \subseteq  \K$ countable and $a$ a tuple from $\K$,
let $\bdl(a/C) = \bdl^\L(a/C) := \bdl(\tp(a/C)(\K)) \in [0,\infty]$ where $\tp(a/C)(\K) = 
\tp^\L(a/C)(\K) = \bigcap_{\phi \in \tp^\L(a/C)}\phi(\K)$ is considered as a 
$\bigwedge$-internal set.
We include $\L$ in the notation only when we need to make it explicit.
We write $a \ind ^{\bdl}_C b$ to mean $\bdl(a/C) = \bdl(a/Cb)$.

We assume that $\bdl$ is continuous for $\L$, i.e.\ given $n,m\ge 1$, $\alpha 
\in \R$, $\epsilon>0$ and a  $\emptyset$-definable set $Y \subseteq K^{n} 
\times K^m$ there is a $\emptyset$-definable set $W \subseteq K^m$ such that 
$$ \{b \in K^m : \bdl(Y_{b}) \geq \alpha+\epsilon\} \subseteq W \subseteq \{b 
\in K^m : \bdl(Y_{b}) \geq \alpha\},$$ where $Y_{b}$ is the fibre $\{x \in 
K^n: (x,b) \in Y\}$. See \cite[Section~2.1.6]{BB-cohMod} for the fact that it 
is harmless to assume this, since one can ``close off'' the language to a 
countable language in which $\bdl$ is continuous and definability still 
implies internality.

The reader should be aware that our language $\L$ will not be fixed throughout 
the paper; we sometimes increase $\L$ by adding new internal sets as 
predicates or adding constants, while always ensuring that $\L$ is countable and 
$\bdl$ is continuous. Nonetheless, we mostly suppress $\L$ from the notation.

We have the following basic properties, which we will use throughout without 
comment. We write $a' \equiv _C a$ to mean $\tp(a'/C) = \tp(a/C)$.
\begin{fact} \label{f:bdlBasics}
  Let $a,b,d$ be tuples from $\K$ and $C \subseteq  \K$ a countable subset,
  and suppose $\bdl(a/C),\bdl(b/C),\bdl(d/C) < \infty$
  \begin{itemize}
  	\item $\trd(a/C)=0\Rightarrow \bdl(a/C)=0$ \hfill{(Algebraicity)}
  	\item $a \equiv _C b \Rightarrow  \bdl(a/C) = \bdl(b/C)$ \hfill{(Invariance)}
  \item $\bdl(ab/C) = \bdl(a/Cb) + \bdl(b/C)$ \hfill{(Additivity)}
  \item $a \ind ^{\bdl}_C b \Leftrightarrow  b \ind ^{\bdl}_C a$ \hfill{(Symmetry)}
  \item $ab \ind ^{\bdl}_C d \Leftrightarrow  (b \ind ^{\bdl}_C d \wedge a \ind ^{\bdl}_{Cb} d)$ 
  \hfill{(Monotonicity and Transitivity)}
  \item For any tuple $e$ from $\K$, there exists $a' \equiv _C a$ such that $a' 
  \ind ^{\bdl}_C e$; \hfill{(Extension)}\\
  more generally, any broad $\bigwedge$-definable set $X$ over $C$ contains an 
  element $x$ with $\bdl(x/C) = \bdl(X)$.
  \end{itemize}
\end{fact}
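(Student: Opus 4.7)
The plan is: treat Invariance as immediate from the definition; derive Additivity as the core Fubini-style identity using Lemma~\ref{l:intInType} and counting in internal fibrations; deduce Symmetry and Monotonicity/Transitivity algebraically from Additivity; and prove Extension via a separate $\aleph_1$-compactness argument in the spirit of Lemma~\ref{l:intInType}.

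Invariance is immediate: if $a \equiv_C b$ then $\tp(a/C)(\K)$ and $\tp(b/C)(\K)$ are literally the same $\bigwedge$-internal set. For Additivity, apply Lemma~\ref{l:intInType} to replace $\tp(b/C)(\K)$ and $\tp(ab/C)(\K)$ by internal subsets $Y$ and $X$ of the same $\bdl$, shrinking $X$ so that it projects into $Y$. The counting identity $|X| = \sum_{b' \in Y}|X_{b'}|$ combined with $|X| \leq |Y| \cdot \max_{b'}|X_{b'}|$ yields some $b' \in Y$ with $\bdl(X_{b'}) \geq \bdl(ab/C) - \bdl(b/C)$; since $b' \in Y \subseteq \tp(b/C)(\K)$ we have $b' \equiv_C b$, and an automorphism over $C$ identifies $X_{b'}$ with a subset of $\tp(a/Cb)(\K)$, giving $\bdl(a/Cb) \geq \bdl(ab/C) - \bdl(b/C)$. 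The reverse inequality is obtained by a more delicate refinement in which one arranges, using continuity of $\bdl$ together with the countability of $\L$, that the approximating $\phi \in \tp(ab/C)$ has nearly uniform fibres over $b$-coordinates, so that the maximum fibre is controlled by the average and hence by $\bdl(ab/C) - \bdl(b/C)$.

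Symmetry follows from writing Additivity in two ways, $\bdl(a/Cb) + \bdl(b/C) = \bdl(ab/C) = \bdl(b/Ca) + \bdl(a/C)$, so that $\bdl(a/C) - \bdl(a/Cb) = \bdl(b/C) - \bdl(b/Ca)$; one side vanishes iff the other does. Monotonicity and Transitivity come from expanding $\bdl(abd/C)$ in two ways, $\bdl(a/Cbd) + \bdl(b/Cd) + \bdl(d/C)$ and $\bdl(d/Cab) + \bdl(a/Cb) + \bdl(b/C)$: the hypothesis $ab \ind^{\bdl}_C d$ gives $\bdl(d/Cab) = \bdl(d/C)$, and since each conditional $\bdl$ is bounded above by its unconditioned counterpart (via inclusion of type realisation sets), equating the two expansions forces both $\bdl(b/Cd) = \bdl(b/C)$ and $\bdl(a/Cbd) = \bdl(a/Cb)$, which are the individual independences; the converse direction is a trivial substitution in the same identity.

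For Extension, by Lemma~\ref{l:intInType} fix internal $Z \subseteq \tp(a/C)(\K)$ with $\bdl(Z) = \bdl(a/C)$, and consider $W \subseteq Z$ consisting of $a' \in Z$ with $\bdl(a'/Ce) < \bdl(a/C)$. Then $W$ is a countable union of internal pieces $Z \cap \psi(\K)$ for $\psi \in \L(Ce)$ of $\bdl$ strictly less than $\bdl(a/C)$. By $\aleph_1$-compactness $Z \setminus W$ is non-empty provided every finite union of such $\psi(\K)$ fails to cover $Z$, which is immediate from the gap between the lower bound $\xi^{\bdl(a/C) - 1/k}$ on $|Z|$ and finite sums of $\xi^{\bdl(a/C) - 1/m}$-sized sets. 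Any $a' \in Z \setminus W$ then realises $\tp(a/C)$ and satisfies $a' \ind^{\bdl}_C e$. The general broad-set statement is the same argument: Lemma~\ref{l:intInType} gives internal $X' \subseteq X$ of full $\bdl$, and pigeonhole over the countably many complete types over $C$ realised in $X'$ yields one realised on a subset of full $\bdl$. The main obstacle throughout is the upper-bound direction of Additivity, where the naive max-fibre estimate points the wrong way and one has to control typical fibres via continuity of $\bdl$; the remaining ingredients are straightforward algebraic manipulations and $\aleph_1$-compactness.
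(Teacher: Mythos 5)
The paper disposes of this fact in one line each: Invariance is ``immediate from the definition'', Additivity is delegated to a citation (\cite[Fact~2.8]{BB-cohMod}), Symmetry and Monotonicity/Transitivity are noted to follow formally from Additivity, and Extension is obtained from $\aleph_1$-compactness. Your treatment of Invariance, Symmetry, Monotonicity/Transitivity, and Extension agrees with this in substance; the algebraic derivations of Symmetry and the Monotonicity/Transitivity equivalence from Additivity are correct, and the $\aleph_1$-compactness argument for Extension is the right one (one nit: the ``pigeonhole over the countably many complete types over $C$'' phrasing in your broad-set case is off, since there may be continuum many such types; but the first-clause argument you give --- forming the countable family of complements of $\L(Ce)$-definable sets of small $\bdl$ and using the gap estimate --- applies verbatim to the broad-set case and is what the paper intends).

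The genuine gap is in your proof of Additivity, which the paper does not attempt in-house. In your direction $\bdl(a/Cb)\geq\bdl(ab/C)-\bdl(b/C)$ you take the fibre $X_{b'}$ at a conjugate $b'\equiv_C b$ and assert that ``an automorphism over $C$ identifies $X_{b'}$ with a subset of $\tp(a/Cb)(\K)$''. Even granting that a suitable automorphism $\sigma$ of the $\L$-structure $\K$ exists, this does not help: $\sigma$ need not send internal sets to internal sets, nor preserve nonstandard cardinality, so $\bdl(\sigma(X_{b'}))$ has no relation to $\bdl(X_{b'})$. What one has directly is only the inclusion $X_{b'}\subseteq\tp(a/Cb')(\K)$, and to pass from $\bdl(\tp(a/Cb')(\K))$ to $\bdl(\tp(a/Cb)(\K))$ one needs exactly the invariance of $\bdl$ across conjugate realisations of $\tp(b/C)$. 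That is not the stated Invariance bullet and is not automatic from the definition of $\bdl$; it is precisely what the ``continuity of $\bdl$'' assumption (closing off the language as in \cite[Section~2.1.6]{BB-cohMod}) is there to provide. So your first direction silently relies on continuity via an automorphism step that does not actually work. The reverse direction you describe only as ``a more delicate refinement'' arranging ``nearly uniform fibres'' --- but this is the half where the real work in Hrushovski-style proofs of Additivity lies (the usual argument stratifies a witness $\phi\in\tp(ab/C)$ by fibre size over the $b$-coordinate, uses that the stratum containing $b$ is $C$-definable of $\bdl\geq\bdl(b/C)$, and applies a counting estimate), and you do not supply it. As written, then, your proposal does not constitute a complete proof of Additivity, which is the one nontrivial ingredient the fact rests on.
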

\begin{proof}
	Algebraicity follows from the fact that $\bdl(X)=0$ if $X$ is finite.
  Invariance is immediate from the definition.
  For Additivity, see e.g.\ \cite[Fact~2.8]{BB-cohMod}.
  Symmetry and Monotonicity and Transitivity follow.
  The second clause of Extension is a consequence of $\aleph_1$-compactness 
  and the countability of $\L$ and $C$,
  and the first clause follows, since it yields $a' \in \tp(a/C)(\K)$ with 
  $\bdl(a'/Ce) = \bdl(\tp(a/C)(\K)) = \bdl(a/C)$.
\end{proof}

We will often apply these definitions and properties to tuples of elements of 
projective algebraic varieties or of arbitrary algebraic groups, rather than 
just to tuples of elements of $\K$. This can be understood by extending the 
above definitions and arguments to such elements, or by using elimination of 
imaginaries in $\operatorname{ACF}$ to reduce to the case of tuples, as in 
\cite[Section~2.1.10]{BB-cohMod}; in the case of a projective variety, this 
just comes down to taking co-ordinates in one of the standard affine patches 
$\mathbb{A}^n \subseteq  \P^n$.

By \emph{constructible} (over $C \subseteq  \K$) we mean: definable (over $C$) in the 
field $(\K;+,\cdot)$. By quantifier elimination in the theory of algebraically 
closed fields, the constructible subsets of a variety are precisely the 
boolean combinations of Zariski closed subsets. The dimension $\dim(X)$ of a 
constructible set is the dimension of its Zariski closure.

\begin{lemma} \label{l:genericFibre}
  Suppose $f: X \rightarrow  Y$ is a constructible map of constructible sets,
  and $A \subseteq  X$ is a broad $\bigwedge$-definable subset,
  all over $C \subseteq  \K$,
  and $a \in A$ satisfies $\bdl(a/C) = \bdl(A)$.

  Then $a$ is also $\bdl$-generic in its fibre of $f$,
  namely $\bdl(a/Cf(a)) = \bdl(f^{-1}(f(a)) \cap A)$.
\end{lemma}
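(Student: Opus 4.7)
The plan is to establish the two inequalities between $\bdl(a/Cf(a))$ and $\bdl(f^{-1}(f(a)) \cap A)$ separately. The $\leq$ direction is immediate: the set $f^{-1}(f(a)) \cap A$ is $\bigwedge$-definable over $Cf(a)$ and contains $a$, so each of its defining internal conjuncts lies in $\tp(a/Cf(a))$; hence $\tp(a/Cf(a))(\K)$ is contained in $f^{-1}(f(a)) \cap A$ (as an intersection of more formulas), and passing to infima of $\bdl$-values over finite subconjunctions gives $\bdl(a/Cf(a)) \leq \bdl(f^{-1}(f(a)) \cap A)$.

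For the reverse direction I would apply the second clause of Extension in Fact~\ref{f:bdlBasics} to $f^{-1}(f(a)) \cap A$, viewed as a $\bigwedge$-definable set over $Cf(a)$ with $\bdl$ lying in $[0,\bdl(A)]$. Assuming this $\bdl$ is strictly positive (the case $\bdl = 0$ is handled trivially via nonnegativity of $\bdl$ on nonempty $\bigwedge$-internal sets together with the first-direction inequality), this produces $a' \in f^{-1}(f(a)) \cap A$ with $\bdl(a'/Cf(a)) = \bdl(f^{-1}(f(a)) \cap A)$. Since $a' \in A$, the same easy-direction argument gives $\bdl(a'/C) \leq \bdl(A) = \bdl(a/C)$. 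Now apply Additivity twice, using that $f(a') = f(a)$ is in $\dcl(Ca') \cap \dcl(Ca)$:
\[\bdl(a'/C) = \bdl(a'/Cf(a)) + \bdl(f(a)/C), \qquad \bdl(a/C) = \bdl(a/Cf(a)) + \bdl(f(a)/C).\]
Subtracting the common finite summand $\bdl(f(a)/C)$ (it is bounded by $\bdl(a/C) < \infty$) yields $\bdl(f^{-1}(f(a)) \cap A) = \bdl(a'/Cf(a)) \leq \bdl(a/Cf(a))$, as wanted.

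The only point that requires care is the broadness assumption needed to apply Extension, which I have addressed above by separating the degenerate $\bdl = 0$ case. Beyond that, the argument is a clean bookkeeping exercise in Additivity and the two halves of Extension: the first half (realisations of a type) is used implicitly via monotonicity of $\bdl$ on $\bigwedge$-internal sets, while the second half (maximum-$\bdl$ elements in broad sets) is the engine of the reverse inequality. I do not anticipate any deeper obstacle.
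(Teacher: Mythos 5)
Your proof is correct and follows essentially the same route as the paper: take $a' \in f^{-1}(f(a)) \cap A$ of maximal $\bdl$ via Extension, use Additivity through $f(a)$ to compare $\bdl(a/Cf(a))$ with $\bdl(a'/Cf(a))$, and invoke $\bdl(a'/C) \leq \bdl(A) = \bdl(a/C)$. The only cosmetic difference is that you explicitly spell out the easy $\leq$ direction and isolate the degenerate $\bdl(F)=0$ case (where broadness fails and Extension's second clause technically doesn't apply), whereas the paper leaves both implicit; that bit of care is harmless and in fact a small improvement in rigor.
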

\begin{proof}
  Let $F := f^{-1}(f(a)) \cap A$,
  and let $a' \in F$ with $\bdl(a'/Cf(a)) = \bdl(F)$.

  Then
  \begin{align*} \bdl(a/Cf(a)) &= \bdl(a/C) - \bdl(f(a)/C)
  \\&= \bdl(A) - \bdl(f(a)/C)
  \\&\geq  \bdl(a'/C) - \bdl(f(a)/C)
  \\&= \bdl(a'/Cf(a)) = \bdl(F) ,\end{align*}
  and the converse inequality is immediate.
\end{proof}

We furthermore assume that the set of interpretations of the constants in our 
language $\L$ is an algebraically closed subfield of $\K$.
We can always ensure this assumption holds by adding parameters.

\begin{definition}\label{d:zero}
  Let $E \leq  \K$ be the algebraically closed field of interpretations of the 
  constants in our language $\L$. Then we use superscript-0 to refer to the 
  ring language expanded by constants for $E$, as follows.
  Let $C$ be a subset of $\K$.
  \begin{itemize}\item $\trd^0(a/C) := \trd(a/CE)$ is transcendence degree relativised to $E$.
  \item $\ind ^0$ means algebraic independence over $E$; that is, $a \ind ^0_C b$ if and only if 
  $\trd^0(a/Cb) = \trd^0(a/C)$.
  \item $\acl^0(C) := \acl^{\operatorname{ACF}}(CE)$ is the smallest algebraically closed 
  subfield of $\K$ containing $CE$.
  \item $a \equiv ^0_C b$ means that $a$ and $b$ have the same field type over $CE$, 
  i.e.\ they are in the same constructible sets over $CE$.
  \item If $V$ is a variety over $CE$ and $a \in V(\K)$, then $\locus^0(a/C)$ is the smallest 
  algebraic subvariety $W$ of $V$ over $CE$ which contains $a$.
  \item A \defn{$C$-{\cstr}} set is a set definable in the ring language with 
  constants
  for $E$ and $C$, in other words a constructible set over $CE$.
  \end{itemize}

  In the case $C=\emptyset$, we omit it from the notation.
  In particular, a \defn{{\cstr}} set is a $\emptyset$-{\cstr} set.
  When we want to make the choice of language (and hence the choice of $E$) 
  explicit, we write $\trd^{0,\L}$ etc.
\end{definition}

Note that $\trd^0$ and $\ind ^0$ satisfy the properties of Fact~\ref{f:bdlBasics}. We 
will use these properties without comment.

If a variety $V$ is constructible over a countable $C \subseteq  \K$,
a \defn{generic} point of $V$ over $C$ is an $a \in V$ such that $\trd(a/C) = \dim(V)$,
i.e.\ $a$ is contained in no lower-dimensional subvariety of $V$ over $C$.
Such an $a$ exists by $\aleph_1$-compactness of $\K$. We say $a$ is generic in
$V$ if it is generic in $V$ over some $C$ (over which $V$ is constructible).

%
%
%
%

To summarise, we are making the following assumptions about our language $\L$ 
interpreted on $\K$, and the constant $\xi$:
\begin{assumptions}\label{a:L}
  $\L$ is a countable expansion of the ring language, every $\L$-definable set 
  of $\K$ is internal, $\bdl=\bdl_\xi$ is continuous for $\L$, and the 
  interpretations of the constants in $\L$ form an algebraically closed 
  subfield.
\end{assumptions}

\subsubsection{Weak general position}
\begin{definition} \label{d:wgpType}
  For a countable set $C \subseteq  \K$ and a tuple $a \in \K$,
  the complete type $\tp(a/C)$ is \defn{in weak general position} (or is 
  \defn{wgp})
  if $\bdl(a/C) < \infty$ and
  for any tuple $b \in \K$ such that $\trd^0(a/Cb) < \trd^0(a/C)$,
  we have $\bdl(a/Cb) < \bdl(a/C)$.

  Equivalently: for any $b$, if $a \ind ^{\bdl}_C b$ then $a \ind ^0_C b$.
\end{definition}
\begin{remark}
  It follows from invariance of $\bdl$ that wgp is indeed a property of the 
  type rather than its realisation.
\end{remark}

\begin{definition} \label{d:wgpSet}
  A $\bigwedge$-internal set $A$ is \defn{wgp in} a constructible set $X \supseteq A$ if 
  $\bdl(A) < \infty$ and
  $\bdl(A \cap Y) < \bdl(A)$ for any constructible $Y \subseteq  X$ over $\K$ with 
  $\dim(Y) < \dim(X)$.
\end{definition}

\begin{remark} \label{r:wgpUnif}
  Compactness yields uniformity in this definition:
  if $A$ is wgp in $X$,
  and $(V_b)_b$ is a constructible family of lower dimensional 
  constructible subsets of $X$,
  then there exists $\eta>0$ such that
  $\bdl(A \cap V_b) \leq  \bdl(A) - \eta$ for all $b$.
\end{remark}

\begin{lemma} \label{l:wgpWgp}
	  Let $V$ be a variety and let $a \in V(\K)$.
 Then $\tp(a/C)$ is wgp if and only if the $\bigwedge$-internal set $\tp(a/C)(\K)$ is 
  wgp in $\locus^0(a/C)$.
\end{lemma}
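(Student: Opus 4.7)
The plan is to unfold both definitions and check the two directions separately. Let $W := \locus^0_V(a/C)$; then $\dim(W) = \trd^0(a/C)$, and every element of $\tp(a/C)(\K)$ lies in $W$ (since $W$ is $C$-\cstr/).

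For the easier direction (set-wgp implies type-wgp), I would assume $\tp(a/C)(\K)$ is wgp in $W$. Given any $b$ with $\trd^0(a/Cb) < \trd^0(a/C)$, I set $Y := \locus^0_V(a/Cb) \subseteq W$, a $Cb$-\cstr/ proper subvariety with $\dim(Y) = \trd^0(a/Cb) < \dim(W)$. Since $Y$ is $Cb$-\cstr/ and contains $a$, we have $\tp(a/Cb)(\K) \subseteq \tp(a/C)(\K) \cap Y$, so wgp in $W$ yields
\[ \bdl(a/Cb) \leq \bdl(\tp(a/C)(\K) \cap Y) < \bdl(a/C), \]
which is exactly type-wgp.

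For the harder direction (type-wgp implies set-wgp), I would assume $\tp(a/C)$ is wgp and argue by contradiction: suppose some constructible $Y \subseteq W$, defined over a tuple $b \in \K$, has $\dim(Y) < \dim(W)$ and $\bdl(\tp(a/C)(\K) \cap Y) = \bdl(a/C)$. Since $\tp(a/C)(\K)$ is $\bigwedge$-definable over $C$ and $Y$ is $\L$-definable over $b$, their intersection is $\bigwedge$-definable over $Cb$, and broad (the case $\bdl(a/C)=0$ forces $\trd^0(a/C)=0$ and both sides become trivial). By the extension clause of Fact~\ref{f:bdlBasics}, I pick $a' \in \tp(a/C)(\K) \cap Y$ with $\bdl(a'/Cb) = \bdl(a/C)$. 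Since $a' \equiv _C a$, $\bdl(a'/C) = \bdl(a/C) = \bdl(a'/Cb)$, so $a' \ind ^{\bdl}_C b$. Applying wgp to $\tp(a'/C) = \tp(a/C)$ then gives $a' \ind ^0_C b$, whence $\trd^0(a'/Cb) = \trd^0(a'/C) = \dim(W)$. But $a' \in Y$ and $Y$ is $Cb$-\cstr/, forcing $\trd^0(a'/Cb) \leq \dim(Y) < \dim(W)$, a contradiction.

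The one substantive move is producing this realization $a'$: the hypothesis is phrased in terms of types and the conclusion in terms of the geometric locus $W$, so I need to turn a hypothetical exceptional $Y$ into a single realization of $\tp(a/C)$ lying in $Y$ yet $\bdl$-free over the parameters defining $Y$. The extension property does precisely this, at which point wgp converts $\bdl$-freeness into algebraic independence and the dimension counts collide. If the direct argument were obstructed by $\bigwedge$-definability issues, I would first pass through Lemma~\ref{l:intInType} to replace the intersection by an internal subset of equal $\bdl$-dimension before invoking extension, but in the setup here no such detour is needed.
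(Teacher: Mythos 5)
Your proof is correct and follows essentially the same route as the paper's. The paper proves both directions contrapositively; you prove one direction directly and the other by contradiction, but the underlying moves are identical: for ``set-wgp $\Rightarrow$ type-wgp'' you set $Y := \locus^0_V(a/Cb)$ and observe $\tp(a/Cb)(\K) \subseteq \tp(a/C)(\K) \cap Y$ to transfer the $\bdl$-bound, exactly as the paper does; for the converse you use the Extension clause of Fact~\ref{f:bdlBasics} to realize the partial type $\tp(a/C) \cup \{x \in Y\}$ by some $a'$ with $\bdl(a'/Cb) = \bdl(a/C)$, then unwind the definitions, which is precisely the paper's first paragraph.
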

\begin{proof}
  Suppose $\tp(a/C)(\K)$ is not wgp in $\locus^0(a/C)$,
  so say $V_b \subseteq  \locus^0(a/C)$ is a constructible subset of lower dimension, 
  but $\bdl(\tp(a/C)(\K) \cap V_b(\K)) = \bdl(a/C)$.
  Then say $a'$ realises a completion of the partial type $\tp(a/C) \cup \{x 
  \in V_b\}$ with $\bdl(a'/Cb) = \bdl(a/C) = \bdl(a'/C)$. Then $a' \in V_b$, so 
  $\trd^0(a'/Cb) < \trd^0(a'/C)$, so $\tp(a/C) = \tp(a'/C)$ is not wgp.

  Conversely, if $\tp(a/C)$ is not wgp,
  then say $b$ is such that $\trd^0(a/Cb) < \trd^0(a/C)$ but $\bdl(a/Cb) = \bdl(a/C)$.
  Let $W := \locus^0(a/Cb) \subseteq  \locus^0(a/C)$.
  Then $\dim(W) < \dim(\locus^0(a/C))$
  but $\bdl(\tp(a/C)(\K) \cap W(\K)) = \bdl(a/C)$,
  so $\tp(a/C)(\K)$ is not wgp in $\locus^0(a/C)$.
\end{proof}

The following lemma, which is essentially a reformulation of 
\cite[Lemma~2.13]{Hr-psfDims},
will allow us to assume wgp by adding suitable parameters to the language.

\begin{lemma} \label{l:wgpEventually}
  Suppose $\bdl(a/C) < \infty$, where $a \in \K$ is a tuple and $C \subseteq  \K$ is a 
  countable set.
  \begin{enumerate}[(i)]\item
  There exists a tuple $d \in \K$ with $\bdl(d/C) = 0$ such that $\tp(a/Cd)$ is 
  wgp.
  \item
  $\tp(a/C)$ is wgp if and only if $a \ind ^0_C d$ for all tuples $d \in \K$ with 
  $\bdl(d/C) = 0$.
  \end{enumerate}
\end{lemma}
\begin{proof}
  \begin{enumerate}[(i)]\item
  By induction on $\trd^0(a/C)$.
  If $\tp(a/C)$ is wgp, we are done (with $d$ the empty tuple).
  Otherwise, say $a \nind^0_C b$ but $\bdl(a/Cb) = \bdl(a/C)$.
  By the induction hypothesis, by extending $b$ we may assume that $\tp(a/Cb)$ 
  is wgp.
  Let $n \in \N$, and let $\a=(a_i)_{i<n}$ be a $\bdl$-independent tuple of 
  realisations of $\tp(a/\acl^0(Cb))$, meaning $a_i \equiv _{\acl^0(Cb)} a$ and 
  $a_i \ind ^{\bdl}_{\acl^0(Cb)} a_{<i}$. Since $\tp(a/Cb)$ is wgp, $\a$ is also 
  a $\ind ^0$-independent sequence of realisations of $\tp^0(a/\acl^0(Cb))$.
  By a standard result (see e.g.\ \cite[Lemma~1.2.28(ii)]{GST}), taking $n$ 
  large enough,
  it follows that there exists a tuple $b' \in \acl^0(Cb) \cap \acl^0(\a)$ 
  such that $a \ind ^0_{b'} Cb$.
  Then $\bdl(b'/C) = \bdl(\a/C) - \bdl(\a/Cb') \leq  \bdl(\a/C) - \bdl(\a/Cb) = 
  0$.

  Now $\trd^0(a/Cb') < \trd^0(a/C)$ and $\bdl(a/Cb') \leq  \bdl(a/C) < \infty$,
  so by the induction hypothesis,
  say $d'$ is such that $\tp(a/Cb'd')$ is wgp,
  and $\bdl(d'/Cb') = 0$.
  Then $d := b'd'$ is as required.
  \item
  Suppose $a \ind ^0_C d$ for all tuples $d \in \K$ with $\bdl(d/C) = 0$.
  Let $d$ be as in (i), so $\tp(a/Cd)$ is wgp and $\bdl(d/C) = 0$, so $a 
  \ind ^0_C d$. We show that $\tp(a/C)$ is wgp. So suppose $a \nind ^0_C b$. Then 
  $a \nind ^0_{Cd} b$, so $\bdl(a/Cb) = \bdl(a/Cdb) < \bdl(a/Cd) = \bdl(a/C)$ 
  (using $\bdl(d/C) = \bdl(d/Cb) = 0$), as required.
  The converse follows immediately from the definition of wgp once we observe 
  that $a \ind ^{\bdl}_C d$ if $\bdl(d/C)=0$.\qedhere
  \end{enumerate}
\end{proof}

\begin{lemma} \label{l:wgpConstants}
  Given a countable set $C \subseteq \K$, let $\L^0(C)$ be the language 
  obtained from $\L$ by adding constants for the elements of $\acl^0(C)$.
  Then $\L^0(C)$ satisfies Assumptions~\ref{a:L} (without changing $\xi$).

  Furthermore, if $\tp^\L(a/C)$ is wgp, then $\tp^{\L^0(C)}(a)$ is wgp.
\end{lemma}
\begin{proof}
  Continuity of $\bdl$ easily extends from $\L$ to $\L^0(C)$ (see 
  \cite[Remark~2.7]{BB-cohMod}).
  The constants of $\L^0(C)$ form an algebraically closed subfield by 
  definition.

  We have $a \ind^{\bdl,\L^0(C)} b \Leftrightarrow  a 
  \ind_{\acl^0(C)}^{\bdl,\L} b \Leftrightarrow a \ind_C^{\bdl,\L} b$, and 
  similarly for $\ind^0$, so the ``furthermore'' clause is an immediate 
  consequence of the definition of wgp.
\end{proof}

\begin{lemma}\label{l:wgpFacts}
  Let $a,b \in \K$ be tuples, let $C \subseteq \K$ be countable,
  and suppose $\bdl(a/C),\bdl(b/C) < \infty$.
  \begin{enumerate}[(i)]
    \item \label{li:wgpImage} If $\tp(a/C)$ is wgp and $e \in \acl^0(aC)$, 
      then $\tp(e/C)$ is wgp.
    \item \label{li:wgpTrans} If $\tp(b/C)$ and $\tp(a/Cb)$ are wgp, then so 
      is $\tp(ab/C)$.
    \item \label{li:wgpIndie} If $a \ind ^{\bdl}_C b$ and $\tp(a/C)$ is wgp, 
      then $\tp(a/Cb)$ is wgp.
  \end{enumerate}
\end{lemma}
\begin{proof}
  In each case, we use Lemma~\ref{l:wgpEventually}(ii).
  \begin{enumerate}[(i)]
    \item
      Suppose $\bdl(d/C) = 0$ but $e \nind ^0_C d$. Then $a \nind ^0_C d$, 
      contradicting $\tp(a/C)$ being wgp.
    \item
      Suppose $\bdl(d/C) = 0$ but $ab \nind ^0_C d$.
      Then $a \nind ^0_{Cb} d$ or $b \nind ^0_C d$, contradicting the assumption.
    \item
      Suppose $\bdl(d/Cb) = 0$ but $a \nind ^0_{Cb} d$.
      Then $a \nind ^0_C bd$,
      so $a \nind ^{\bdl}_C bd$ by wgp,
      so $a \nind ^{\bdl}_{Cb} d$, contradicting $\bdl(d/Cb)=0$.\qedhere
  \end{enumerate}
\end{proof}

%
%
%

\section{Elekes-Szabó triangles}
\label{s:EStri}
In this section, we use the abstractions of the previous section to distil a 
potential counterexample to Theorem~\ref{t:mainOrchardFiniteIntro} down to a 
complete type satisfying certain conditions on $\bdl$, what we term a 
\emph{wgp Elekes-Szabó triangle} for collinearity. This reduces our problem to 
characterising those cubic surfaces which admit such triangles.

We will refer to one-dimensional projective subspaces of $\P^n$ as \emph{lines}, 
and to two-dimensional projective subspaces as \emph{planes}.

\begin{definition}
  Let $K$ be an algebraically closed field. For the purposes of this paper, a 
  \defn{cubic surface} $S \subseteq  \P^3(K)$ is the zero-set $S = \{ [x:y:z:w] \in 
  \P^3(K) : f(x,y,z,w) = 0 \}$ of a homogeneous degree 3 polynomial $f \in 
  K[X,Y,Z,W]$ with no repeated prime factors; in other words, $S$ is the set 
  of $K$-rational points of the reduced (but possibly reducible) projective 
  variety defined by $f$.
  (The reducedness condition that there are no repeated prime factors is 
  relevant only in the case that $f$ splits into linear factors, i.e.\ that $S$ 
  is a union of planes, where it requires that the three planes be distinct.)

  The zero-sets of the factors $g_i$ of $f$ are the irreducible components of 
  $S$. We will often require these to be smooth; equivalently (by 
  \cite[Exercise~2.1.12]{Shaf-algGeomI}),
  for each $i$, the formal derivative $g_i'$ has no common zero with $g_i$ in 
  $\P^3(K)$.
\end{definition}

Let $S \subseteq  \P^3(\K)$ be a cubic surface over $\K$.

\begin{definition}
  Let $R_S(x_1,x_2,x_3)$ be the constructible relation:
  $(x_1,x_2,x_3)$ is a collinear triple of distinct points on $S$
  not contained in a line contained in $S$.
\end{definition}

Hence $R_{S}\subseteq S^{3}$. Note that if $R_S(a_1,a_2,a_3)$, then $a_i \in \acl^0(a_j,a_k)$ (for $\{i,j,k\} 
= \{1,2,3\}$).

\begin{definition} \mbox{} \label{d:ESTri}
  \begin{itemize}\item A \defn{wgp Elekes-Szabó triangle} (or \defn{wES-triangle}) for $R_S$ 
  is
  a triple $\a = (a_1,a_2,a_3) \in R_S$ such that each $\tp(a_i)$ is wgp and 
  broad,
  and $a_i \ind ^{\bdl} a_j$ for $i \neq  j$.

  A wES-triangle $\a$ is \defn{planar} if there is a {\cstr} plane $\pi \subseteq  
  \P^3$ with $a_1,a_2,a_3 \in \pi$.

  \item We say $R_S$ has \defn{no non-planar wES-triangles}
    if, for every choice of language $\L$ and $\xi\in\N^\U$ satisfying 
    Assumptions~\ref{a:L} and such that $S$ is {\cstr}, every wES-triangle for 
    $R_S$ is planar.
  \end{itemize}
\end{definition}


Allowing the language to vary in the definition of having no non-planar 
wES-triangles corresponds to considering arbitrary finitary configurations as 
in Theorem~\ref{t:mainOrchardFiniteIntro}; see \S\ref{ss:conclFin} for this 
reduction to a finitary statement.

The following lemma will allow us to rule out some degenerate cases when 
proving planarity of wES-triangles.
\begin{lemma} \label{l:wESTriLine}
  Suppose $\a$ is a wES-triangle for $R_S$ and some $a_i$ is contained in a  
  {\cstr} line. Then $\a$ is planar.
\end{lemma}
\begin{proof}
  Say $a_1$ is in a {\cstr} line $\ell$.
  By the definition of $R_S$, the projective subspace spanned by
  $\ell$ and $a_2,a_3$ is a plane $\pi \ni a_1,a_2,a_3$.
  As $a_2 \ind ^{\bdl} a_3$, by wgp we get $a_2 \ind ^0 a_3$, hence the field of definition of $\pi$ is contained in $\acl^0(a_2) \cap 
  \acl^0(a_3) = \acl^0(\emptyset )$.
  Since $\acl^0(\emptyset )$ is named by constants by Assumptions~\ref{a:L}, 
  $\pi$ is {\cstr}.
\end{proof}

Recall that planes $P_1,\ldots ,P_n$ are said to be \defn{collinear} if they share 
a common line $\ell \subseteq  \bigcap P_i$.
This is the one exceptional case, where we do get non-planar wES-triangles:

\begin{example} \label{e:collinearPlanes}
  If $S$ is the union of three distinct collinear planes, then $R_S$ has a 
  non-planar wES-triangle (for some choice of $\L$).

  Indeed, applying a projective linear transformation (recalling that $\PGL_2$
  acts 3-transitively on $\P^1$), we may assume that $S$ is the projective
  closure of the union of the three parallel affine planes
  $P_i := (i,\K,\K) \subseteq  \K^3$ for $i=-1,0,1$.

  Then we obtain an Elekes-Szabó configuration by taking grids. Explicitly,
  for $n \in \N$ let $A_n := ([-1,1]\times(-n,n)\times(-n,n)) \cap \Z^3$, so 
  $|A_n| = 3(2n-1)^2$.
  Then the number $T$ of collinear
  triples $((-1,a,b),(0,a',b'),(1,2a'-a,2b'-b))$ in ${A_n}^3$ satisfies
  $(c n)^4 \leq  T \leq  (2n-1)^4$ for some $c>0$ independent of $n$.
  So setting $A := \prod_{n \rightarrow  \U} A_n \subseteq  S$
  and $\xi := \prod_{n\rightarrow \U} n \in \R_S^\U$,
  we have $\bdl(A) = 2$
  and $\bdl(A^3 \cap R_S) = 4$.

  Choose $\L$ such that $A$ is $\emptyset$-definable,
  and let $(a_1,a_2,a_3) \in A^3 \cap R_S$ with $a_i \in P_i$ and 
  $\bdl(a_1,a_2,a_3) = 4$. Then $\bdl(a_i) = 2$ and $a_i \ind ^{\bdl} a_j$.
  Also $\tp(a_i)(\K)$ is wgp in $P_i$ for $i=1,2,3$;
  indeed, if $C \subseteq  P_i$ is a curve, then $|A \cap C| \leq  c'\xi$ for some finite 
  $c'$ by considering the co-ordinate projections, so $\bdl(\tp(a_i)(\K) \cap 
  C) \leq  \bdl(A \cap C) \leq  1 < 2 = \bdl(a_i)$.
  It follows that $P_i = \locus^0(a_i)$ and, by Lemma~\ref{l:wgpWgp}, that $a_i$ is 
  wgp. Then $\a$ is a wES-triangle, and it is not planar, since if $\pi \ni 
  a_1,a_2,a_3$ is a {\cstr} plane then $a_i \in \P_i \cap \pi$ contradicts $P_i 
  = \locus^0(a_i)$.
\end{example}

The remainder of this paper is devoted to showing that if $S$ is not of this 
form, then it has no non-planar wES-triangles. We consider the reducible and 
irreducible cases in separate sections, then state our final conclusions in 
\S\ref{s:concl} and deduce Theorem~\ref{t:mainOrchardFiniteIntro}.

\section{The reducible case}
\label{s:reducible}
Let $S \subseteq  \P^3(\K)$ be a reducible cubic surface over $\K$ with smooth components.
Let $\L$ be as in Definition~\ref{d:ESTri}; in particular, $S$ is {\cstr}.

We separate further into two cases according to whether $S$ has two or three
irreducible components; each case is treated in its own subsection below. In
both cases, the key tool will be a result obtained in Appendix~\ref{s:cohAct},
essentially as a corollary of \cite{BGT-lin}, which yields nilpotence of
algebraic groups arising from wES-triangles.

\subsection{Non-collinear planes}
\label{s:planes}
Suppose $S$ has three irreducible components. Then since $S$ is a reduced cubic surface, $S$ is the union of three distinct planes, $P_1,P_2,P_3$. We show that if these planes are not collinear, i.e. that they do not share a common line $\ell \subseteq \bigcap_i P_i$, then every wES-triangle for $R_S$ is planar. The necessity of the collinearity condition is demonstrated by Example~\ref{e:collinearPlanes}.

This case serves as a warm-up exercise for the case considered in the next subsection, where $S$ is the union of a smooth quadric and a plane; there the argument will be similar in outline, but the geometry will be somewhat more complicated.

\begin{proposition} \label{p:mainOrchardPlanar}
	 Suppose $P_1$, $P_2$, and $P_3$ are not collinear. Then every wES-triangle is $R_S$ planar.
\end{proposition}
\begin{proof}
	The strategy of the proof is based on applying Theorem~\ref{t:cohAct} to the group generated by certain compositions of projections between two of the given planes, centred at a point of the third plane.
	
  Since {\cstr} projective linear transformations preserve collinearity of
  planes and planarity of wES-triangles, by applying such a transformation
  we may assume $P_1 = \{[0:y:z:w] : y,z,w \}$ and $P_2 = \{[x:0:z:w] : x,z,w \}$.
  
  For $q\in \mathbb{P}^3\setminus (P_1\cup P_2)$, denote by $\gamma_q:P_1\to P_2$ the function which sends $p\in P_1$ to the unique point $p'\in P_2$ such that $p$, $p'$, and $q$ are collinear.

  Let $q,q' \in \P^3 \setminus (P_1 \cup P_2)$.
  We can write $q = [a:1:c:d]$ and $q' = [1:b':c':d']$.
  Then we calculate $\gamma_{q'}^{-1} \o \gamma_{q} : P_1 \rightarrow  P_1$ as
  \begin{align*} \gamma^{-1}_{q'}(\gamma_{q}([0:y:z:w])) &= \gamma^{-1}_{q'} ([-ay:0:z-cy:w-dy])
  \\&= [0:b'ay:z+(c'a-c)y:w+(d'a-d)y] .\end{align*} 
  So $\gamma_{q'}^{-1} \o \gamma_{q}$ corresponds to the element of $\PGL_3 \cong  
  \Aut(P_1)$ with matrix $$
  \begin{bmatrix}
    b'a & 0 & 0 \\
    c'a-c & 1 & 0 \\
    d'a-d & 0 & 1
  \end{bmatrix}
  ;$$
  in other words, $\gamma_{q'}^{-1} \o \gamma_{q}$ corresponds to 
  $$((c'a-c,d'a-d),b'a) \in \G_a^2 \rtimes \G_m$$
  for the action of $\G_a^2 \rtimes \G_m$ on $P_1$ defined by
  $$((a,b),c)[0:y:z:w] := [0:cy:z+ay:w+by].$$
  We identify $\gamma^{-1}_{q'}\o\gamma_{q}$ with this element of $\G_a^2\rtimes \G_m$.

  Let $(p_1,p_2,q)\in S^3$ be a wES-triangle for the collinearity relation
  $R_S\subseteq P_1\times P_2\times P_3$. By the definition of $R_S$,
  $(p_1,p_2,q)$ span a line which is not contained in $S$, so no two elements
  of the triple are in the same plane $P_i$, so we may assume $p_1 \in P_1$,
  $p_2 \in P_2$, and $q \in P_3 \setminus \{P_1 \cup P_2\}$.

  Take $(p'_1,q')\equiv_{p_2}(p_1,q)$ with $p'_1,q'\ind ^{\bdl}_{p_2} p_1,q$. Let $g:=\gamma^{-1}_{q'}\circ\gamma_{q}\in \G_a^2 \rtimes \G_m$, so $g(p_1)=p_1'$ (where the action of $\G_a^2 \rtimes \G_m$ on $P_1$ is that defined above). We will now verify the conditions of Theorem~\ref{t:cohAct} for $g \in \G_a^2 \rtimes \G_m$ and $p_1 \in P_1$.

  Since $q'\ind^{\bdl} p_2$, we get $q'\ind^{\bdl} p_1,q$ by transitivity, 
  hence $q'\ind^{\bdl}_{q}p_1$ by monotonicity, and then since 
  $q\ind^{\bdl}p_1$ we obtain $q,q'\ind^{\bdl} p_1$ and hence $g\ind^{\bdl} 
  p_1$. Similarly, $g\ind^{\bdl} p_1'$. Note that $\tp(g)$ is wgp: indeed, 
  since $q'\ind^{\bdl} q$, and $\tp(q')$ is wgp, and $g\in\acl^0(q,q')$, by 
  Lemma~\ref{l:wgpFacts} we see in turn that each of $\tp(q'/q)$, $\tp(q',q)$, 
  and $\tp(g)$ is wgp.
 
 
  Let $Y:=\locus^0(g)\subseteq \G_a^2 \rtimes \G_m$ and $Z:=\locus^0(p_1)\subseteq P_1$. Let $H:=\langle YY^{-1}\rangle \leq  \G_a^2 \rtimes \G_m$. Note that for any $p=[0:y:z:w]\in P_1$ and any $h=((a,b),c)\in \G_a^2 \rtimes \G_m$, we have $h(p)=[0:cy:z+ay:w+by]=p$ if and only if $ay=(c-1)z$ and $by=(c-1)w$. If $c\neq 1$, then $h$ only fixes the projective point $[0:c-1:a:b]$; then since $Z$ is infinite, $h$ cannot fix $Z$ pointwise. If $c=1$, then $h$ fixes the line $L_0=\{[0:0:z':w']:z',w'\}=P_1\cap P_2$, which is {\cstr}. Suppose a non-trivial element $h\in H$ fixes $Z$ pointwise. Then by the argument above, we must have $h=((a,b),1)$ for some $a,b$ and $Z=L_0$. Then $p_1$ is in a {\cstr} line, hence $(p_1,p_2,q)$ is planar by Lemma~\ref{l:wESTriLine}, and we are done. Therefore, we may assume no non-trivial element $h\in H$ fixes $Z$ pointwise. We may now apply Theorem \ref{t:cohAct} and conclude that $H \leq \G_a^2 \rtimes \G_m$ is a nilpotent algebraic subgroup.

  \begin{claim*}
    $H$ is of one of the following forms:
    \begin{enumerate}[(a)]\item $(V,1)$ for a vector subgroup $V \leq  \G_a^2$;
    \item $\{((\lambda-1)v, \lambda) : \lambda \in \G_m \}$ for some $v \in 
    \G_a^2$.
    \end{enumerate}
  \end{claim*}
  \begin{proof}
    We use only that $H$ is a connected nilpotent algebraic subgroup of 
    $\G_a^2 \rtimes \G_m$.
    Since $\G_a^2 \rtimes \G_m$ is solvable non-nilpotent and connected,
    $\dim(H) \leq  2$, so $H$ is abelian.

    Suppose $H$ is not of the form in (a).
    Then the projection $\pi_2(H) \leq  \G_m$ must be the whole of $\G_m$ since 
    $H$ is connected,
    so say $(v,2) \in H$.
    Then if $(w,\lambda) \in H$,
    then $(v,2)\cdot(w,\lambda) = (\lambda v+w,2\lambda) = (2w+v,\lambda2) = 
    (w,\lambda)\cdot(v,2)$,
    so $w=(\lambda-1)v$.
  \end{proof}

  Let $C:=\locus^0(q)\subseteq P_3$.
  Say $q=[a:1:c:d]$ and $q'=[1:b':c':d']$. 

  If $H \leq  (\G_a^2,1)$, then $g\in (\G_a^2,1)$,
  so $b'a = 1$.
  Then $q = [1:b':cb':db']$,
  so $q$ is contained
  in the plane $\pi$ spanned by $q'$ and $P_1 \cap P_2 = \{[0:0:z:w] : z,w\}$. Note that $q\in\pi$, $q'\ind^0 q$, and $\pi$ is $q'$-{\cstr}. Therefore, $C\subseteq \pi$, which is collinear with $P_1$ and $P_2$. By assumption, $P_3$ is not collinear with $P_1$ and $P_2$. Thus, $C\subseteq \pi\cap P_3$ is a line. By Lemma \ref{l:wESTriLine}, $(p_1,p_2,q)$ is planar.

  It remains to consider case (b) of the Claim.
  So say $H = \{((\lambda-1)v, \lambda) : \lambda \in \G_m \}$
  where $v=(v_1,v_2) \in \G_a^2$.
  Now $g = ((c'a-c,d'a-d),b'a) \in H$, so $(c'a-c,d'a-d) = (b'a-1)v$,
  so $(a,1,c,d) = a(1,0,c'-b'v_1,d'-b'v_2) + (0,1,v_1,v_2)$.
  So $q$ is on the line $L$ spanned by $[1:0:c'-b'v_1:d'-b'v_2]$ and $[0:1:v_1:v_2]$.
  But $\{v\}$ is {\cstr}: indeed, $Y$ and hence $H$ is {\cstr}, and $\{(v,2)\}$ 
  is the fibre over $2 \in \G_m$ of the projection.
  Hence $L$ is $q'$-{\cstr}.
  Since $q\ind^0 q'$ (since $q\ind^{\bdl} q'$ and 
  $\tp(q)$ is wgp), we get $C\subseteq L$. As $\dim^0(C)\geq 1$, we must have 
  $C=L$, and so $C$ is a line. By Lemma~\ref{l:wESTriLine} again, 
  $(p_1,p_2,q)$ is planar.
  \qedhere


\end{proof}

\subsection{Quadric surface}
\label{s:quadric}
Suppose that the cubic surface $S \subseteq  \P^3$ has two smooth irreducible 
components. Since $S$ is cubic, one component is a plane $P$, and the other is 
a smooth quadric surface $Q$.

We will prove (in Corollary~\ref{c:mainOrchardQuadric} below) that every
wES-triangle for $R_S$ is planar. For this purpose, we are again free to transform
$S$ by {\cstr} projective linear transformations of $\P^3$, since they preserve
collinearity and planarity.

By the principal axis theorem, see e.g.\ \cite[IV.1.4, Theorem 1]{serre},
after applying a projective linear transformation
we may assume $Q$ has projective equation $\sum_{i<4}\lambda_ix_i^2=0$ for 
some $\lambda_i \in \K$.
Since $Q$ is smooth we must have $\lambda_i \neq  0$ for all $i$,
so by a further projective linear transformation we may assume
$$Q = \{ [x_0:x_1:x_2:x_3] : \sum_{i<4} x_i^2=0 \} = \{ [\x] : \x\cdot\x=0 
\},$$
where we denote by $\cdot$ the scalar product on $\K^4$ defined by $\x\cdot\y=\sum_{i<4} x_iy_i$.

(It would be tempting at this point to apply a further projective linear 
transformation to send $P$ to the plane $x_3=0$ ``at infinity'', and work in 
affine co-ordinates $(x_0,x_1,x_2)$ with $Q$ as the ``sphere'' $\sum_{i<3} 
x_i^2=1$. Some of the constructions below become more transparent with this 
setup, and the reader may want to consider them in these terms. However, this 
is possible only in the case that, writing $P = \{ [\x] : \d\cdot\x = 0 \}$, 
we have $\d\cdot\d \neq  0$, so to handle the case where $\d\cdot\d \neq  0$ and the case where $\d\cdot\d =0$ uniformly we will continue 
to work projectively.)

Now for $x \in \P^3 \setminus Q$, denote by $\gamma_x : Q \rightarrow  Q$ the involution 
defined by collinearity with $x$; more precisely, the map such that for any $y 
\in Q$, the divisor of the intersection of $Q$ with the line through 
$x$ and $y$ is precisely $y+\gamma_x(y)$.


For $x \in \P^3$, let $\widetilde x$ be its lift to a one-dimensional subspace of 
$\K^4$,
namely $\widetilde x = \{ \x : [\x] = x \} \cup \{0\}$,
and for $A \subseteq  \P^3$ let $\widetilde A := \bigcup_{a \in A} \widetilde a \subseteq  \K^4$.
Denote the orthogonal space to a subspace $V \leq  \K^4$ by
$V^\perp = \{ w : \forall v \in V.\; w\cdot v = 0 \}$.
We use the same notation for a projective subspace $P \subseteq  \P^3$,
i.e.\ $P^\perp$ is the projective subspace with $\widetilde {P^\perp} = (\widetilde P)^\perp$.

\begin{lemma} \label{l:geiserRefl}
  Let $x \in \P^3 \setminus Q$.
  Let $\widetilde \gamma_x$ be the element of $O_4 \setminus \SO_4 \subseteq  \GL_4$ which is the 
  identity on $\widetilde x^\perp$ and acts as $-1$ on $\widetilde x$.
  Then $\gamma_x$ is the restriction to $Q$ of the image in $\PO_4 \leq  \PGL_4 
  \cong \Aut(\P^3)$ of $\widetilde \gamma_x$ under projectivisation.
\end{lemma}
\begin{proof}
  These conditions do uniquely define an element $\widetilde \gamma_x \in \GL_4$, since 
  $\widetilde x^\perp \cup \widetilde x$ spans $\K^4$, since $x \notin Q$. The matrix with respect 
  to a corresponding orthonormal basis is then $\operatorname{diag}(-1,1,1,1)$, so 
  $\widetilde \gamma_x \in O_4 \setminus \SO_4$.
  In particular, $\widetilde \gamma_x$ preserves $\cdot$, so its image $\gamma'_x$ in 
  $\PGL_4$ does restrict to a map $Q \rightarrow  Q$.

  Let $y = [\y] \in Q$. It remains to check that $y,x,\gamma'_x(y)$ are 
  collinear.
  But indeed, $\widetilde \gamma_x(\y) \in \y + \widetilde x$, so $\widetilde y,\widetilde x,\widetilde {\gamma'_x(y)}$ are 
  linearly dependent.
\end{proof}

Compositions of pairs of these involutions therefore generate a subgroup of 
$\PSO_4$. We aim to apply Theorem~\ref{t:cohAct}, and so we are interested in 
which such subgroups are nilpotent, as well as which subvarieties of $Q$ have 
non-trivial pointwise stabilisers.
The following fact clarifies these matters.

\begin{fact} \label{f:PSO4}
  \begin{enumerate}[(i)]  \item
    Any connected nilpotent algebraic subgroup of $\PSO_4$ is abelian.
  \item
    For any $\sigma \in \PSO_4 \setminus \{1\}$, the fixed set $\Fix(\sigma) \subseteq  Q$ is 
    a finite union of points and lines.
  \end{enumerate}
\end{fact}
\begin{proof}
    As explained in e.g.\ \cite[\S 18.2]{fultonHarris}, the action of $\PSO_4$
    on $Q$ can be understood as follows.
    We have $Q \cong \P^1\times\P^1$ (in fact, $Q$ is the image of the Segre embedding of 
    $\P^1\times\P^1$ into $\P^3$; see \cite[Exercise~I.2.15]{Hart}),
    and $\PSO_4 \cong  \PGL_2 \times \PGL_2$. Fix some such isomorphisms and treat them as identities.
    Then the action of $\PSO_4$ on $Q$ respects these product decompositions,
    i.e.\ $(\alpha,\beta)(x,y) = (\alpha(x),\beta(y))$, and each subvariety
    $\{x\} \times \P^1 \subseteq \P^3$ and $\P^1 \times \{y\} \subseteq \P^3$
    is a line. The stated results can be deduced as follows:

  \begin{enumerate}[(i)]
  \item
    If $N \leq \PSO_4$ is connected nilpotent, then so are its images in 
    $\PGL_2$ under the projection maps, so it suffices to verify that any 
    nilpotent connected algebraic subgroup of $\PGL_2$ is abelian. This 
    follows for example from the easily verified fact that any nilpotent Lie 
    algebra of dimension $\leq 2$ is abelian, and the simplicity of $\PGL_2$.

  \item Say $\sigma = (\alpha,\beta)$.
    Then $\Fix(\sigma) = \Fix(\alpha) \times \Fix(\beta)$.
    But $\Fix(\alpha) \subseteq  \P^1$ is either finite or the whole of $\P^1$,
    and the result follows.\qedhere
  \end{enumerate}
\end{proof}

\begin{lemma} \label{l:abColl}
  Let $a,b,c \in \P^3 \setminus Q$,
  suppose $a \notin \{b\} \cup b^\perp$,
  and suppose $\gamma_c\gamma_a$ commutes with $\gamma_b\gamma_c$.
  Then $c \in \ell \cup \ell^\perp$, where $\ell$ is the line spanned by $a,b$.
\end{lemma}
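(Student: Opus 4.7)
The plan is to recast the hypothesis as an involution condition on $\tau := \gamma_c\gamma_a\gamma_b$, and then exploit the rigid structure of involutions in $O_4$ together with an explicit computation of $\Fix(\widetilde g)$ where $g := \gamma_a\gamma_b$.

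Using $\gamma_c^2=1$, the commutation $\gamma_c\gamma_a \cdot \gamma_b\gamma_c = \gamma_b\gamma_c \cdot \gamma_c\gamma_a$ rearranges to $\gamma_c g \gamma_c = g^{-1}$, equivalently to $\tau$ being an involution. Lifting to $O_4$ via Lemma~\ref{l:geiserRefl}, a determinant count gives $\det\widetilde\tau = -1$ and $\widetilde\tau^2 = I$, so the $(-1)$-eigenspace of $\widetilde\tau$ has odd dimension $1$ or $3$; these two cases will correspond respectively to $c\in\ell$ and $c\in\ell^\perp$. In preparation I compute $\Fix(\widetilde g)$: it contains $\widetilde\ell^\perp$ (where $\widetilde\ell := \widetilde a + \widetilde b$), and working in the basis $\{v_a,v_b\}$ of $\widetilde\ell$, a direct $2\times 2$ matrix computation shows $\widetilde g|_{\widetilde\ell}$ has determinant $1$ and trace $\mu\nu - 2$ with $\mu\nu = 4(v_a\cdot v_b)^2/((v_a\cdot v_a)(v_b\cdot v_b))$. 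The hypotheses $a,b\notin Q$ and $a\notin b^\perp$ make $\mu,\nu$ non-zero and ensure $\widetilde g|_{\widetilde\ell}\neq 1$; in the generic case where the restricted form on $\widetilde\ell$ is non-degenerate (equivalent to $\mu\nu\neq 4$), $\widetilde g|_{\widetilde\ell}$ has no eigenvalue $1$, so $\Fix(\widetilde g) = \widetilde\ell^\perp$, of dimension $2$.

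If $\dim\ker(\widetilde\tau+1)=1$, then $\widetilde\tau = \widetilde\gamma_e$ for some $e\in\P^3\setminus Q$, so $\widetilde g = \widetilde\gamma_c\widetilde\gamma_e$; since $c=e$ would force $\widetilde g = I$ and hence $a=b$, we have $c\neq e$ and the fixed space equals $(\widetilde c + \widetilde e)^\perp$. Matching with $\widetilde\ell^\perp$ yields $\widetilde c + \widetilde e = \widetilde\ell$, so $c\in\ell$. If $\dim\ker(\widetilde\tau+1)=3$, then $\widetilde\tau = -\widetilde\gamma_f$ and $\widetilde g = -\widetilde\gamma_c\widetilde\gamma_f$; the fixed space of the latter is the $(-1)$-eigenspace of $\widetilde\gamma_c\widetilde\gamma_f$, which is non-zero only when this element restricts to $-I$ on $\widetilde c + \widetilde f$ (in which case it equals $\widetilde c + \widetilde f$), so matching with $\widetilde\ell^\perp$ yields $c\in\ell^\perp$.

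The main obstacle I anticipate is the degenerate case $\mu\nu = 4$ (geometrically: the line $\ell$ is tangent to $Q$), where $\Fix(\widetilde g)$ has dimension $3$ rather than $2$, picking up an additional null direction inside $\widetilde\ell$; the fixed-space matching above then requires more care, since both sides may jump in dimension. I expect a parallel but slightly more delicate computation to preserve the same dichotomy $c\in\ell\cup\ell^\perp$.
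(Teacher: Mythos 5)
Your approach is genuinely different from the paper's. The paper does not pass to the involution $\tau$; instead it shows directly that the scalar appearing in the lift of the commutation relation must be $1$ (ruling out $-1$ by appealing to the hypothesis $a\notin b^\perp$), then deduces that $\widetilde\gamma_c$ preserves $\widetilde\ell^\perp$ and hence $\ell$, and concludes by a short direct computation. Your route via the $\pm1$-eigenspace dichotomy of $\widetilde\tau$ is a legitimate alternative, and the reduction to ``match fixed spaces with $\widetilde\ell^\perp$'' is a nice idea. Two points, though, need attention. First, the commutation hypothesis is projective, so lifting to $O_4$ only gives $\widetilde\gamma_c\widetilde g\widetilde\gamma_c=\pm\widetilde g^{-1}$, i.e.\ $\widetilde\tau^2=\pm I$; this is not yet $\widetilde\tau^2=I$. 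Your ``determinant count'' can be completed, but it is more than a determinant count: one must use that any $T\in O_4$ has the same characteristic polynomial as $T^{-1}$, so if $\widetilde\tau^2=-I$ then $\widetilde\tau$ and $\widetilde\tau^{-1}=-\widetilde\tau$ share a characteristic polynomial, forcing eigenvalues to come in $\pm$-pairs and hence $\det\widetilde\tau=1$, contradicting $\det\widetilde\tau=-1$. Spell this out.

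The second (and substantive) gap is the degenerate case $\mu\nu=4$, which you explicitly leave unresolved; moreover your diagnosis of what goes wrong there is mistaken. It is not true that $\Fix(\widetilde g)$ jumps to dimension $3$: one has $\Fix(\widetilde g)=\widetilde\ell^\perp$ \emph{unconditionally} as long as $a\neq b$. Indeed, if $\widetilde g\beta=\beta$ then $\widetilde\gamma_a\beta=\widetilde\gamma_b\beta$, so $\beta-\widetilde\gamma_a\beta=\beta-\widetilde\gamma_b\beta\in\widetilde a\cap\widetilde b=0$, whence $\beta\in\widetilde a^\perp\cap\widetilde b^\perp=\widetilde\ell^\perp$ (this is precisely the Claim inside the paper's proof). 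Your $2\times2$ computation only computes $\Fix(\widetilde g)\cap\widetilde\ell$; the ``extra'' fixed vector $\mu v_a-2v_b$ it produces when $\mu\nu=4$ actually lies in the radical $\widetilde\ell\cap\widetilde\ell^\perp$, so contributes nothing new, and since in that case $\widetilde\ell+\widetilde\ell^\perp\neq\K^4$ your computation says nothing at all about the remaining direction. Once you know $\Fix(\widetilde g)=\widetilde\ell^\perp$ always, the dichotomy closes cleanly in both halves: for $\dim E_{-1}(\widetilde\tau)=1$, $\ker(\widetilde\tau+1)=\widetilde e$ is nonisotropic (perpendicular to the $3$-dimensional complement), $c\neq e$ since otherwise $\widetilde g=I$ contradicts $\Fix(\widetilde g)$ being only $2$-dimensional, and $\widetilde\ell^\perp=\langle\widetilde c,\widetilde e\rangle^\perp$ gives $c\in\ell$; for $\dim E_{-1}(\widetilde\tau)=3$, $c\neq f$ since otherwise $\widetilde g=-I$ has no fixed vectors, and $\widetilde\ell^\perp=\Fix(\widetilde g)=E_{-1}(\widetilde\gamma_c\widetilde\gamma_f)\subseteq\langle\widetilde c,\widetilde f\rangle$ forces equality of these $2$-dimensional spaces, so $c\in\ell^\perp$. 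So your plan does work, but as written it is incomplete, and the reason you thought the tangent case was delicate is a miscount of $\Fix(\widetilde g)$.
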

\begin{proof}
  Let $\widetilde \gamma_a,\widetilde \gamma_b,\widetilde \gamma_c$ be as in Lemma~\ref{l:geiserRefl}.
  We use the notation $E_\lambda(T)$ for the $\lambda$-eigenspace of $T$.

  \begin{claim} \label{c:fixedLine}
    $E_1(\widetilde \gamma_a\widetilde \gamma_b) = E_1(\widetilde\gamma_b\widetilde\gamma_a) = \widetilde {\ell^\perp} = \left<{\widetilde a,\widetilde b}\right>^\perp \leq  
    \K^4$.
  \end{claim}
  \begin{proof}
    By symmetry, it suffices to show $E_1(\widetilde\gamma_a\widetilde\gamma_b) = \left<{\widetilde a,\widetilde b}\right>$.
    By Lemma~\ref{l:geiserRefl}, $\left<{\widetilde a,\widetilde b}\right>^\perp$ is contained in the fixed 
    sets of both $\widetilde \gamma_a$ and $\widetilde \gamma_b$, and hence of the composition;
    for the converse, suppose $\widetilde \gamma_a\widetilde \gamma_b(\beta) = \beta$.
    Then $\widetilde \gamma_a(\beta) = \widetilde \gamma_b(\beta)$,
    so $\beta-\widetilde \gamma_a(\beta) = \beta-\widetilde \gamma_b(\beta)$ is in the 
    intersection of the $(-1)$-eigenspaces of $\widetilde \gamma_a$ and $\widetilde \gamma_b$, 
    which is trivial since $a \neq  b$,
    so $\widetilde \gamma_a(\beta) = \beta = \widetilde \gamma_b(\beta)$,
    so $\beta \in \widetilde a^\perp \cap \widetilde b^\perp = \left<{\widetilde a,\widetilde b}\right>^\perp$.
    \subqed{c:fixedLine}
  \end{proof}

  Now $\gamma_c\gamma_a\gamma_b\gamma_c = \gamma_b\gamma_c\gamma_c\gamma_a = 
  \gamma_b\gamma_a$, so $\gamma_b\gamma_a\gamma_c = \gamma_c\gamma_a\gamma_b$,
  and so $\widetilde \gamma_b\widetilde \gamma_a\widetilde \gamma_c = 
  \lambda\widetilde \gamma_c\widetilde \gamma_a\widetilde \gamma_b$ for some scalar $\lambda \in \K$.

  If $x \in \widetilde {\ell^{\perp}}$,
  then $\widetilde \gamma_b\widetilde \gamma_a\widetilde \gamma_c(x) = 
  \lambda\widetilde \gamma_c\widetilde \gamma_a\widetilde \gamma_b(x) = \lambda\widetilde \gamma_c(x)$,
  so $\widetilde \gamma_c(x) \in E_\lambda(\widetilde \gamma_b\widetilde \gamma_a)$.
  So $\dim(E_\lambda(\widetilde \gamma_b\widetilde \gamma_a)) \geq  2$.

  Suppose $\lambda \neq  1$. Now $E_1(\widetilde \gamma_b\widetilde \gamma_a)$ is the 2-dimensional 
  space $\widetilde {\ell^\perp}$ by the claim,
  so $\widetilde \gamma_b\widetilde \gamma_a \in \SO_4$ has a basis of eigenvectors with 
  eigenvalues $(\lambda,\lambda,1,1)$, so $\lambda^2 = 1$ and hence $\lambda = 
  -1$.

  Then $E_{-1}(\widetilde \gamma_b\widetilde \gamma_a) = \widetilde \ell$, since it is orthogonal to 
  $E_1(\widetilde \gamma_b\widetilde \gamma_a)$. So in particular, if $a' \in \widetilde a \setminus \{0\}$, then 
  $-a' = \widetilde \gamma_b\widetilde \gamma_a(a') = \widetilde \gamma_b(-a')$, so $-a' \in (\widetilde b)^\perp$, 
  contradicting the assumption $a \notin b^\perp$.

  So $\lambda = 1$, therefore
  $\widetilde\gamma_c(E_1(\widetilde\gamma_a\widetilde\gamma_b))
  = E_1(\widetilde\gamma_b\widetilde\gamma_a)$,
  so by the claim
  $\widetilde \gamma_c$ preserves $\widetilde{\ell^\perp}$,
  and hence $\gamma_c$ preserves $\ell^\perp$.
  Since $\widetilde \gamma_c \in O_4$, also $\gamma_c$ preserves $ \ell = 
  (\ell^\perp)^\perp$.

  Now suppose $c \notin \ell^\perp$, so say $x \in \widetilde \ell \setminus \widetilde c^{\perp}$.
  Then $\widetilde \gamma_c(x) - x \in \widetilde c \setminus \{0\}$ by Lemma~\ref{l:geiserRefl},
  but $\widetilde \gamma_c(x) \in \widetilde \ell$ as $\gamma_c$ preserves $\ell$,
  so $\widetilde c \cap \widetilde \ell \neq  \{0\}$,
  so $c \in \ell$, as required.
\end{proof}

\begin{theorem} \label{t:quadricLines}
  Suppose $a \in Q$, $c \in \P^3 \setminus Q$,
  $\tp(a)$ and $\tp(c)$ are wgp,
  and $c \ind ^{\bdl} a$ and $c \ind ^{\bdl} \gamma_c(a)$.

  Then either $\locus^0(a)$ or $\locus^0(c)$ is contained in a line.
\end{theorem}
\begin{proof}
  Suppose not.

  Let $Z := \locus^0(a) \subseteq  Q$.
  Let $\theta : \P^3 \setminus Q \rightarrow  \PO_4$ be the ({\cstr}) embedding 
  given by Lemma~\ref{l:geiserRefl}, so $\theta(c)a = \gamma_c(a)$,
  and let $Y := \locus^0(\theta(c)) \subseteq  \PO_4$.
  Note that (by considering determinants) $Y^{-1}Y = Y^2 \subseteq  \PSO_4$.

  By Fact~\ref{f:PSO4}(ii),
  since $Z$ is not contained in a line,
  no non-trivial element of $\PSO_4$ fixes $Z$ pointwise.

  By Theorem~\ref{t:cohAct}, the subgroup $H$ of $\PSO_4$ generated by $Y^{-1}Y = 
  Y^2$ is nilpotent, and hence abelian by Fact~\ref{f:PSO4}(i).

  Let $(c_1,c_2) \in \locus^0(c)^2$ be generic.
  If $c_1 \in (c_2)^\perp$, then this closed condition holds for all $(x,y) \in
  \locus^0(c)^2$, which implies that $\locus^0(c) \subseteq  Q$, contradicting $c \notin 
  Q$.

  Let $\ell$ be the line spanned by $c_1$ and $c_2$.
  Then by Lemma~\ref{l:abColl},
  $\locus^0(c) \setminus Q \subseteq  \ell \cup \ell^{\perp}$; but $\locus^0(c)$ is 
  irreducible, so it must be contained in a line, contrary to assumption.
\end{proof}

\begin{corollary} \label{c:mainOrchardQuadric}
  Every wES-triangle for $R_S$ is planar.
\end{corollary}
\begin{proof}
  Let $(a_1,a_2,a_3) \in R_S$ be a wES-triangle.

  Permuting, we may assume that $a_3 \in P \setminus Q$ and $a_1,a_2 \in Q$, so $a_2 = 
  \gamma_{a_3}(a_1)$. Then by Theorem~\ref{t:quadricLines}, $\locus^0(a_1)$ or 
  $\locus^0(a_3)$ is contained in a line, and we conclude by 
  Lemma~\ref{l:wESTriLine}.
\end{proof}

\section{The irreducible case}
\label{s:irreducible}
Let $S \subseteq  \P^3(\K)$ be an irreducible smooth cubic surface over $\K$.
Let $\L$ be as in Definition~\ref{d:ESTri}; in particular, $S$ is {\cstr}.

\subsection{Lines in $S$}
\label{ss:lines}
\begin{definition} \label{d:good}
  $x \in S$ is \defn{good} if $x$ is not contained in any line contained in 
  $S$.
\end{definition}

It will be convenient to work with a modified version of $R_S$ which allows 
only good points:

\begin{definition} \label{d:RPrime}
  Let $R_S'(x,y,z)$ be the ({\cstr}) relation:
  $(x,y,z)$ is a collinear triple of distinct good points on $S$.
\end{definition}

\begin{fact}[{\cite[Theorem~V.4.9]{Hart}}] \label{f:lines}
  $S$ contains only finitely many (namely 27) lines.
  In particular, the set of good points is Zariski open, and so a generic 
  point of $S$ is good.
\end{fact}

Let $\Pi$ be the (Grassmannian) variety of projective planes in $\P^3$.

\subsection{Coplanarity of energy}
Let $E_S$ be the {\cstr} set
\begin{align*} E_S := \{ ((x_1,x_2),(y_1,y_2)) \in S^2 \times S^2 &: 
\text{$x_1,x_2,y_1,y_2$ are not collinear} \\
& \wedge \exists z \in S. \bigwedge_{i=1,2} R_S'(x_i,y_i,z) \} .\end{align*}

We show in Corollary~\ref{c:fixedCurves} a result on planarity of curves of
fixed points of compositions of four sufficiently generic Geiser involutions,
which we now interpret as showing that any sufficiently generic $K_{2,s}$ for
$E_S$ is almost entirely coplanar.

\begin{theorem} \label{t:coplanarK2s}
  There exists $m \in \N$ (depending only on $S$) such that
  if $a,b,c,d \in S$ and $B \subseteq  S^2$
  and $\{(a,b),(c,d)\} \times B \subseteq  E_S$
  and $\trd^0(a,b) = 4 = \trd^0(c,d)$
  and $\trd^0(a,b,c,d) > 4$,
  and $|B| \geq m$,
  then there is $(e,f) \in B$ such that $a,b,c,d,e,f$ are coplanar.
\end{theorem}
\begin{proof}
  Let $F_{a,b,c,d} := \{e \in S : \exists f \in S.\; ((a,b),(e,f)),((c,d),(e,f)) \in E_S \}$.
  Then by the definition of $E_S$, we have
  $$F_{a,b,c,d} \subseteq \{ e \in S : \exists z_1,f,z_2\in S.\; 
  (e,a,z_1),(z_1,b,f),(f,d,z_2),(z_2,c,e) \in R_S'\}.$$

\begin{center}
\begin{tikzpicture}[scale=0.4]
\draw[gray, thick] (-5,1) -- (1,-5);
\draw[gray, thick] (-1,-5) -- (5,1);
\draw[gray, thick] (5,-1) -- (-1,5);
\draw[gray, thick] (1,5) -- (-5,-1);
\filldraw[black] (-4,0) circle (1.5pt) node[anchor=east]{$e$};
\filldraw[black] (4,0) circle (1.5pt) node[anchor=west]{$f$};
\filldraw[black] (0,-4) circle (1.5pt) node[anchor=north]{$z_1$};
\filldraw[black] (0,4) circle (1.5pt) node[anchor=south]{$z_2$};
\filldraw[black] (-2,-2) circle (1.5pt) node[anchor=north east]{$a$};
\filldraw[black] (2,-2) circle (1.5pt) node[anchor=north west]{$b$};
\filldraw[black] (-2,2) circle (1.5pt) node[anchor=south east]{$c$};
\filldraw[black] (2,2) circle (1.5pt) node[anchor=south west]{$d$};
\end{tikzpicture}
\end{center}
  
  This precisely means that $F_{a,b,c,d}$ is contained in the set of strongly
  $(a,b,c,d)$-fixed points as defined in Definition~\ref{d:stronglyFixed}.
  Corollary~\ref{c:fixedCurves} then states that if $F=F_{a,b,c,d}$ is
  infinite and $a,b,c,d$ satisfy the genericity conditions in the statement,
  then $a,b,c,d$ span a plane $\pi \in \Pi$ and $F\setminus\pi$ is finite.

  By the existence of uniform bounds on cardinalities of finite fibres of a
  constructible set (known to model theorists as elimination of
  $\exists^{\infty}$ in the strongly minimal theory $\operatorname{ACF}_0$),
  there is $m$ such that for any $a,b,c,d \in S$,
  $F=F_{a,b,c,d}$ is infinite as soon as $|F| \geq m$,
  and if furthermore $a,b,c,d$ span a plane $\pi$ and $|F \setminus \pi|$ is finite
  (in particular, if the genericity conditions are satisfied),
  then $|F \setminus \pi| < m$.

  This $m$ is then as required: if $a,b,c,d$ and $|B| \geq m$ are as in the statement,
  then $F=F_{a,b,c,d}$ is infinite since $|F| = |B| \geq m$,
  so $a,b,c,d$ span a plane $\pi$
  and $|B| > |F \setminus\pi|$,
  so there is $(e,f) \in B$ with $e \in F \cap \pi$,
  and then by the collinearities, also $f \in \pi$.
\end{proof}


\subsection{Planarity of Elekes-Szabó triangles}
\label{s:irredPlanarTri}
\begin{lemma} \label{l:chromatic}
  Let $\Gamma$ be an internal set, and for $i \in \omega$ let $R_i \subseteq  
  \Gamma^2$ be an internal symmetric binary relation with finite maximum 
  degree $\leq d_i$, meaning that the neighbourhoods of points have uniformly 
  bounded finite size $|R_ix| \leq  d_i \in \N$.

  Then there exists an internal anticlique $\Gamma' \subseteq  \Gamma$ for the graph 
  $(\Gamma,\bigvee_i R_i)$ with $\bdl(\Gamma') = \bdl(\Gamma)$.

  Moreover, if $\pi : H \rightarrow  \Gamma$ is an internal map, then there is an 
  internal anticlique $\Gamma' \subseteq  \Gamma$ with $\bdl(\pi^{-1}(\Gamma')) = 
  \bdl(H)$.
\end{lemma}
\begin{proof}
  We show the ``moreover'' clause, since the other statement follows by taking 
  $\pi = \operatorname{id}$.

  Say $\Gamma = \prod_{s \rightarrow  \U} \Gamma_s$ and $R_i = \prod_{s \rightarrow  \U} 
  R_{i,s}$.

  We recursively define a descending chain of internal subsets $A_k \subseteq  
  \Gamma$ such that $A_k$ is an $R_i$-anticlique for all $i < k$ and
  $\bdl(\pi^{-1}(A_k)) = \bdl(H)$.
  Let $A_0 := \Gamma$.
  Given $A_k$, say
  $A_k = \prod_{s \rightarrow  \U} A_{k,s}$;
  then for $\U$-many $s$,
  $(A_{k,s},R_{k,s})$ is a graph with maximal degree $\leq d_k$ and hence (by 
  a standard easy argument) chromatic number at most $d_k+1$.
  Thus we can write $A_{k,s}=\bigcup_{j=0}^{d_k} E_i$ for some disjoint $R_{i+1}$-anticliques $E_0,\dots,E_{d_k}$, so   $\pi^{-1}(A_{k,s})=\bigcup_{j=0}^{d_k} \pi^{-1}(E_i)$. 
  Hence for some $i_0$ the anticlique $A_{k+1,s}:=E_{i_0}\subseteq A_{k,s}$ satisfies $|\pi^{-1}(A_{k+1,s})| \geq  
  \frac{|\pi^{-1}(A_{k,s})|}{d_k+1}$.
  Then $A_{k+1} := \prod_{s \rightarrow  \U} A_{k+1,s}$ is an internal $R_k$-anticlique 
  with $\bdl(\pi^{-1}(A_{k+1})) = \bdl(\pi^{-1}(A_k)) = \bdl(H)$, as required.

  Now by Lemma~\ref{l:intInType}, there is an internal subset
  $A \subseteq  \bigwedge_{k \in \omega} \pi^{-1}(A_k)$ with $\bdl(A) = \bigwedge_{k \in \omega} 
  \pi^{-1}(A_k) = \bdl(H)$,
  and then $\Gamma' := \pi(A)$ is as required.
\end{proof}

\begin{theorem} \label{t:mainOrchardIrred}
  Every wES-triangle for $R_S$ is planar.
\end{theorem}
\begin{proof}
  Let $(a_1,a_2,a_3) \in R_S$ be a wES-triangle.

  If some $a_i$ is not good, then by Fact~\ref{f:lines} it is contained in a 
  {\cstr} line, and we conclude by Lemma~\ref{l:wESTriLine}. So we may 
  assume that $\a \in R_S'$.

  For $\{i,j,k\} = \{1,2,3\}$ we have $\bdl(a_i) = \bdl(a_i/a_j) = 
  \bdl(a_k/a_j) = \bdl(a_k)$ and $0 < \bdl(a_i) < \infty$,
  so for notational convenience we assume (by changing the scaling parameter 
  $\xi$) that $\bdl(a_i)=1$ for $i=1,2,3$.

  \underline{\bf Step 1}:
  Suppose for a contradiction that $\trd^0(a_i) = 2$ for some $i \in \{1,2,3\}$.
  Permuting, we may assume $\trd^0(a_1) = 2$.

  Let $a_1'a_2' \equiv _{a_3} a_1a_2$ with $a_1'a_2' \ind ^{\bdl}_{a_3} a_1a_2$,
  and let $e := ((a_1,a_1'),(a_2,a_2')) \in E_S$.

  \begin{claim} \label{c:ewgp}
    $\tp(e)$ is wgp, and $\bdl(e) = 3$.
  \end{claim}
  \begin{proof}
    Since $a_1' \ind ^{\bdl} a_3$,
    we have $a_1' \ind ^{\bdl} a_1a_2$,
    and using also $\tp(a_1')$ is wgp and that $e \in \acl^0(a_1'a_1a_2)$, we 
    see by Lemma~\ref{l:wgpFacts} that each of
    $\tp(a_1'/a_1a_2)$, $\tp(a_1'a_1a_2)$, and $\tp(e)$ is wgp, and that
    $\bdl(e) = \bdl(a_1'a_1a_2) = \bdl(a_1') + \bdl(a_1a_2) = 1 + 2 = 3$.
    \subqed{c:ewgp}
  \end{proof}

  By Lemma~\ref{l:intInType}, let $E \subseteq  \tp(e)(\K)$ be an internal subset such that 
  $\bdl(E) = \bdl(e) = 3$,
  and for $i=1,2$ let $Y_i := \pi_{i,i'}(E)$,
  where $\pi_{i,i'} : E_S \rightarrow  S^2$ is the co-ordinate projection such 
  that $\pi_{i,i'}(e) = (a_i,a_i')$.
  Then $Y_i \subseteq  \tp(a_ia_i')(\K)$ and hence $\bdl(Y_i) \leq 2$,
  and the fibres of $E$ under $\pi_{i,i'}$ are in bijection via another 
  projection with a subset of $\tp(a_{3-i})$,
  so $3 = \bdl(E) \leq  1 + \bdl(Y_i)$.
  So $\bdl(Y_i) = 2$.

  Now $a_1 \ind ^0 a_1'$ since $\tp(a_1)$ is wgp, so every element of $Y_1$ is 
  generic in $S^2$ over $\acl^0(\emptyset )$.
  If two elements of $Y_1$ are interalgebraic over $\acl^0(\emptyset )$, then this is 
  witnessed by a generically finite-to-finite correspondence of $S^2$ with 
  itself,
  so the relation of interalgebraicity on $Y_1$ is the union of internal 
  relations with bounded maximal degree.
  Applying Lemma~\ref{l:chromatic},
  we obtain an internal set
  $Y_1' \subseteq  Y_1$ such that setting $E_0 := E \cap \pi_{1,1'}^{-1}(Y_1')$, we 
  have $\bdl(E_0) = \bdl(E) = 3$, and if $x,x' \in Y_1'$ and $x \neq  x'$,
  then $\trd^0(x) = 4 = \trd^0(x')$ and $\trd^0(x,x') > 4$.
  As above, $\bdl(Y_1') = 2$.

  Now let $E' := E_S \cap (Y_1' \times Y_2) \supseteq E_0$.
  For $s \in \N^\U$, let
  \[F_s := \bigcup \{ \{x,x'\} \times (B \cap \pi^2) : \{x,x'\} \times B \subseteq  
  E',\; x \neq  x',\; |B| = s,\; \pi \in \Pi,\; x,x' \in \pi^2 \}.\]
  Note that $F_s$ is internal, since the existential quantification of $B$ can
  be taken to be over the set of all internal subsets of $S^2$, which is
  itself an internal set.

  Let $m \in \N$ be as in Theorem~\ref{t:coplanarK2s}.
  By that result, if $s\geq m$ then $F_s$ is a $(2,s)$-transversal of $E'$,
  as defined in Definition~\ref{d:transversalPsf}.

  Now $\bdl(E') = 3 = \bdl(Y_1') + \frac12\bdl(Y_2) > 
  \max(\bdl(Y_1'),\bdl(Y_2)) = 2$,
  so by Corollary~\ref{c:distIncBdl}, for some $s$ with $\bdl(s) > 0$,
  setting $F := F_s$,
  we have $\bdl(E' \setminus F) < \bdl(E') = 3$,
  and so also $\bdl(E_0 \setminus F) < 3$.

  We may assume that $E_0$ and $F$ are also definable in $\L$, and that $e \in 
  E_0$; indeed,
  we can add $E_0$ and $F$ as new predicates to $\L$ and close off to a 
  countable language $\L'$ for which $\bdl$ is continuous,
  then since $E_0 \subseteq  \tp^{\L}(e)(\K)$, we have $\bdl(\tp^{\L}(e) \cup \{ x \in 
  E_0 \}) = \bdl(E_0) = 3$, so we can complete this partial type to a complete 
  $\L'$-type $p$ with $\bdl(p) = 3$, and so we can assume $e \vDash  p$ without 
  changing $\bdl(e)=3$ nor the $\L$-type of $e$.
  Note that we still have $\bdl(a_i) = 1$, since
  $$1 = \bdl(S_1) \geq  \bdl(a_1) = \bdl(e) - 
  \bdl(e/a_1) = \bdl(e) - \bdl(a_2,a_1'/a_1) \geq  3 - 2 = 1.$$

  Since $\bdl(E_0 \setminus F) < \bdl(e)$, we must have $e \in F$.

  The co-ordinates of any element of $E_S$ span a plane in $\P^3$.
  Let $\pi_E : E_S \rightarrow  \Pi$ be the corresponding map.
  Let $\pi := \pi_E(e)$ be the plane spanned by $e$.

  Then by the definition of $F$,
  $\pi_E^{-1}(\pi) \cap F$ contains some $\{x,x'\} \times (B \cap \pi^2)$, where 
  by Theorem~\ref{t:coplanarK2s} we have $|B \cap \pi^2| \geq  |B| - m = s - m$, so 
  $\bdl(B \cap \pi^2) \geq  \bdl(s)$.
  Then by Lemma~\ref{l:genericFibre},
  $$\bdl(e/\pi) = \bdl(\pi_E^{-1}(\pi) \cap F) \geq  \bdl(B \cap \pi^2) \geq  \bdl(s) > 0.$$

  Set $\eta := \frac13\bdl(e/\pi) > 0$.
  Then $\bdl(\pi) = \bdl(e) - \bdl(e/\pi) = 3 - 3\eta$.
  Also $\eta < 1$ since $\bdl(e/\pi) < \bdl(e) = 3$ by wgp,
  since $\trd^0(\pi) > 0$ since $\trd^0(a_1) > 1$.

  \begin{claim} \label{c:bdlEqEta}
    $\bdl(a_1/\pi) = \bdl(a_2/\pi) = \bdl(a_2/\pi a_1) = \eta$.
  \end{claim}
  \begin{proof}
    First, we show that if $a \in \pi$ with $\bdl(a) = 1$, then $\bdl(a/\pi) \leq  
    \eta$.

    Indeed, let $(a, a', a'')$ be a $\bdl$-independent sequence in 
    $\tp(a/\pi)$,
    i.e.\ let $a' \vDash  \tp(a/\pi)$ with $\bdl(a'/a\pi) = \bdl(a/\pi)$
    and let $a'' \vDash  \tp(a/\pi)$ with $\bdl(a''/aa'\pi) = \bdl(a/\pi)$.
    If $\bdl(a/\pi) = 0$, then certainly $\bdl(a/\pi) \leq  \eta$.
    Otherwise, $a,a',a''$ are three distinct points on $\pi$,
    so $\pi \in \acl^0(a,a',a'')$.
    Then
    $$\bdl(a/\pi) + \bdl(\pi) - \bdl(a) = \bdl(\pi/a) = \bdl(a',a''/a) - 
    \bdl(a',a''/a,\pi) \leq  2 - 2 \bdl(a/\pi),$$
    so $3\bdl(a/\pi) \leq  1 + 2 - \bdl(\pi) = \bdl(e) - \bdl(\pi) = \bdl(e/\pi)$.

    Hence $\bdl(a_1/\pi),\bdl(a_2/\pi),\bdl(a_1'/\pi) \leq  \frac13\bdl(e/\pi)$.
    On the other hand, $\bdl(e/\pi) = \bdl(a_1,a_2,a_1'/\pi)$, so by 
    additivity we must have the desired equalities.
    \subqed{c:bdlEqEta}
  \end{proof}

  Consider the $a_1$-{\cstr} projective plane $\Pi_{a_1}$ of 
  projective lines in $\P^3$ passing through $a_1$, and the corresponding 
  projection map $\pi_{\Pi_{a_1}} : \P^3 \setminus \{a_1\} \rightarrow  \Pi_{a_1}$.
  Then $p := \pi_{a_1}(a_2)$ is a point lying on the line $l := 
  \pi_{a_1}(\pi)$ in $\Pi_{a_1}$.

  Now $a_1$ is good, and $a_2 \in S$, so $a_2$ is interalgebraic with $p$ 
  over $a_1$.
  Also $\pi$ is interalgebraic with $l$ over $a_1$,
  so
  $$\bdl(l/a_1) = \bdl(\pi/a_1) = \bdl(a_1/\pi) + \bdl(\pi) - \bdl(a_1) = 
  2-2\eta,$$
  and
  $$\bdl(pl/a_1) = \bdl(l/a_1) + \bdl(p/la_1) = 2-2\eta + \bdl(a_2/\pi a_1) 
  = 2-2\eta + \eta = 2-\eta.$$
  Also $\bdl(p/a_1) = \bdl(a_2/a_1) = \bdl(a_2) = 1$.

  By Corollary~\ref{c:Toth}, writing $I$ for the point-line incidence relation on 
  $\Pi_a$ and setting $A_1 := \tp(p/a_1)$ and $A_2 := \tp(l/a_1)$, we obtain
  \begin{align*} 2-\eta = \bdl(pl/a_1) &\leq  \bdl((A_1\times A_2)\cap I) \\
  &\leq  \max(\frac23\bdl(p/a_1) + \frac23\bdl(l/a_1),\bdl(p/a_1),\bdl(l/a_1)) 
  \\
  &\leq  \max(\frac23 + \frac23(2-2\eta), 1, 2-2\eta) \\
  &= \max(2-\frac43\eta,1,2-2\eta), \end{align*}
  which contradicts $0 < \eta < 1$.

  \underline{\bf Step 2}:
  So $\trd^0(a_i) < 2$ for $i=1,2,3$; since $\bdl(a_i) = 1 > 0$, actually 
  $\trd^0(a_i) = 1$.

  Now for $i\neq j$ we have $a_i \notin \acl^0(a_j)$, since $\bdl(a_i/a_j) = 1 
  > 0$,
  so $\trd^0(a_i) = 1 = \trd^0(a_i/a_j)$.
  In this situation, Corollary~\ref{c:curvesCoplanar} yields that 
  $a_1,a_2,a_3$ lie on a {\cstr} plane, as required.
\end{proof}

\section{Elekes-Szabó for collinearity on a cubic surface}
\label{s:concl}
In this section, we put everything together and obtain our main conclusions.

\begin{theorem} \label{t:mainOrchard}
  Suppose $S \subseteq  \P^3(\K)$ is a cubic surface with smooth irreducible 
  components.
  Then $R_S$ has no non-planar wES-triangles if and only if $S$ is not the union of
  three collinear planes.
\end{theorem}
\begin{proof}
  We consider cases.
  \begin{itemize}\item
  $S$ is irreducible. By Theorem~\ref{t:mainOrchardIrred} (noting that we
  quantified at the start of that section over suitable $\L$), $R_S$ has no 
  non-planar wES-triangle.
  \item $S$ has two irreducible components. Then $S$ is the union of a plane 
    and a smooth quadric surface, and we conclude similarly by Corollary~\ref{c:mainOrchardQuadric}.
  \item $S$ has three irreducible components. Then $S$ is the union of three 
  distinct planes, and we have two subcases.
    \begin{itemize}
      \item The planes are not collinear. We conclude by Proposition~\ref{p:mainOrchardPlanar}.
      \item The planes are collinear. Then $R_S$ does have non-planar
        wES-triangles (for some $\L$) by Example~\ref{e:collinearPlanes}.
        \qedhere
    \end{itemize}
  \end{itemize}
\end{proof}

\subsection{Finitary consequences}
\label{ss:conclFin}
We first extract the consequences for pseudofinite sets of having no 
non-planar wES-triangles.

\begin{lemma}\label{l:wESPlanarInternal}
  Suppose $R_S$ has no non-planar wES-triangles.

  Let $A_1,A_2,A_3 \subseteq  \P^3$ be internal sets with $\bdl(A_i) = 1$ and
  $\bdl(\prod_i A_i \cap R_S) = 2$.

  Then for some plane $\pi\subseteq \P^3$,
  we have $\bdl(\prod_i A_i \cap R_S \cap \pi^3) = 2$
  (and hence $\bdl(A_i \cap \pi) = 1$ for $i=1,2,3$).
\end{lemma}
\begin{proof}
  Fix a language $\L$ satisfying Assumptions~\ref{a:L} in which each $A_i$ is 
  $\emptyset$-definable and such that $S$ is {\cstr}.

  Let $\a = (a_1,a_2,a_3) \in \prod_i A_i \cap R_S$ with $\bdl(\a) = 2$,
  so $\bdl(a_i) = 1 = \bdl(a_i/a_j)$ for $i\neq j$.

  By Lemma~\ref{l:wgpEventually}(i), let $c$ be such that $\bdl(c)=0$ and 
  $\tp(\a/c)$ is wgp. By Lemma~\ref{l:wgpConstants}, replacing $\L$ with 
  $\L^0(c)$, we may assume that $\tp(\a)$ is wgp.

  By Lemma~\ref{l:wgpFacts}(\ref{li:wgpImage}), $\tp(a_i)$ is wgp for 
  $i=1,2,3$.

  Now $\a$ is a wES-triangle, so by assumption
  we obtain a {\cstr} plane $\pi \ni a_1,a_2,a_3$.
  Then $(\x \in \prod A_i \cap R_S \cap \pi^3) \in \tp(\a)$,
  so $$2 = \bdl(\prod_i A_i \cap R_S) \geq  \bdl(\prod_i A_i \cap R_S \cap \pi^3) 
  \geq  \bdl(\a) = 2,$$
  hence  $\bdl(\prod_i A_i \cap R_S \cap \pi^3) =2$. 
  
  As the relation $R_S$ is algebraic in each co-ordinate,  for distinct $i,j\in \{1,2,3\}$ we have  $$2=\bdl(\prod_i A_i \cap R_S \cap \pi^3) \leq \bdl((A_i\cap \pi)\times (A_j\cap \pi))=\bdl(A_i\cap \pi)+ \bdl(A_j\cap \pi)\leq 1+1=2,$$ so $\bdl(A_i\cap \pi)=1$.
\end{proof}

Finally, we deduce a finitary statement.
The version stated in the introduction, 
Theorem~\ref{t:mainOrchardFiniteIntro}, is precisely the symmetric case 
$A_1=A_2=A_3=A$ of the following statement.
\begin{corollary} \label{c:mainOrchardFinite}
  For any $\epsilon > 0$ there exists $\eta > 0$ and $N_0 \in \N$ such that
  if $S \subseteq  \P^3(\C)$ is a cubic surface with smooth irreducible components
  which is not the union of three collinear planes, 
  and $A_1,A_2,A_3 \subseteq  S$ are finite subsets with $|A_1| = |A_2| = |A_3| =: N 
  \geq  N_0$
  such that $|R_S \cap \prod_i A_i| > N^{2-\eta}$,
  then for some plane $\pi \subseteq  \P^3(\C)$
  we have $|R_S \cap \pi^3 \cap \prod_i A_i| > N^{2-\epsilon}$.
\end{corollary}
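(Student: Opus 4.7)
The plan is to deduce this finitary statement from Theorem~\ref{t:mainOrchard} and Lemma~\ref{l:wESPlanarInternal} by the standard ultraproduct trick: negate the conclusion, assemble the putative counterexamples into a pseudofinite configuration, and apply the infinitary result for a contradiction.

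Fix $\epsilon > 0$ and suppose for contradiction that no $\eta > 0$ and $N_0 \in \N$ work. Then for each $n \in \N$ I can choose a cubic surface $S_n \subseteq \P^3(\C)$ with smooth irreducible components, not a union of three collinear planes, and subsets $A_{1,n},A_{2,n},A_{3,n} \subseteq S_n$ of common cardinality $N_n \geq n$ with $|R_{S_n} \cap \prod_i A_{i,n}| > N_n^{2 - 1/n}$, but such that $|R_{S_n} \cap \pi^3 \cap \prod_i A_{i,n}| \leq N_n^{2 - \epsilon}$ for every plane $\pi \subseteq \P^3(\C)$. After passing to a sub-ultrafilter I may assume all $S_n$ share the same decomposition type (same number of irreducible components of matching degrees). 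Fix a non-principal ultrafilter $\U$ on $\N$, set $\K := \C^\U$ and $\xi := (N_n)_n/\U$ (so $\bdl = \bdl_\xi$), and form $S := \prod_n S_n /\U \subseteq \P^3(\K)$ together with the internal sets $A_i := \prod_n A_{i,n}/\U \subseteq S$, working in a countable language $\L$ expanding the ring language by constants for defining parameters of $S$ and the $A_i$, closed off for continuity as in Section~\ref{s:psfInd}.

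By Łoś, $S$ is again a cubic surface over $\K$ with smooth irreducible components, and it is not a union of three collinear planes: in the three-plane case, sharing a common line is a closed algebraic condition on the three defining linear forms (a rank condition on a $3\times 4$ matrix) and fails uniformly in $n$, so it fails in the ultraproduct. From $|A_i| = \xi$ I get $\bdl(A_i) = 1$. The lower bound $\bdl(\prod_i A_i \cap R_S) \geq \lim_\U (2 - 1/n) = 2$ is immediate, and the upper bound $\bdl(\prod_i A_i \cap R_S) \leq 2$ follows because the line through any two distinct points of $S$ meets $S$ in at most three points (recall $R_S$ excludes lines contained in $S$), so each collinear triple is determined by any two of its points up to a bounded constant. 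Hence $\bdl(\prod_i A_i \cap R_S) = 2$ exactly.

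Now Theorem~\ref{t:mainOrchard} says $R_S$ has no non-planar wES-triangles, so Lemma~\ref{l:wESPlanarInternal} yields a plane $\pi \subseteq \P^3(\K)$ with $\bdl(\prod_i A_i \cap R_S \cap \pi^3) = 2$. Since $\pi$ is defined over $\K$ it is represented by a sequence of planes $\pi_n \subseteq \P^3(\C)$; unwinding the definition of $\bdl$, for any fixed $\epsilon' \in (0,\epsilon)$ we have $|R_{S_n} \cap \pi_n^3 \cap \prod_i A_{i,n}| > N_n^{2 - \epsilon'} > N_n^{2 - \epsilon}$ for $\U$-many $n$, contradicting our standing assumption. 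There is no real obstacle here: all the mathematical content lives in Theorem~\ref{t:mainOrchard}, and the work above is just bookkeeping around Łoś, the ultraproduct representation of $\pi$, and the translation between finite and pseudofinite cardinalities via $\bdl$.
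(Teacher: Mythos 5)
Your proof is correct and follows essentially the same ultraproduct-compactness reduction as the paper: assemble hypothetical counterexamples into a pseudofinite cubic surface, verify via Łoś that it satisfies the hypotheses of Theorem~\ref{t:mainOrchard}, and derive a contradiction through Lemma~\ref{l:wESPlanarInternal}. You are in fact slightly more explicit than the paper about the upper bound $\bdl(\prod_i A_i \cap R_S)\leq 2$ and about translating the resulting plane back to the finite level, both of which the paper leaves implicit.
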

\begin{proof}
  Otherwise, there is $\epsilon > 0$ and for each $n \in \N$ such a cubic 
  surface $S_n \subseteq  \P^3(\C)$ and finite subsets giving a counterexample with 
  $|(A_i)_n| = N > n$ and $\eta = \frac1n$.

  Let $S := \prod_{n \rightarrow  \U} S_n \subseteq  \P^3(\K)$. We can write each $S_n$ as the 
  zero set of an irreducible cubic polynomial, or the product of irreducible 
  linear or quadratic polynomials. It is a standard and easy fact that 
  smoothness and irreducibility are polynomially definable properties of the 
  coefficients of such a polynomial, so $S$ is also a cubic surface with 
  smooth irreducible components. In the case of three planes, collinearity is 
  also definable, so $S$ is not the union of collinear planes.
  We have $\prod_{n \rightarrow  \U} R_{S_n} = R_S$.

  Setting $A_i := \prod_{n \rightarrow  \U} (A_i)_n$ and $\bdl := \bdl_\xi$ where $\xi 
  := |A_i|$, we get $\bdl(A_i)=1$ for $i\in \{1,2,3\}$ but, by the choice of $(A_i)_n$, for any plane $\pi\subseteq  \P^3(\K)$ we have $\bdl(\prod_i A_i \cap R_S \cap \pi^3) \leq 2-\epsilon<2$. Thus $A_1$, $A_2$, and $A_3$ do not satisfy the conclusion of Lemma~\ref{l:wESPlanarInternal}, hence contradicting Theorem~\ref{t:mainOrchard}.
\end{proof}

\begin{question}
  How does $\eta$ depend on $\epsilon$? Can we take $\eta = \frac12\epsilon$?

  Also, does the following incomparable version of 
  Corollary~\ref{c:mainOrchardFinite} hold
  (corresponding to a version of Theorem~\ref{t:mainOrchard} for the ``fine''
  pseudofinite dimension)?
  For any $c > 0$ there exist $C > 0$ and $N_0 \in \N$ such that
  if $S \subseteq  \P^3(\C)$ is a smooth irreducible cubic surface
  which is not the union of three collinear planes,
  and $A_1,A_2,A_3 \subseteq  S$ are finite subsets with $|A_1| = |A_2| = |A_3| =: N \geq  N_0$
  such that $|R_S \cap \prod_i A_i| > CN^2$,
  then for some plane $\pi \subseteq  \P^3(\C)$
  we have $|R_S \cap \pi^3 \cap \prod_i A_i| > cN^2$.
\end{question}


\appendix
\section{Coherent group actions}
\label{s:cohAct}

In this appendix, we apply the results of \cite{BGT-lin} on the nilpotence of 
characteristic 0 algebraic groups admitting Zariski-dense approximate 
subgroups, or rather their reformulation in terms of \emph{wgp-coherence} 
given in \cite{bgth}, to obtain an analogous criterion (Theorem~\ref{t:cohAct} 
below) for an algebraic group action.
This is the key tool used in our analysis of reducible cubic surfaces in
\S\ref{s:reducible}.

In this appendix, we denote group operations and actions by concatenation.

\begin{definition}
  We say the graph $\Gamma_G = \{(x,y,z) : xy=z\}$ of the group operation of a 
  connected algebraic group $G$ over $\K$ is \defn{wgp-coherent} if there are 
  wgp in $G$ internal subsets $A_1,A_2,A_3 \subseteq  G$ such that $\bdl(\prod_j A_j 
  \cap \Gamma_G) = 2\bdl(A_1)= 2\bdl(A_2)=2\bdl(A_3)$.
\end{definition}

\begin{fact}[{\cite[Theorem~2.14$(i)\Rightarrow(iv)$, Remark~2.15]{bgth}}]
  \label{f:wgpCohNilp}
  If the graph of the group operation of a connected algebraic group $G$ over 
  $\K$ is wgp-coherent, then $G$ is nilpotent.
\end{fact}

\begin{lemma} \label{l:unifWgpGroup}
  Let $G$ be a connected algebraic {\cstr} group.
  Let $(V_b)_b$ be a {\cstr} left-invariant family of proper 
  subvarieties of $G$,
  where left-invariance means that for all $b$ and $g \in G$ there is $b'$ such 
  that $gV_b = V_{b'}$.

  Suppose $g \in G$ is generic in $G$ with $\tp(g)$ wgp,
  and let (by Lemma~\ref{l:wgpWgp} and Remark~\ref{r:wgpUnif}) $\epsilon>0$ be such that 
  $\bdl(\tp(g) \cup \{ x \in V_b \}) \leq  \bdl(g) - \epsilon$ for all $b$.

  Let $h \in G$ with $h \ind ^{\bdl} g$.
  Then also $\bdl(\tp(hg) \cup \{ x \in V_b \}) \leq  \bdl(hg) - \epsilon$
  for all $b$.
\end{lemma}
\begin{proof}
  Fix $b$.
  Let $\alpha := \bdl(\tp(hg) \cup \{ x \in V_b \})$.
  We may assume that $\bdl(hg/b) = \alpha$ and $h,g \ind ^{\bdl}_{hg} b$;
  indeed, let (by Extension) $c$ realise an extension of the partial type 
  $\tp(hg) \cup \{ x \in V_b \}$ to a complete type over $b$ such that also 
  $\bdl(c/b) = \alpha$,
  then say $h',g',c \equiv  h,g,hg$ with $h',g' \ind ^{\bdl}_{c} b$ (we can find such 
  $h',g'$ by applying Extension to the type over $c$ corresponding to 
  $\tp(h,g/hg)$). Since $\tp(h',g',c) = \tp(h,g,hg)$ and our statement only 
  concerns this type, we may assume $(h,g,hg) = (h',g',c)$.

  Then \begin{align*} \alpha &= \bdl(hg/b) \\
  &= \bdl(g,h/b) - \bdl(g,h/hg,b) \\
  &= \bdl(g,h/b) - \bdl(g,h/hg),\end{align*}
  and
  $$\bdl(g,h/b) = \bdl(h/b) + \bdl(g/h,b) \leq  \bdl(h) + \bdl(g) - \epsilon$$ 
  (since $g \in h^{-1}V_b$ and $h^{-1}V_b=V_{b'}$ for some $b'$ by left-invariance).
  But
  $\bdl(h) + \bdl(g) - \bdl(g,h/hg) = \bdl(hg)$
  (by independence of $h$ from $g$),
  so we conclude $\alpha \leq  \bdl(hg) - \epsilon$, as required.
\end{proof}

\begin{theorem} \label{t:cohAct}
  Let $G$ be an algebraic group acting on an irreducible variety $X$ by 
  regular maps, and suppose $G$, $X$, and the action are all 
  {\cstr}.

  Let $g \in G$ and $x \in X$, and suppose $\tp(g)$ and $\tp(x)$ are wgp
  and $g \ind ^{\bdl} x$ and $g \ind ^{\bdl} gx$.

  Let $Y := \locus^0(g)$,
  and let $H := \left<{Y^{-1}Y}\right> \leq  G$.

  Suppose further that no non-trivial element of $H$ fixes $Z := 
  \locus^0(x)$ pointwise,
  i.e.\ $\{ h \in H : \forall z \in Z.\; hz=z \} = \{ 1 \}$.

  Then $H$ is nilpotent.
\end{theorem}

\begin{proof}
  First, we argue that by changing $x$ and the action,
  we may assume that $x$ has trivial stabiliser under $H$.

  \begin{claim} \label{c:trivStab}
    For some $n \in \N$, generics of $Z^n$ have trivial stabiliser under the 
    diagonal action of $H$ on $Z^n$.
  \end{claim}
  \begin{proof}
    Let $(z_i)_{i \in \omega}$ be a sequence of independent generics of $Z$,
    i.e.\ each $z_i$ is generic in $Z$ over $z_{<i} := \{z_j : j < i \}$.
    Then the stabilisers $(H_{z_{<m}})_m$ form a descending chain,
    which stabilises by Noetherianity of algebraic subgroups. 
    So for some $n$, $H_{z_{<n}} = H_{z_{<n}z_n}$.
    Then $z_n$ lies in the set
    $$\{ z \in Z : \forall h \in H.\; (hz_{<n} = z_{<n} \Rightarrow  hz=z) \} = 
    \bigcap_{h \in H_{z_{<n}}} \Fix(h) ,$$
    which is closed (since the action is by regular maps) and is 
    $z_{<n}$-{\cstr},
    so is equal to $Z$ by genericity of $z_n$.
    Hence any element of $H_{z_{<n}}$ is in the pointwise stabiliser of $Z$,
    which is trivial by assumption.
    \subqed{c:trivStab}
  \end{proof}

  Now let $n$ be as in the claim, and let $\bar x = (x_i)_{i<n}$ be a 
  $\bdl$-independent $n$-tuple of copies of $x$ over $g$,
  i.e.\ $x_i \equiv _g x$ and $x_i \ind ^{\bdl}_g x_{<i}$.
  By wgp, $\bar x$ is generic in $Z^n$.
  So $\bar x$ has trivial stabiliser with respect to the diagonal action of 
  $H$ on $X^n$,
  and $g \ind ^{\bdl} \bar x$ and $g \ind ^{\bdl} g\bar x$.
  So by redefining $x$ as $\bar x$, we may assume that $x$ has trivial 
  stabiliser with respect to $H$.

  Now let $g_0 := g$, and iteratively define $g_i$ for $i \in \omega$ as 
  follows.

  Suppose $g_i$ satisfies:
  \begin{equation*}
    \tag{*}
    g_i \ind ^{\bdl} x \text{ and } g_i \ind ^{\bdl} g_ix \text{ and } \tp(g_i) 
    \text{ is wgp, and if } i > 0 \text{ then } g_i \in H.
  \end{equation*}
  Let $g_i',x' \equiv _{g_ix} g_i,x$ with $g_i',x' \ind ^{\bdl}_{g_ix} g_i,x$.
  Then $g_i,g_i' \ind ^{\bdl} x$;
  indeed, $g_i' \ind ^{\bdl} g_i,x$,
  since $g_i' \ind ^{\bdl} g_i,g_ix$,
  since $g_i \ind ^{\bdl} g_ix$.
  By symmetry, also $g_i,g_i' \ind ^{\bdl} x'$.

  Let $g_{i+1} := g_i'^{-1}g_i$, so $x' = g_{i+1}x$,
  and so $g_{i+1}$ satisfies (*):
  $g_{i+1} \ind ^{\bdl} x$ and $g_{i+1} \ind ^{\bdl} g_{i+1}x$ and (by 
  Lemma~\ref{l:wgpFacts}) $\tp(g_{i+1})$ is wgp.

  Now $g_{i+1} \in H$; for $i>0$ this follows from $g_i \in H$, and for $i=0$ it 
  follows from $g_0 \in Y$.
  But $x$ has trivial stabiliser with respect to $H \ni g_{i+1}$,
  so $g_{i+1}$ is interdefinable with $g_{i+1}x$ over $x$,
  so $\bdl(g_{i+1}) = \bdl(g_{i+1}/x) = \bdl(g_{i+1}x/x) \leq  \bdl(g_{i+1}x) = 
  \bdl(g_{i+1}x/g_{i+1}) = \bdl(x/g_{i+1}) = \bdl(x)$.
  Meanwhile, $\bdl(g_{i+1}) \geq  \bdl(g_{i+1}/g_i') = \bdl(g_i/g_i') = \bdl(g_i)$.

  So this iterative process yields a sequence $(g_i)_{i<\omega}$
  with non-decreasing $\bdl$ bounded above by $\bdl(x)$.
  For $i>0$, it follows from the independence that $g_i$ is generic in 
  $(Y^{-1}Y)^i$,
  which by Zilber indecomposability is equal to $H$ for all large enough $i$,
  say for all $i \geq  i_0$.

  Let $\eta := \sup_i \bdl(g_i)$.

  \begin{claim} \label{c:wgpCoh}
    There exist internal sets $A_1,A_2,A_3 \subseteq  H$ such that:
    \begin{enumerate}[(i)]\item $\bdl(A_j) \leq  \eta$;
    \item $\bdl(\prod_j A_j \cap \Gamma_H) \geq  2\eta$;
    \item each $A_j$ is wgp in $H$.
    \end{enumerate}
  \end{claim}
  \begin{proof}
    We prove this by $\aleph_1$-compactness.

    Let $((X^k_b)_b)_{k \in \omega}$ enumerate the {\cstr} 
    left-invariant families of proper subvarieties of $H$.
    Observe that for $i > i_0$, each of $g_i$ and $g_i^{-1}$ can be written as $hg$ 
    with $g \equiv  g_{i_0}$ and $h \ind ^{\bdl} g$: indeed, by construction each of $g_i$ and $g_{i}^{-1}$ is a product of a $\bdl$-independent sequence of elements satisfying $\tp(g_{i_0})$ or $\tp(g^{-1}_{i_0})$,
     with the last element $g$ satisfying $\tp(g_{i_0})$, so we can let $h$ be the product of all the other
      elements in the sequence. Then $h \ind^{\bdl} g$ by transitivity.
    
    Then $\tp(g) =  \tp(g_{i_0})$ is wgp and generic in $H$,
    so by Lemma~\ref{l:unifWgpGroup},
    for each $k$ we find $\epsilon_k > 0$
    such that
    $$\forall b.\; \bdl(\tp(g_i)(\K) \cap X^k_b)
    \leq  \bdl(g_i) - \epsilon_k
    \leq  \eta - \epsilon_k,$$
    for all $i>i_0$, and the same inequality holds for $\tp(g_i^{-1})$.

    Now given $m>0$, by definition of $\eta$ and the independence $g_i' 
    \ind ^{\bdl} g_i$, we can find $i>i_0$ such that $\bdl(g_i,g_i') \geq  
    2\eta-\frac1m$.
    Then a compactness argument yields $\emptyset $-definable sets $A_1 \ni 
    (g_i')^{-1}$, $A_2 \ni g_i$, and $A_3 \ni g_{i+1}$
    such that $\bdl(A_j) \leq  \eta+\frac1m$,
    and $\bigwedge_{k < m} \forall b.\; \bdl(A_j \cap X^k_b) \leq  \eta - \epsilon_k + 
    \frac1m$;
    the compactness argument for this last condition may require some 
    explanation, one way to see it is as follows:
    $\tp(g_i)$ is equivalent to the conjunction of a chain of $\emptyset $-definable 
    sets $\bigwedge_{l \in \omega} B_l$, and if there were $\beta>0$ such that for 
    all $l$ there is $b$ with $\bdl(B_l \cap X^k_b) \geq  \eta - \epsilon_k + 
    \beta$, then by $\aleph_1$-compactness we would find $b_0$ such that 
    $\bdl(B_l \cap X^k_{b_0}) \geq  \eta - \epsilon_k + \beta$ for all $l$, 
    contradicting $\bdl(\tp(g_i) \cap X^k_{b_0}) \leq  \eta - \epsilon_k$.

    Now if $V_{b_0} \subsetneq  H$ is a proper subvariety (equivalently, since $H$ is 
    connected, a lower dimension subvariety), it is part of a 
    {\cstr} family $(V_b)_b$ of such,
    and then $\{ hV_b : h \in H,\; b \}$ is a left-invariant {\cstr} 
    family of such.
    So every proper subvariety of $H$ is of the form $X^k_b$ for some $k$ and 
    $b$.

    Meanwhile, $((g_i')^{-1}, g_i,g_{i+1}) \in \Gamma_H$, so
    $$\bdl(\prod_j A_j \cap \Gamma_H) \geq  \bdl((g_i')^{-1},g_i,g_{i+1}) =
    \bdl(g_i,g_i') \geq  2\eta - \frac1m .$$

Summarising, for every $m>0$ we have found  $\emptyset$-definable sets $A_1$, $A_2$, and $A_3$ such that $\bdl(A_j)\leq \eta+\frac 1m$, $ \bdl(\prod_j A_j \cap \Gamma_H) \geq  2\eta - \frac1m $, and $\bigwedge_{k < m} \forall b.\; \bdl(A_j \cap X^k_b) \leq  \eta - \epsilon_k + 
\frac1m$.
   By $\aleph_1$-compactness, we may find internal sets $A_1$, $A_2$, and $A_3$ satisfying these conditions for all $m$ simultaneously, and hence satisfying conditions (i), (ii), and (iii) of the claim.
    \subqed{c:wgpCoh}
  \end{proof}

  Hence $\Gamma_H$ is wgp-coherent, and nilpotence of $H$ follows from 
  Fact~\ref{f:wgpCohNilp}.
\end{proof}

\begin{remark} \label{r:bourgain}
  Although the characteristic 0 assumption is necessary for Theorem~\ref{t:cohAct} 
  as stated, it may be interesting to consider a positive characteristic 
  version. In particular, the main result of \cite{Bourgain-SL2} can be seen 
  to be of this form.
\end{remark}

\section{Incidence bounds after eliminating complete bipartite graphs}
\label{s:STvariant}
We explain how to adapt results on Szemerédi-Trotter-like incidence bounds for
$K_{d,s}$-free binary relations satisfying certain tameness conditions to 
apply also to such relations which are not actually $K_{d,s}$-free, by 
bounding what is left once (parts of) all instances of $K_{d,s}$ are deleted 
(even though this deletion need not preserve the tameness). This is used in 
the proof of Proposition~\ref{t:mainOrchardIrred}.

\begin{definition}
  Given $d,s \in \N$ and a binary relation $E \subseteq  X \times Y$,
  a \defn{$(d,s)$-transversal} of $E$ is a subset $F \subseteq  E$ such that
  for every $A \times B \subseteq  E$ with $|A| \geq  d$ and $|B| \geq  s$,
  for every $a \in A$ there exists $b \in B$ such that $(a,b) \in F$.
\end{definition}

In particular, the union of all instances of $K_{d,s}$ in $E$ is a 
$(d,s)$-transversal.

\begin{proposition} \label{p:constInc}
  Let $W_1$, $W_2$, and $V \subseteq  W_1 \times W_2$ be constructible sets over a 
  field $L$ of characteristic 0.
  Then there exists $t > 1$ such that
  for any $d > 1$,
  there is $C > 0$ such that
  for any $s \in \N$
  and any $A_i \subseteq_{\fin}  W_i(L)$,
  if $F$ is a $(d,s)$-transversal of $E := V(L) \cap (A_1 \times A_2)$, then
  $$|E \setminus F| \leq  Cs(|A_1|^{\frac{(t-1)d}{td-1}} |A_2|^{\frac{td-t}{td-1}} + 
  |A_1| + |A_2|).$$
\end{proposition}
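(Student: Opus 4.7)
The plan is to reduce the proposition to a known Szemerédi-Trotter type bound for $K_{d,s}$-free constructible incidence relations in characteristic zero (as in Chernikov-Galvin-Starchenko \cite{CGS,CPS}), the reduction being furnished by the definition of a $(d,s)$-transversal.

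First I would make the following simple observation: if $F$ is a $(d,s)$-transversal of $E$, then $E' := E \setminus F$ is $K_{d,s}$-free. Indeed, suppose $A \times B \subseteq E'$ with $|A| \geq d$ and $|B| \geq s$. Then $A \times B \subseteq E$, so by the transversal property for every $a \in A$ there exists $b \in B$ with $(a,b) \in F$, contradicting $A \times B \subseteq E \setminus F$. Hence $E' \subseteq V(L) \cap (A_1 \times A_2)$ is $K_{d,s}$-free, and our task reduces to bounding $|E'|$.

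Next I would invoke the Kővári-Sós-Turán / Szemerédi-Trotter style bound for $K_{d,s}$-free constructible relations of characteristic 0: since $V$ is constructible, there is some parameter $t > 1$, depending only on $V$ (and reflecting a polynomial / distal cell decomposition for the family of fibres of $V$), such that any $K_{d,s}$-free subrelation $E' \subseteq V(L) \cap (A_1 \times A_2)$ satisfies
\[
|E'| \leq Cs \bigl(|A_1|^{\frac{(t-1)d}{td-1}} |A_2|^{\frac{td-t}{td-1}} + |A_1| + |A_2|\bigr),
\]
with $C$ depending only on $V$ and $d$. This is the form in which \cite{CGS,CPS} (suitably specialised) delivers the incidence bound, the linear $s$-factor arising from the last step of the standard double-counting argument within each cell of the decomposition. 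Applying the bound to $E' = E\setminus F$ is precisely the conclusion of the proposition.

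The principal technical point -- and where I expect the only real work to lie -- is extracting the stated bound with exactly the factor $Cs$ and the stated exponents from the cited results. The exponents themselves come from balancing the cell decomposition parameter $t$ against the $K_{d,s}$-free bound inside each cell, and the linear dependence on $s$ (rather than $s^{1/d}$, as in the plain Kővári-Sós-Turán bound) is obtained by summing the cell-wise contributions and optimising the partition. This is a routine adaptation of the polynomial/distal partitioning argument in characteristic zero, but needs to be checked carefully to confirm the precise shape of the exponents stated in the proposition.
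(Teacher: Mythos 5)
Your opening observation that $E' := E \setminus F$ is $K_{d,s}$-free is correct, but it is exactly the step that throws away the information the proof actually needs, and the black box you then invoke does not exist in the form you state it. The result from \cite{CPS} (Theorem~2.6, as the paper records) bounds $|R \cap (A\times B)|$ when \emph{the whole grid restriction} $R \cap (A\times B)$ is $K_{d,s}$-free; it is \emph{not} a bound on arbitrary $K_{d,s}$-free subsets $E' \subseteq R\cap(A\times B)$. These are genuinely different statements, because the distal cell decomposition argument uses $K_{d,s}$-freeness at a specific interior step: one finds a point $a$, a neighbourhood $A' \ni a$ in $A$, and a small $B' \subseteq B$ so that $A' \times (E'_a \setminus B') \subseteq E$ (this containment in $E$, not $E'$, is what the cell decomposition of $R$ guarantees), and the original proof then bounds $|E_a\setminus B'|$ by $K_{d,s}$-freeness of $E$. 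If all you know is that the subrelation $E'$ is $K_{d,s}$-free, you cannot conclude $|E'_a\setminus B'|<s$, since the grid $A'\times(E'_a\setminus B')$ sits in $E$ but need not sit in $E'$.

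The paper's proof keeps the transversal hypothesis precisely because it is stronger than mere $K_{d,s}$-freeness of $E\setminus F$, and it is applied at exactly this step: if $|E'_a\setminus B'|\geq s$, then $A'\times(E'_a\setminus B')\subseteq E$ with $|A'|\geq d$, so the transversal property gives $b\in E'_a\setminus B'$ with $(a,b)\in F$, contradicting $(a,b)\in E'=E\setminus F$. The recursion then deletes $a$ and notes that $F\cap E_0$ remains a $(d,s)$-transversal of $E_0:=R\cap((A\setminus\{a\})\times B)$, so the induction is carried out in the transversal framework throughout (with a parallel adaptation of \cite[Fact~2.8]{CPS}, done in the language of \cite{FoxPach+}). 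So the ``only real work'' is not, as you suggest, tracking the exponents — those transfer verbatim — but opening up the CPS argument and threading the transversal property through it, which your reduction to ``$E'$ is $K_{d,s}$-free'' has made impossible. You also omit the reduction from a general characteristic-$0$ field $L$ to $\C$ (viewed as $\R^2$, so that $V(\C)$ lives in the distal $o$-minimal structure where \cite{CPS} applies), which is a short but necessary step.
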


In the case that $E$ is $K_{d,s}$-free, so $F = \emptyset$ is a 
$(d,s)$-transversal, Proposition~\ref{p:constInc} is essentially 
\cite[Theorem~9]{ES-groups} (the bound proven there is slightly weaker, but 
would still be sufficient for our purposes).
A stronger form of that theorem is proven in the general setting of distal 
cell decompositions as \cite[Theorem~2.8]{CPS}, and we will explain how to 
adapt the proof of the latter to obtain a corresponding generalisation, 
Proposition~\ref{p:distInc}, of Proposition~\ref{p:constInc}. We refer to 
\cite{CPS} for the definition of a distal cell decomposition. A reader who 
would prefer to restrict to the context of Proposition~\ref{p:constInc} could 
instead similarly adapt the proof of \cite[Theorem~9]{ES-groups}.

The proof of \cite[Theorem~2.8]{CPS} rests on \cite[Fact~2.10]{CPS}, so we 
first adapt the latter to the case of $(d,s)$-transversals, following the 
proof in \cite[Theorem 2.1]{FoxPach+}.
\begin{proposition}\label{p:Fox}
For every $c\in\R$ and $d\in \N$ and $t>1$ there is some constant
$C=C(c,t,d)$ such that the following holds.
Let $R\subseteq X\times Y$ be a bipartite graph such that $R$ admits a distal cell decomposition $\mathcal{T}$ with $|\mathcal{T}(B)|\leq c|B|^t$ for all $B\subseteq_{\fin} Y$. Then, for any $A\subseteq X$, $B\subseteq Y$ with $|A|=m$ and $|B|=n$, if $F$ is a $(d,s)$-transversal of $E:=R\cap(A\times B)$, we have
\[|E\setminus F|\leq Cs(m^{1-\frac1t}n+m).\]
 \end{proposition}
 \begin{proof}
The proof is essentially the same as the proof of \cite[Theorem 2.1]{FoxPach+}, except in one place where we use $F$ being a $K_{d,s}$-transversal instead of $E$ being $K_{d,s}$-free. 
 
     Let $\mathcal{F}:=\{E_a:a\in A\}$. Then by \cite[Remark 2.11]{CPS}, we have $\pi_\mathcal{F}(z)\leq cz^t$ for all $z\in\N$, where $\pi_\mathcal{F}(z):=\max\{|\mathcal{F}\cap B'|:B'\subseteq B, |B'|=z\}$. 
     Let $\mathcal{F}^*:=\{E_b:b\in B\}$.

     Recall that a subset $D_0$ of a set $D$ \emph{crosses} another subset $D_1 \subseteq D$ if both $D_1\cap D_0\neq\emptyset$ and $D_1\cap (D\setminus D_0)\neq\emptyset$.
     Assume for now that $|A| \geq d$. We use the Packing Lemma, \cite[Lemma~2.5]{FoxPach+}, in the following form: there is $\tilde{c}=\tilde{c}(c,t,d)\in\R$ such that, for any $\delta>0$, if every subset of $A$ of cardinality $d$ is crossed by at least $\delta$ sets from $\mathcal{F}^*$, then $m = |\mathcal{F}| \leq \tilde{c}(\frac n\delta)^t$.

     As in \cite[Observation 2.6]{FoxPach+}, it follows that
     there are distinct $x_1,x_2,\ldots,x_d\in A$ such that at most $2c'nm^{-\frac1t}$ sets from $\mathcal{F}^*$ cross $\{x_1,x_2,\ldots,x_d\}$, where $c'=c'(c,t,d)$. Indeed, set $c' = \tilde{c}^{\frac1t}$, and suppose that all $d$-element subsets of $A$ are crossed by at least $2c'nm^{-\frac1t}$ sets from $\mathcal{F}^*$. Then by the Packing Lemma, $m\leq \tilde{c}\left(\frac n{2c'nm^{-\frac1t}}\right)^t=m2^{-t}$, contradicting $t>0$.

Let $B':=\{b\in B: E_b \text{ crosses } \{x_1,\ldots,x_d\}\}$, so $|B'|\leq 2c'nm^{-\frac1t}$. Now consider $x_1\in A$, and suppose there are $y_1,\ldots, y_s\in B\setminus B'$ such that $(x_1,y_i)\in E$ for all $i$. Then $(x_i,x_j)\in E$ for all $i,j$, as $E_{y_j}$ does not cross $\{x_1,\ldots,x_d\}$. Therefore, by the definition of $(d,s)$-transversal, there exists some $y_{j_0}$ such that $(x_1,y_{j_0})\in F$. Let $E':=E\setminus F$. The above argument gives that $|E'_{x_1}\cap (B\setminus B')|\leq s-1$. Thus,   \[|E'_{x_1}|\leq 2c'nm^{-\frac1t}+s-1.\]
Now we remove $x_1$ from $A$ and repeat the argument until there are fewer than $d$ elements in $A$. Therefore, \[|E'|\leq (d-1)n+\sum_{i=d}^{m}(2c'ni^{-\frac1t}+s-1)\leq sm+nd+2c'n(\sum_{i\leq m}i^{-\frac1t})\leq Cs(m^{1-\frac1t}n+m)\] for some $C=C(c',t,d)=C(c,t,d)$ as desired.
 \end{proof}

\begin{proposition} \label{p:distInc}
  Let $d,t \in \N_{>1}$ and $c \in \R$.
  There exists $C$ such that the following holds.
  Let $R \subseteq  X \times Y$ be a binary relation which admits a distal cell 
  decomposition $\calT$ with $|\calT(B)| \leq  c|B|^t$ for all $B \subseteq_{\fin}  Y$.
  Then for all $s \in \N_{>1}$ and $A \subseteq_{\fin}  X$ and $B \subseteq_{\fin}  Y$,
  if $F$ is a $(d,s)$-transversal of $E := R \cap (A\times B)$, then
  $$|E \setminus F| \leq  Cs(|A|^{\frac{(t-1)d}{td-1}} |B|^{\frac{td-t}{td-1}} + |A| + 
  |B|).$$
\end{proposition}
\begin{proof}
  In the case that $E$ is $K_{d,s}$-free, so $F = \emptyset$ is a 
  $(d,s)$-transversal, this is precisely the statement of 
  \cite[Theorem~2.10]{CPS}. We briefly explain how to adapt the proof of that 
  theorem.

  First, note that $E' := E \setminus F$ is $K_{d,s}$-free, so \cite[Fact~2.9]{CPS} 
  applies to it.
  By the Proposition~\ref{p:Fox}, \cite[Fact~2.10]{CPS} also goes through to bound $|E'|$. Now, the rest of the proof follows verbatim, except at the point that $K_{d,s}$-freeness is used; our assumption that $F$ is a $(d,s)$-transversal substitutes for this in exactly the same way as in Proposition~\ref{p:Fox}.
\end{proof}
\begin{proof}[Proof of Proposition~\ref{p:constInc}]
  We may assume $L = \C$, since any finitely generated subfield of $L$ embeds 
  in $\C$.
  Identifying $\C = \R + i\R$ with $\R^2$,
  $V(\C)$ is semi-algebraic,
  i.e.\ definable in the distal structure $(\R;+,\cdot)$,
  so $V(\C)$ admits a distal cell decomposition; see e.g.\ 
  \cite[Fact~2.13]{CPS}.
  So Proposition~\ref{p:distInc} applies to yield the necessary bound.
\end{proof}

\subsection{Pseudofinite consequences}

\begin{definition}\label{d:transversalPsf}
If $E \subseteq  X \times Y$ is an internal binary relation and $d,s \in \N^\U$,
call an internal subset $F \subseteq  E$ a \defn{$(d,s)$-transversal} of $E$ if for 
every internal $A \times B \subseteq  E$ with $|A| \geq  d$ and $|B| \geq  s$, for every $a 
\in A$ there exists $b \in B$ such that $(a,b) \in F$.
\end{definition}

Note that $\prod_{i \rightarrow  \U} E_i$ is a $(\prod_{i \rightarrow  \U} d_i, \prod_{i \rightarrow  \U} 
s_i)$-transversal if and only if $E_i$ is a $(d_i,s_i)$-transversal for 
$\U$-many $i$.

Repeating the proof of \cite[Lemma~2.15]{BB-cohMod}, we obtain the following 
corollary of Proposition~\ref{p:constInc}.

\begin{corollary} \label{c:distIncBdl}
  Let $W_1$, $W_2$, and $V \subseteq  W_1 \times W_2$ be constructible over $\K$.
  Then there exists $\epsilon > 0$ such that
  if $s \in \N^\U$, and $A_i \subseteq  W_i(\K)$ are internal,
  then setting $E := V(\K) \cap (A_1 \times A_2)$,
  if $F$ is a $(2,s)$-transversal of $E$, then
  $$\bdl(E \setminus F) \leq 
  \bdl(s) + \max(\bdl(A_1) + \frac12\bdl(A_2) - \epsilon(\bdl(A_1) - 
  \frac12\bdl(A_2)),\bdl(A_1),\bdl(A_2)).$$
\end{corollary}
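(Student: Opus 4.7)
The plan is to reduce Corollary~\ref{c:distIncBdl} to Proposition~\ref{p:constInc} by the standard translation of finitary polynomial bounds into $\bdl$-inequalities via ultraproducts, exactly as in \cite[Lemma~2.15]{BB-cohMod} cited in the text. Concretely, I would apply Proposition~\ref{p:constInc} with $d := 2$ to the constructible data cutting out $V$, obtaining $t > 1$ and a constant $C > 0$ such that any finite instance with a $(2, s)$-transversal $F$ of $E = V(L) \cap (A_1 \times A_2)$ satisfies
$$|E \setminus F| \leq C s \left( |A_1|^{\frac{2(t-1)}{2t-1}} |A_2|^{\frac{t}{2t-1}} + |A_1| + |A_2| \right),$$
uniformly over the parameters defining $V$ (Proposition~\ref{p:distInc} upstream provides $t$ depending only on the distal cell decomposition of the constructible family, so $t$ and $C$ can be chosen uniformly across the ultrafactors).

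Setting $\epsilon := \frac{1}{2t-1}$, the identities $\frac{2(t-1)}{2t-1} = 1 - \epsilon$ and $\frac{t}{2t-1} = \frac{1}{2} + \frac{\epsilon}{2}$ yield
$$\frac{2(t-1)}{2t-1}\bdl(A_1) + \frac{t}{2t-1}\bdl(A_2) = \bdl(A_1) + \tfrac{1}{2}\bdl(A_2) - \epsilon\bigl(\bdl(A_1) - \tfrac{1}{2}\bdl(A_2)\bigr),$$
matching exactly the first term inside the $\max$ in the target inequality.

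To transfer the finitary bound to the pseudofinite setting, I would present every object as an ultraproduct: $V = \prod_{n \to \U} V_n$ with $V_n$ constructible over $\C$, $A_i = \prod_{n \to \U} A_{i,n}$, $F = \prod_{n \to \U} F_n$, and $s = (s_n)_n/\U$. Since being a $(2, s)$-transversal is an internal property of $F$ with $s$ as parameter, Łoś's theorem yields that $F_n$ is a $(2, s_n)$-transversal of $E_n := V_n(\C) \cap (A_{1,n} \times A_{2,n})$ for $\U$-almost every $n$. Applying the finitary bound to each such $E_n$, then taking $\bdl = \st\log_\xi$ of both sides and using that $\bdl$ of a standard constant is $0$ while $\bdl$ of a finite sum equals the maximum of the $\bdl$'s of its summands, delivers the claimed inequality.

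I anticipate no serious obstacle here: all the combinatorial content is already packed into Proposition~\ref{p:constInc}, and what remains is the exponent computation together with a template ultraproduct transfer argument. The only mild subtlety worth checking is the uniformity of $t$ and $C$ across ultrafactors, which is automatic because the bound in Proposition~\ref{p:constInc} depends only on the first-order formula defining $V$ and not on the specific interpretation of its parameters; this is exactly what makes the ``$\max$'' formulation in the conclusion insensitive to the particular presentation of the internal sets.
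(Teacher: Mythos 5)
Your proposal is correct and follows exactly the paper's approach: the paper's proof is the one-line remark ``Taking $d=2$ and setting $\epsilon := \frac{1}{2t-1} > 0$, we immediately obtain from Proposition~\ref{p:constInc} (as in \cite[Lemma~2.15]{BB-cohMod})'', and you have filled in precisely the exponent identities and the standard \L{}o\'s-plus-$\bdl$ transfer that this citation packs away.
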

\begin{proof}
  As in the proof of Proposition~\ref{p:constInc}, $V$ admits a distal cell 
  decomposition $\calT$, say with $|\calT(B)| \leq  c|B|^t$ for all $B \subseteq_{\fin}  W_2$. We claim that $\epsilon := \frac1{2t-1} > 0$ works. Consider internal sets $A_1=\prod_{i \rightarrow  \U} A_{1,i}\subseteq W_1(\K)$ and $A_2=\prod_{i \rightarrow  \U} A_{2,i}\subseteq W_2(\K)$. We may assume  $A_{1,i}$ and $A_{2,i}$ are finite. For $\U$-many $i$ we have that  $F_i$ is a $(2,s_i)$-transversal of $E_i$ (where $s=\prod_{i\rightarrow\U}s_i$, etc.), so by Proposition~\ref{p:constInc} we get 
	
	\[|E_i\setminus F_i|\leq Cs_i(|A_{1,i}|^{1-\frac1{2t-1}}|A_{2,i}|^{\frac 12 +\frac 12 \cdot \frac1{2t-1}} +|A_{1,i}|+|A_{2,i}|)\]\[\leq \ 3C s_i\max(|A_{1,i}|^{1-\epsilon}|A_{2,i}|^{\frac 12 +\frac 12 \epsilon},|A_{1,i}|,|A_{2,i}|)\]
	Applying $\log_{\xi_i}$ and taking ultralimits yields the desired inequality.
\end{proof}

We conclude this appendix with a similar pseudofinite translation of a result 
of Tóth on Szemerédi-Trotter over the complex numbers; the precise exponents 
in the bound play a crucial role in the proof of 
Theorem~\ref{t:mainOrchardIrred}.
\begin{fact}[\cite{Toth}] \label{f:Toth}
  There exists $c \in \R$ such that the following holds.
  Let $A_1$ and $A_2$ be finite sets of points and lines respectively in the 
  complex projective plane. Let $E := I \cap (A_1\times A_2)$ be the incidence 
  relation. Then
  $$|E| \leq  c(|A_1|^{\frac23}|A_2|^{\frac23} + |A_1| + |A_2|).$$
\end{fact}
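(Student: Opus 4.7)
The plan is to prove Fact~\ref{f:Toth} via polynomial partitioning, following the Solymosi--Tao and Zahl framework for complex incidence bounds. First, by choosing a generic hyperplane at infinity, we reduce to the affine setting at the cost of at most a bounded factor in incidences, so we may work in $\C^2$. Identifying $\C \cong \R^2$, a complex point becomes a point in $\R^4$ and a complex line becomes a real affine $2$-flat in $\R^4$; crucially, two distinct such $2$-flats arising from distinct complex lines meet in at most one point, so the incidence relation $E$ is $K_{2,2}$-free. This is the combinatorial property that drives the whole argument.

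Next, I would apply the Guth--Katz polynomial partitioning theorem in $\R^4$: for a parameter $D \geq 1$ to be chosen, produce a real polynomial $P$ of degree at most $D$ whose complement $\R^4 \setminus Z(P)$ has $O(D^4)$ connected components, each containing $O(|A_1|/D^4)$ points of $A_1$. Split the incidence count as $|E| = I_{\mathrm{cell}} + I_{Z}$, where $I_{\mathrm{cell}}$ counts incidences with points in cells of $\R^4 \setminus Z(P)$ and $I_{Z}$ counts incidences with points on $Z(P)$. For $I_{\mathrm{cell}}$, a line $\ell \in A_2$ not contained in $Z(P)$ meets $Z(P)$ in a real algebraic curve inside the $2$-flat $\ell$ of degree $O(D)$; by a real B\'ezout/Milnor--Thom bound this curve has $O(D^2)$ connected components, so $\ell$ enters $O(D^2)$ cells. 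Applying the K\H{o}v\'ari--S\'os--Tur\'an bound for $K_{2,2}$-free bipartite graphs within each cell, summing over cells, and optimizing $D \sim (|A_1|^2/|A_2|)^{1/3}$ yields the desired bound $O(|A_1|^{2/3}|A_2|^{2/3} + |A_1| + |A_2|)$ for $I_{\mathrm{cell}}$.

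The main obstacle is bounding $I_{Z}$ together with the contributions coming from lines $\ell \subseteq Z(P)$, where the cell-counting estimate breaks down. The standard resolution is an induction on dimension: run the partitioning scheme on $Z(P)$ itself, viewed as a real algebraic variety of dimension $3$, then on the successive strata of its singular loci, iterating until one reaches one-dimensional pieces on which the $K_{2,2}$-free property gives a trivial incidence bound. Care is needed to control the degrees of the successive partitioning polynomials and to combine the resulting error terms into a single uniform constant $c$ independent of $|A_1|$ and $|A_2|$. A more self-contained alternative would be to implement T\'oth's original cell-decomposition argument, which achieves the same bound through a delicate combinatorial induction in $\R^4$ but bypasses the polynomial method entirely; the price there is a more intricate bookkeeping of the ``good'' versus ``bad'' cells cut out by batches of $2$-flats.
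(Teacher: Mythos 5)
The paper does not prove Fact~\ref{f:Toth} at all: it is quoted as an external result, attributed to T\'oth by a bare citation, and used as a black box.  So there is no ``paper proof'' to compare against; your sketch is an attempted reconstruction of that external theorem.

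As such, your outline is a reasonable description of the modern polynomial-partitioning route (Solymosi--Tao, sharpened by Zahl) and correctly notes T\'oth's original cell decomposition as the alternative.  Two remarks.  First, a small arithmetic slip: with $O(D^4)$ cells containing $O(|A_1|/D^4)$ points each and each $2$-flat meeting $O(D^2)$ cells, the Cauchy--Schwarz optimization gives $I_{\mathrm{cell}} \lesssim |A_1|\,|A_2|^{1/2}/D + |A_2|D^2$, which balances at $D \sim (|A_1|^2/|A_2|)^{1/6}$, not $(|A_1|^2/|A_2|)^{1/3}$ as you wrote (the $1/3$ is the planar Szemer\'edi--Trotter exponent).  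Second, and more substantively, you describe the treatment of incidences on $Z(P)$ and of $2$-flats contained in $Z(P)$ as a ``standard induction on dimension,'' but this is precisely where the difficulty lives: Solymosi and Tao could only obtain $O_\epsilon\bigl(|A_1|^{2/3+\epsilon}|A_2|^{2/3+\epsilon}\bigr)$ by a multi-scale induction, and removing the $\epsilon$ (to get the sharp bound you are asked to prove) was the main content of Zahl's subsequent paper, requiring a careful analysis of how many $2$-flats can lie inside a low-degree real hypersurface and an inductive scheme on the dimension and degree of the varieties that controls the accumulation of constants.  Labelling this ``the standard resolution'' understates the obstacle; as the sketch stands, it would reproduce only the $\epsilon$-lossy Solymosi--Tao bound rather than T\'oth's theorem as stated.
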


We deduce:
\begin{corollary} \label{c:Toth}
  Let $I$ be the point-line incidence relation of a projective plane over 
  $\K$. Let $A_1$ and $A_2$ be $\bigwedge$-internal sets of points and lines 
  respectively in this plane. Let $E := I \cap (A_1 \times A_2)$. Then
  $$\bdl(E) \leq 
  \max(\frac23(\bdl(A_1) + \bdl(A_2)),\bdl(A_1),\bdl(A_2)).$$
\end{corollary}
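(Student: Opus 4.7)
The plan is to transfer the finite Tóth bound of Fact~\ref{f:Toth} to $\bdl$, in exactly the style of the deduction of Corollary~\ref{c:distIncBdl} from Proposition~\ref{p:constInc}.

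First I would reduce to the case where $A_1, A_2$ are internal. Write $A_i = \bigcap_{k\in\omega} X_{i,k}$ with each $X_{i,k}$ internal, and (replacing $X_{i,k}$ by $\bigcap_{j\le k} X_{i,j}$) decreasing in $k$. Then $E \subseteq I \cap (X_{1,k}\times X_{2,k})$ for every $k$, so $\bdl(E) \le \bdl(I \cap (X_{1,k}\times X_{2,k}))$. Since $\bdl(A_i) = \inf_k \bdl(X_{i,k})$ and the right-hand side of the asserted bound is continuous in its arguments, once the bound is proved for internal sets the $\bigwedge$-internal case follows by taking the infimum over $k$.

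Next, if either $\bdl(A_i) = \infty$ the claim is trivial, so I may assume both are finite, which forces $|A_{i,s}|$ to be finite for $\U$-many $s$. After fixing coordinates identifying the plane with $\P^2$ and the line variety with the dual plane, I write $A_i = \prod_{s\to\U} A_{i,s}$ with $A_{1,s}$ a finite set of points and $A_{2,s}$ a finite set of lines in $\P^2(\C)$. By {\L}o\'s's theorem $E = \prod_{s\to\U} E_s$ with $E_s := I \cap (A_{1,s}\times A_{2,s})$, and Fact~\ref{f:Toth} gives
\[ |E_s| \le c\bigl(|A_{1,s}|^{2/3}|A_{2,s}|^{2/3} + |A_{1,s}| + |A_{2,s}|\bigr) \le 3c\,\max\bigl(|A_{1,s}|^{2/3}|A_{2,s}|^{2/3},\,|A_{1,s}|,\,|A_{2,s}|\bigr). \]
Applying $\log_\xi$ (which is monotone), using that $\log_\xi(3c)$ is infinitesimal because $3c$ is standard while $\xi$ exceeds every standard real, that $\log_\xi$ turns products into sums, and that the standard part commutes with finite $\max$, I obtain
\[ \bdl(E) \le \max\bigl(\tfrac23\bdl(A_1) + \tfrac23\bdl(A_2),\,\bdl(A_1),\,\bdl(A_2)\bigr), \]
which is the desired inequality.

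There is essentially no obstacle: this is a routine transfer directly parallel to the deduction of Corollary~\ref{c:distIncBdl}. The only care required is the $\bigwedge$-internal to internal reduction (handled by monotonicity of $\bdl$) and the observation that multiplicative standard constants vanish under $\st\log_\xi$.
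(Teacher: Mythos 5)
Your proof is correct and follows essentially the same path as the paper's: reduce the $\bigwedge$-internal case to the internal case via the definition of $\bdl$ on countable intersections (the paper phrases this as choosing internal $A_i' \supseteq A_i$ with $\bdl(A_i') \leq \bdl(A_i) + \epsilon$, which amounts to the same thing as your decreasing $X_{i,k}$ plus continuity of the bound), and then transfer Fact~\ref{f:Toth} pointwise along the ultrafilter, observing that the multiplicative constant vanishes under $\st\log_\xi$. The paper leaves the internal case as ``a direct consequence of Fact~\ref{f:Toth}'', whereas you spell it out, but the substance is identical.
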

\begin{proof}
  In the case that $A_1$ and $A_2$ are internal, this follows directly from 
  Fact~\ref{f:Toth}, as in the proof of Corollary~\ref{c:distIncBdl}.

  The $\bigwedge$-internal case follows. Indeed, for any $\epsilon>0$ we find 
  internal sets $A_i' \supseteq A_i$ of points and lines such that $\bdl(A_i') \leq  
  \bdl(A_i) + \epsilon$. Then
  \begin{align*} \bdl(E) &\leq  \bdl(I \cap (A_1'\times A_2'))\\
  &\leq  \max(\frac23(\bdl(A_1) + \bdl(A_2) + 2 \epsilon), \bdl(A_1) + \epsilon, 
  \bdl(A_2) + \epsilon)\\
  &\leq  \max(\frac23(\bdl(A_1) + \bdl(A_2)), \bdl(A_1), \bdl(A_2)) + 
  \frac43\epsilon .\end{align*}
  Since $\epsilon>0$ was arbitrary, we conclude.
\end{proof}

\section{Fixed points of compositions of Geiser involutions}
\label{s:geiser}
Let $\K$ be an algebraically closed field of characteristic 0,
and let $S \subseteq  \P^3$ be a smooth irreducible cubic surface over $\K$.
In this appendix, we prove a purely algebro-geometric result,
Theorem~\ref{t:coplanar}, restricting the set of fixed points of the
composition of 4 suitably generic Geiser involutions on $S$.

We use the definitions of ``good'', $R_S'$, and $\Pi$ from \S\ref{ss:lines}, 
as well as Fact~\ref{f:lines}. We also use the notation $\trd^0$, $\acl^0$, 
$\locus^0$, and $\ind^0$ from Definition~\ref{d:zero} to indicate that we work 
over an algebraically closed subfield $\acl^0(\emptyset)$ of $\K$, and we 
assume that $S$ is defined over $\acl^0(\emptyset)$. This appendix is 
otherwise independent of the main text.

\subsection{Algebraic geometry preliminaries}
Our primary reference for the algebraic geometry we will use is \cite{Hart}, 
and especially \cite[Chapter~V]{Hart} on surfaces and \cite[Section~V.4]{Hart} 
on cubic surfaces in $\P^3$.

We adopt the convention that varieties are reduced but not necessarily irreducible.
A curve is a 1-dimensional variety, and a surface is a 2-dimensional variety.
A curve \emph{in} a variety is a 1-dimensional closed subvariety of the surface.
Except where otherwise mentioned, all varieties are over the algebraically 
closed field $\K$, and we often use the same notation for a variety and for 
its set of $\K$-points. More generally, we adopt ``Weil-style'' conventions
where they clarify the exposition; since we work over an algebraically closed
field, there is no conflict with the scheme-theoretic setting in
\cite{Hart}.

We work with Weil divisors \cite[II.6]{Hart}. Recall that on a smooth irreducible variety, 
Weil divisors coincide with Cartier divisors \cite[Proposition~II.6.11, 
Remark~II.6.11.1A]{Hart}.
A morphism $f : X \rightarrow  Y$ of smooth irreducible varieties induces homomorphisms $f^* : 
\Div(Y) \rightarrow  \Div(X)$ of divisor groups and $f^* : \Pic(Y) \rightarrow  \Pic(X)$ of 
divisor class groups \cite[Corollary~II.6.16, Exercise~II.6.8(a)]{Hart}; 
context will disambiguate which is meant by $f^*$.

We work with the multiplicity of a point $p$ on a curve $C$ in a surface, 
denoted $\mu_p(C)$, as in \cite[Remark~V.3.5.2]{Hart}.

We also work with blow-ups at points; see \cite[Section~V.3]{Hart} (where such 
blow-ups are termed ``monoidal transformations'').

\subsection{Collinearity on planar cubic curves}
Let $k$ be a field of characteristic 0 and let $C \subseteq  \P^2$ be an absolutely 
irreducible cubic planar curve over $k$.
Then, generalising the well-known definition of the group operation on an elliptic curve, collinearity induces an abelian group structure as follows; see e.g.\ 
\cite[Theorem~2.1(iii)$\Rightarrow$(iv), Example~2.2]{manin-cubic} or the references in the proof of
Fact~\ref{f:torsionFinite}.

\begin{fact} \label{f:cubicGroup}
  Let $C'$ be the set of non-singular points of $C(k)$. Suppose $C'$ is 
  non-empty, and let $u \in C'$.
  For $a,b,c \in C'$, write $a \o b = c$ if the divisor of the intersection of 
  some projective line in $\P^2$ with $C(k)$ is the sum of the divisors 
  corresponding to the points $a,b,c$.

  Then $x+y := u\o(x\o y)$ defines an abelian group structure on $C'$,
  and $x\o y = (u\o u) - x - y$ for any $x,y \in C'$.
\end{fact}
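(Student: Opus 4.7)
The plan is to realize $\circ$ via the divisor class group of $C$, from which both the explicit formula and the group axioms follow essentially formally.

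First I would verify that $\circ \colon C' \times C' \to C'$ is well-defined. For $a, b \in C'$ let $L$ be the line through them, or the tangent line $T_a C$ if $a = b$. Since $L$ is not a component of the irreducible cubic $C$, Bezout gives $\deg(L \cdot C) = 3$, and intersection multiplicities at $a$ and $b$ force $L \cdot C \geq a + b$, leaving a unique residual point $c \in C(k)$. If $C$ has a singular point $p$, it is a double point (as $C$ is irreducible of degree $3$), so every line through $p$ meets $C$ at $p$ with multiplicity at least $2$; a quick Bezout count then rules out both $L$ passing through $p$ and $c$ coinciding with $p$, so $c \in C'$.

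Next I would pass to $\Pic(C)$. The divisor $L \cdot C$ is the pullback of a hyperplane section of $\P^2$, so its class is independent of $L$, giving
\[ [a] + [b] + [a \circ b] = [u] + [u] + [u \circ u] \text{ in } \Pic(C) \]
for every $a, b \in C'$. Setting $\sigma(x) := [x] - [u] \in \Pic^0(C)$, this rearranges to $\sigma(a \circ b) = \sigma(u \circ u) - \sigma(a) - \sigma(b)$, which is exactly the stated formula, and applying it twice (using $\sigma(u) = 0$) yields
\[ \sigma(u \circ (x \circ y)) = \sigma(u \circ u) - \sigma(x \circ y) = \sigma(x) + \sigma(y). \]
So $x + y$ corresponds under $\sigma$ to ordinary addition in $\Pic^0(C)$. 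Associativity and commutativity of $+$ then transfer from those of $\Pic^0(C)$, provided $\sigma|_{C'}$ is injective; this is the Abel-Jacobi embedding when $C$ is smooth, and in the nodal and cuspidal cases reduces to the standard description of $\Pic^0(C)$ as the generalized Jacobian ($\cong \G_m$ or $\G_a$ respectively) together with the fact that $C' \to \Pic^0(C)$ is an isomorphism.

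The only real obstacle is the possible singular point of $C$: it prevents a direct identification of $C$ with its Jacobian and forces one to work on $C'$ throughout. The Bezout multiplicity bookkeeping in the first step is the one place where this requires any care. An alternative route avoiding $\Pic$ altogether would derive associativity directly from the Cayley-Bacharach nine-point theorem for cubics, as in Manin's book, but the singular-point analysis is essentially the same.
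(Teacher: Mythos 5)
Your proof is correct, but it takes a genuinely different route from the paper. The paper imports the group-structure assertion wholesale from the cited references (Manin, Silverman--Tate) and then proves only the final formula $x \circ y = (u\circ u) - x - y$ by a four-step purely formal manipulation inside the already-given group $(C',+)$, using nothing but the definition $x+y = u\circ(x\circ y)$ and the involution identity $(a\circ b)\circ b = a$:
$$(x\circ y)+y+x = u\circ((x\circ y)\circ y)+x = u\circ x + x = u\circ((u\circ x)\circ x) = u\circ u.$$
You instead rebuild the entire group structure from the divisor class group: the relation $[a]+[b]+[a\circ b] = 3[\text{hyperplane}]$ in $\Pic(C)$ makes $\sigma(x):=[x]-[u]$ a map $C'\to\Pic^0(C)$ under which $\circ$ and $+$ become linear, so associativity/commutativity and the closed formula both drop out simultaneously once one knows $\sigma\vert_{C'}$ is injective. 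The payoff of your approach is that it is self-contained and actually proves the group axioms rather than quoting them; the cost is that it shifts the black box from ``Manin\,/\,Silverman--Tate'' to ``the Abel--Jacobi map is injective in the smooth case and $C'\cong\Pic^0(C)$ is the generalized Jacobian in the singular cases,'' which is of comparable depth. The Bezout bookkeeping you do at the start to show $\circ$ lands in $C'$ is also something the paper silently delegates to the references, and is exactly the content the paper isolates separately as Fact~\ref{f:nonsingPlanar}. Both arguments are sound; the paper's is the minimal incremental step given its references, yours is the honest reconstruction.
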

\begin{proof}
  We give the calculation for the final statement:
  $$(x\o y) + y + x = u\o((x \o y) \o y) + x = u\o x + x = u \o ((u\o x) \o x) = u\o u.$$
\end{proof}

\begin{fact} \label{f:torsionFinite}
  Let $(C';+)$ be as in Fact~\ref{f:cubicGroup}. Then for each $n$, the $n$-torsion 
  subgroup of $C'$ is finite.
\end{fact}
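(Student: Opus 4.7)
The plan is to reduce, via base change, to the classification of one-dimensional connected commutative algebraic groups in characteristic zero. Since $C'(k) \hookrightarrow C'(\bar k)$ is a group embedding under the operation of Fact~\ref{f:cubicGroup}, we have $C'(k)[n] \subseteq C'(\bar k)[n]$, so it suffices to prove the statement over $\bar k$; I assume henceforth that $k$ is algebraically closed.

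Next I would observe that an absolutely irreducible plane cubic has at most one singular point: two singular points $p,q$ would force the line $\overline{pq}$ to meet $C$ with total intersection multiplicity at least $2+2=4$, contradicting Bezout. Hence $C$ is smooth, nodal, or cuspidal, and in each case $(C',+)$ is a smooth connected one-dimensional commutative algebraic group, since the law of Fact~\ref{f:cubicGroup} is given by rational operations in $\P^2$ and extends uniquely to a regular group structure on the smooth locus. Using the normalisation $\tilde C \to C$, which equals $C$ in the smooth case and is $\P^1$ in the singular cases (with $C'$ identified as the complement of the preimage of the singularity), I obtain an isomorphism of algebraic groups $C' \cong E$ for some elliptic curve $E$, $C' \cong \G_m$, or $C' \cong \G_a$ respectively. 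Each of these has finite $n$-torsion in characteristic zero, namely $(\Z/n\Z)^2$, $\mu_n$, and $\{0\}$, so $C'[n]$ is finite in every case.

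The step I expect to require the most care is verifying that the combinatorial group law of Fact~\ref{f:cubicGroup} genuinely coincides with the standard algebraic group law on the chosen model. This is classical in the smooth case via the chord-and-tangent construction, and in the singular cases can be handled by lifting via the normalisation and comparing against the standard parametrisations of nodal and cuspidal cubics. Alternatively one may avoid the explicit identification and argue abstractly that $[n] \colon C' \to C'$ is a morphism between smooth irreducible curves which is non-constant in characteristic zero, hence has finite fibres, reducing the problem to showing that multiplication-by-$n$ is not the zero morphism on any of the three standard models, which is immediate.
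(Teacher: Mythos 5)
Your proof is correct and takes essentially the same route as the paper: reduce to $\bar k$, classify $C'$ into the three cases (elliptic curve, $\G_m$, $\G_a$) according to whether $C$ is smooth, nodal, or cuspidal, and observe finite $n$-torsion in each case in characteristic zero. The paper handles the singular cases by citing projective equivalence to $y^2=x^3+x^2$ or $y^2=x^3$ together with the Silverman--Tate computation, where you instead pass through the normalisation and the abstract classification of one-dimensional connected commutative algebraic groups; this is a presentational difference rather than a different argument.
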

\begin{proof}
  We may assume $k$ is algebraically closed.
  If $C$ is smooth, this is a standard fact about elliptic curves; see e.g.\ 
  \cite[Example~3.4, p.179]{Shaf-algGeomI}.
  Otherwise, $C'$ is isomorphic to either the multiplicative or the additive 
  group of the field; this is well-known, we refer to
  \cite[Exercise~1.7.3]{Shaf-algGeomI} for the fact that $C$ is projectively
  equivalent to either $y^2=x^3+x^2$ (if the singularity is nodal) or
  $y^2=x^3$ (if the singularity is cuspidal), and
  \cite[Theorem~III.7]{SilvTate} which proves (even though the statement only
  discusses the rational points) that the group is then isomorphic to the
  multiplicative or additive group respectively.
  Finiteness of the $n$-torsion is then clear (since $\operatorname{char}(k) = 0$).
\end{proof}

\begin{fact} \label{f:nonsingPlanar}
  If three points $a,b,c \in C(k)$ are distinct and collinear, then each is 
  non-singular on $C$.
\end{fact}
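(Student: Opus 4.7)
The plan is to argue by intersection multiplicities and Bézout's theorem. Let $L \subseteq \P^2$ be the projective line through $a,b,c$. Since $C$ is an irreducible cubic and $L$ is not a component of $C$ (a line is not a cubic), Bézout's theorem gives
\[
\sum_{p \in L \cap C} i_p(L,C) \;=\; \deg(L) \cdot \deg(C) \;=\; 3,
\]
where $i_p(L,C)$ denotes the local intersection multiplicity.

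The key input is the standard fact that if $p$ is a singular point of $C$, then for \emph{any} line $L'$ passing through $p$, the intersection multiplicity satisfies $i_p(L',C) \geq \operatorname{mult}_p(C) \geq 2$, because the local intersection multiplicity at $p$ of two curves is at least the product of their multiplicities at $p$, and a line is smooth. Meanwhile, $i_q(L,C) \geq 1$ for every $q \in L \cap C$, in particular for $q=b$ and $q=c$, since these are distinct from $a$.

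Now suppose for contradiction that (say) $a$ is singular on $C$. Then the three distinct points $a,b,c$ contribute intersection multiplicities with $L$ of at least $2$, $1$, $1$ respectively, so
\[
\sum_{p \in L \cap C} i_p(L,C) \;\geq\; 2+1+1 \;=\; 4 \;>\; 3,
\]
contradicting Bézout. The same argument applies symmetrically to $b$ and $c$, so each of $a,b,c$ must be nonsingular on $C$. I do not foresee any real obstacle; the only thing to be a bit careful about is that $L$ is not contained in $C$, which is immediate from $\deg C = 3 > 1 = \deg L$ together with irreducibility of $C$.
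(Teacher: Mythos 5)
Your proof is correct, and it takes a somewhat different route from the paper's. The paper first observes (essentially via an intersection-number argument like yours, applied to a line through two hypothetical singular points) that an irreducible planar cubic has at most one singular point, so at least two of $a,b,c$ are non-singular; it then invokes the group law from Fact~\ref{f:cubicGroup} to conclude that the third point $a = b\o c$ also lies in the group of non-singular points. Your argument applies Bézout directly to the line $L$ through all three points: a singular point contributes local intersection multiplicity $\geq 2$ with any line through it, and the other two collinear points contribute $\geq 1$ each, forcing the total to exceed $\deg L \cdot \deg C = 3$. This is more self-contained (no appeal to the chord-tangent group structure) and slightly more direct; the paper's route is no longer but fits the surrounding development, which is already building on the group law of Fact~\ref{f:cubicGroup}. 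Both reduce to the same underlying Bézout-type bound.
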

\begin{proof}
  Since $C$ is cubic, it has at most one singular point (as in the proof of 
  Lemma~\ref{l:multDeg3Points} below), so say $b$ and $c$ are non-singular. But then 
  by Fact~\ref{f:cubicGroup}, $a = b\o c=u\circ u-b-c$ is also in the group $C'$ of non-singular 
  points, so $a$ is also non-singular.
\end{proof}

\subsection{Some geometry on the surface $S$}
\begin{fact} \label{f:pic}
  \begin{enumerate}[(a)]\item $S$ is the result of blowing up $\P^2$ at six distinct points 
  $P_1,\ldots ,P_6$,
  no three of which are collinear, and which do not lie on a common conic.

  \item The divisor class group $\Pic(S)$ is freely generated as an abelian group 
  by the divisor classes $l,e_1,\ldots ,e_6$, where $e_i$ is the class of the exceptional divisor at 
  $P_i$ and $l$ is the lift of the divisor class of a line in $\P^2$.

  \item The divisor class of a hyperplane section, $S \cap \pi$ for a plane $\pi \in 
  \Pi$, is $3l - \sum_{i=1}^6e_i$;
  conversely, any divisor with this class is a hyperplane section.

  \item A curve $C$ in $S$ with divisor class $al - \sum_{i=1}^6b_ie_i$ has 
  degree $d(C) = 3a - \sum_{i=1}^6b_i$ as a curve in $\P^3$.
  \end{enumerate}
\end{fact}
\begin{proof}
  \begin{enumerate}[(a)]\item \cite[Remark~V.4.7.1]{Hart}
  \item \cite[Proposition~V.4.8(a)]{Hart}
  \item \cite[Proposition~V.4.8(c)]{Hart}, and the converse statement follows from the fact \cite[Corollary~V.4.7]{Hart} that the projective embedding is that of this (very ample) linear system.
  \item \cite[Proposition~V.4.8(e)]{Hart} \qedhere
  \end{enumerate}
\end{proof}

\begin{lemma} \label{l:multDeg3Points}
  Let $C \subseteq  S$ be an irreducible curve. If $C$ is not contained in any plane $\pi 
  \subseteq  \P^3$,
  and $a_1,a_2,a_3 \in C$ are three distinct points on $C$,
  then $\sum_{i=1}^3 \mu_{a_i}(C) \leq  d(C)$.

  If $C$ is contained in a plane and is not a line, we still have 
  $\sum_{i=1}^2 \mu_{a_i}(C) \leq  d(C)$ for any two distinct points $a_1,a_2 \in 
  C$.
\end{lemma}
\begin{proof}
  First suppose $C$ is not contained in a plane,
  let $\pi \in \Pi$ be the plane containing $a_1,a_2,a_3$,
  and let $D$ be $\pi \cap S$.

  Then $d(C)$ is precisely the intersection product $C.D$; this can be seen 
  from first principles or by calculating with divisor classes (namely: $D$ 
  has class $3l - \sum_i e_i$, so if $C$ has class $al - \sum_i b_ie_i$ then, 
  by the formulae for the intersection pairing in 
  \cite[Proposition~V.4.8(b)]{Hart}, we have $C.D = 3a - \sum_i b_i = d(C)$).

  But by \cite[Exercise~V.3.2]{Hart},
  $C.D = \sum_{a \in A} \mu_a(C)\mu_a(D) \geq  \sum_i \mu_{a_i}(C)$,
  where $A$ is a set of generalised intersection points which includes 
  $a_1,a_2,a_3$.

  For the case that $C$ is contained in a plane $\pi_0$ and $a_1,a_2 \in C$ are 
  given, the same argument applies by taking $\pi$ to be any plane through 
  $a_1,a_2$ distinct from $\pi_0$.
\end{proof}

For $a \in S$, let $C_a$ be the intersection of $S$ with the tangent plane of $S$ at 
$a$ in $\P^3$,
$$C_a := T_a(S) \cap S.$$

\begin{lemma} \label{f:C_a}
  If $a \in S$ is good,
  then $C_a$ is irreducible, $d(C_a) = 3$,
  and $a$ is its unique singular point:
  $\mu_a(C_a) = 2$, and $\mu_b(C_a) = 1$ for $b \in C \setminus \{a\}$.
\end{lemma}
\begin{proof}
  $d(C_a) = 3$ by Fact~\ref{f:pic}(c,d).

  Let $C'$ be the irreducible component of $C_a$ containing $a$; this is 
  unique since $d(C_a) = 3$ and $a$ is good, so any irreducible component of $C_a$ containing $a$ has degree at least 2.
  Then $\mu_a(C') > 1$; this follows from \cite[Remark~V.3.5.2]{Hart}
  since $a$ is singular on $C'$ (or, more directly, from the proof of 
  \cite[Theorem~I.5.1]{Hart}).

  So by Lemma~\ref{l:multDeg3Points} applied with $a$ and any other point $b$ on 
  $C'$, we must have $\mu_a(C') = 2$, $\mu_b(C') = 1$, and $d(C') = 3$,
  and hence $C_a = C'$ is irreducible.
\end{proof}

\subsubsection{Geiser involutions}
Given a good point $a \in S$,
the \defn{Geiser involution} $\gamma_a$ through $a$ is the birational map 
$\gamma_a \in \operatorname{Bir}(S)$ defined generically on $S$ by setting $\gamma_a(x)$ to 
be the third point of intersection (counting multiplicity) with $S$ of the 
line in $\P^3$ through $a$ and $x$.

This $\gamma_a$ lifts to an automorphism of the blowup $\widetilde S_a$ of $S$ at $a$, 
$\widetilde \gamma_a \in \operatorname{Aut}(\widetilde S_a)$. Namely (as described in e.g.\ 
\cite[2.6.1]{CPR-fano} and \cite[4.1,Involutions.3]{BL-birationalThreefolds}), the 
morphism $\widetilde S_a \rightarrow  \P^2$ of linear projection through $a$ is a double 
covering, and $\widetilde \gamma_a$ is the corresponding involution which exchanges the 
sheets of this covering; in particular, it exchanges the exceptional curve of 
the blowup with the strict transform of $C_a$.
Correspondingly, $\gamma_a$ is defined on $S \setminus \{a\}$, and $\gamma_a(C_a \setminus 
\{a\}) = \{a\}$, and $\gamma_a$ restricts to a biregular self-bijection of $S 
\setminus C_a$.

By \cite[Proposition~V.3.2]{Hart}, we can identify $\Pic(\widetilde S_a)$ with $\Pic(S) 
\oplus \Z e_0 \cong  \Z^8$ where $e_0$ is the divisor class of the exceptional curve 
of the blowup. We record the following instance of 
\cite[Proposition~V.3.6]{Hart} in this context.

\begin{fact} \label{f:transDiv}
  If $C \subseteq  S$ is an irreducible curve with divisor class $d \in \Pic(S)$, and 
  the multiplicity of $a$ on $C$ is $\mu_p(C) = m \geq  0$,
  then the strict transform $\widetilde C$ has divisor class $d - me_0 \in \Pic(\widetilde S_a)$.
\end{fact}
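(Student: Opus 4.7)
The plan is to reduce this to the standard local description of divisor pullbacks under a point blowup. Write $\pi : \widetilde S_a \rightarrow S$ for the blowup morphism and $E$ for the exceptional divisor, so $e_0 = [E]$. Under the identification $\Pic(\widetilde S_a) = \Pic(S) \oplus \Z e_0$ from Proposition~V.3.2 of Hartshorne, the pullback $\pi^* : \Pic(S) \rightarrow \Pic(\widetilde S_a)$ is precisely the inclusion of the first factor. So it suffices to prove the equality of Weil divisors $\pi^* C = \widetilde C + m E$ on $\widetilde S_a$, since taking classes then gives $[\widetilde C] = \pi^* d - m e_0 = d - m e_0$.

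Away from $a$, the morphism $\pi$ is an isomorphism, so $\pi^* C$ and $\widetilde C$ agree on $\widetilde S_a \setminus E$, while $E$ contributes nothing there. Hence $\pi^* C - \widetilde C$ is supported on $E$, so equals $k E$ for some non-negative integer $k$, and the remaining task is to identify $k$ with $m$.

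For this I would choose local coordinates $(u,v)$ at $a$ in which $C$ is cut out by $f(u,v) = f_m(u,v) + f_{m+1}(u,v) + \ldots$, where each $f_i$ is homogeneous of degree $i$ and $f_m \neq 0$ — this is the defining property of $\mu_a(C) = m$. On the standard chart of the blowup with coordinates $(u,t)$ and $v = ut$, the exceptional divisor is cut out by $u = 0$, and a direct substitution gives
\[
\pi^* f(u,t) = u^m\bigl(f_m(1,t) + u\,f_{m+1}(1,t) + \ldots\bigr).
\]
Since $f_m \neq 0$, the factor in parentheses does not vanish identically along $\{u = 0\}$; away from $E$ it cuts out the preimage of $C \setminus \{a\}$, hence cuts out $\widetilde C$. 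The symmetric chart is handled analogously, giving $\pi^* C = \widetilde C + m E$, and taking divisor classes concludes. The only step requiring slight care is this identification of the residual equation with a local equation for the strict transform, which is just the content of the construction of the blowup as the closure of the graph of the rational map projecting from $a$.
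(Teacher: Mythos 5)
Your proof is correct. The paper itself does not give an argument: it labels this a ``Fact'' and simply cites \cite[Proposition~V.3.6]{Hart} (together with the identification $\Pic(\widetilde S_a) = \Pic(S) \oplus \Z e_0$ from \cite[Proposition~V.3.2]{Hart}), which states precisely that $\pi^*C = \widetilde C + \mu_a(C)E$. What you have done is re-derive that Hartshorne proposition from scratch: you correctly reduce the statement to the divisor identity $\pi^*C = \widetilde C + mE$, observe that the difference is supported on $E$ because $\pi$ is an isomorphism away from $a$, and then pin down the coefficient by the standard local computation on the two affine charts of the blowup, using the leading form $f_m$ of the local equation. The only minor point worth being careful about is that $f_m(1,t)$ is not identically zero precisely because $f_m \neq 0$ (this is automatic, but deserves a word); you implicitly use this to conclude that the residual factor is a local equation for $\widetilde C$ and does not contribute a further multiple of $E$. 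So the two proofs rest on the same key identity; the paper outsources it to Hartshorne, while you prove it directly. Nothing substantive is missing.
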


The following lemma is stated as \cite[p.457 (8.24)]{Dolgachev}, but we give a 
proof.

\begin{lemma} \label{l:geiserAut}
  Let $a \in S$ be good.
  The involution of $\Pic(\widetilde S_a)$ induced by $\widetilde \gamma_a$ is given by:
  \begin{align*} \widetilde \gamma_a^*(l) &= 8l - 3\sum_{j=0}^6e_j \\
  \widetilde \gamma_a^*(e_i) &= 3l - \sum_{j=0}^6e_j - e_i .\end{align*}
\end{lemma}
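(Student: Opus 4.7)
The plan is to pin down $\widetilde\gamma_a^*$ by computing it on $e_0,e_1,\ldots,e_6$ directly from the geometric description of $\widetilde\gamma_a$, and then deducing the value on $l$ from an invariance constraint. The key auxiliary input is the double cover $\pi: \widetilde S_a \to \P^2$ by linear projection from $a$: since $\widetilde\gamma_a$ is its covering involution, $\pi\circ\widetilde\gamma_a = \pi$ and so $\widetilde\gamma_a^*$ fixes the pullback of the line class $\lambda \in \Pic(\P^2)$. A line in $\P^2$ corresponds to a plane $H$ through $a$, so $\pi^{-1}(\lambda)$ is the strict transform of $S\cap H$, which for generic $H$ has multiplicity $1$ at $a$. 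Using Fact~\ref{f:pic}(c) and Fact~\ref{f:transDiv}, this gives
\[ \pi^*(\lambda) \;=\; \Big(3l - \sum_{i=1}^6 e_i\Big) - e_0 \;=\; 3l - \sum_{j=0}^6 e_j, \]
which is therefore fixed by $\widetilde\gamma_a^*$.

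Next I compute $\widetilde\gamma_a^*(e_0)$: as recalled in the excerpt, $\widetilde\gamma_a$ exchanges the exceptional divisor (class $e_0$) with the strict transform of $C_a = T_a(S)\cap S$. By Fact~\ref{f:C_a} the multiplicity $\mu_a(C_a) = 2$, and by Fact~\ref{f:pic}(c) the class of $C_a$ in $\Pic(S)$ is $3l-\sum_{i=1}^6 e_i$. Fact~\ref{f:transDiv} then yields
\[ \widetilde\gamma_a^*(e_0) \;=\; 3l - \sum_{i=1}^6 e_i - 2e_0 \;=\; 3l - \sum_{j=0}^6 e_j - e_0, \]
matching the claimed formula at $i=0$.

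For $i \in \{1,\ldots,6\}$, the class $e_i$ is represented by the exceptional line $L_i \subset S$. Let $\Pi_i$ be the plane spanned by $a$ and $L_i$; then $S\cap \Pi_i$ is a cubic curve containing $L_i$, so $S\cap \Pi_i = L_i \cup C_i$ for a residual plane conic $C_i$. Because $a$ is good, $a$ lies on no line of $S$, so $C_i$ cannot be a union of two lines of $S$ (else $\Pi_i \cap S$ would consist of three lines of $S$ and $a$ would lie on one of them); hence $C_i$ is an irreducible conic through $a$ with $\mu_a(C_i)=1$. For generic $x\in L_i$, the third point of $\overline{ax}\cap S$ lies in $\Pi_i\cap S$ but not in $L_i$ (else $\overline{ax} = L_i$, contradicting $a\notin L_i$), so it lies on $C_i$; thus $\gamma_a(L_i)=C_i$. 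Since $[C_i] = (3l - \sum_{j=1}^6 e_j) - e_i$ in $\Pic(S)$, Fact~\ref{f:transDiv} gives
\[ \widetilde\gamma_a^*(e_i) \;=\; (3l - \sum_{j=1}^6 e_j - e_i) - e_0 \;=\; 3l - \sum_{j=0}^6 e_j - e_i. \]

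Finally, $\widetilde\gamma_a^*(l)$ is forced by the invariance of $3l - \sum_{j=0}^6 e_j$. Summing the seven computed values gives $\sum_{j=0}^6 \widetilde\gamma_a^*(e_j) = 21l - 8\sum_{j=0}^6 e_j$, so
\[ 3l - \sum_{j=0}^6 e_j \;=\; 3\widetilde\gamma_a^*(l) - \Big(21l - 8\sum_{j=0}^6 e_j\Big), \]
from which $\widetilde\gamma_a^*(l) = 8l - 3\sum_{j=0}^6 e_j$. The only point that needs real care is the geometric step identifying $\gamma_a(L_i)$ with the residual conic $C_i$ and showing $\mu_a(C_i)=1$, but as argued above the goodness hypothesis on $a$ rules out the only possible degeneracy (namely $C_i$ being reducible), so no further casework is required.
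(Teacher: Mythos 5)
Your proof is correct, but it takes a genuinely different route from the paper's.
You both begin identically with the computation of $\widetilde\gamma_a^*(e_0)$, using that $\widetilde\gamma_a$ swaps the exceptional curve with the strict transform of $C_a$ and Fact~\ref{f:transDiv} together with $\mu_a(C_a)=2$.
After that the paths diverge.
The paper obtains $\widetilde\gamma_a^*(e_i)$ for $1\le i\le 6$ by invoking the Cayley--Bacharach description of the Geiser involution on the blow-up model $\widetilde S_a=\mathrm{Bl}_{p_0,\ldots,p_6}\P^2$: this description is manifestly symmetric in $p_0,\ldots,p_6$, so the formula already established for $e_0$ propagates to all $e_i$ at once.
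It then pins down $\widetilde\gamma_a^*(l)$ by the single algebraic constraint that $\widetilde\gamma_a^*$ is an involution.
You instead compute $\widetilde\gamma_a^*(e_i)$ for $i\ge 1$ by direct geometry on $S$: you identify $\gamma_a(L_i)$ with the residual conic $C_i$ of $S\cap\Pi_i=L_i\cup C_i$, use goodness of $a$ to rule out $C_i$ degenerating into lines (and to get $\mu_a(C_i)=1$), and then apply Fact~\ref{f:transDiv}.
For $\widetilde\gamma_a^*(l)$ you use a different constraint: invariance of the pullback $\pi^*(\lambda)=3l-\sum_{j=0}^6 e_j$ under the covering involution of $\pi:\widetilde S_a\to\P^2$.
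What each approach buys: the paper's is shorter and leans on a classical theorem (Cayley--Bacharach) and an abstract symmetry that makes the $e_i$-formulas ``free''; yours is more elementary and self-contained (it never leaves the surface $S$ and its blow-up, and avoids Cayley--Bacharach entirely), but requires case-checking that the residual conic is nondegenerate.
Your double-cover invariance argument for $l$ is also a nice alternative to the involution constraint, though both amount to a one-line linear-algebra step once the $e_j$-images are known.
One point worth making tighter: when you argue the third point of $\overline{ax}\cap S$ is not on $L_i$ for generic $x\in L_i$, the cleanest statement is that $\overline{ax}$ would then meet $L_i$ in two distinct points (namely $x$ and $\gamma_a(x)$, distinct for generic $x$), forcing $\overline{ax}=L_i$ and hence $a\in L_i$, contradicting goodness; what you wrote conveys the idea but the intermediate claim ``$\overline{ax}=L_i$'' should be derived from the two-point incidence rather than asserted directly.
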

\begin{proof}
  $\widetilde \gamma_a^*(e_0)$ is the divisor class of the strict transform of $C_a$.
  Now $C_a$ has divisor class $3l-\sum_{i=1}^6e_i$ in $\Pic(S)$ by 
  Fact~\ref{f:pic}(c),
  and $\mu_a(C_a) = 2$ by Lemma~\ref{f:C_a},
  so by Fact~\ref{f:transDiv},
  $\widetilde \gamma_a^*(e_0) = 3l - \sum_{i=1}^6e_i - 2e_0 = 3l - \sum_{i=0}^6e_i - 
  e_0$.

  Now let $p_0 \in \P^2$ be the image of $a$ under the morphism $S \rightarrow  \P^2$ of 
  blowing up $p_1,\ldots ,p_6$. So $\widetilde S_a$ is the blowup of $\P^2$ at 
  $p_0,\ldots ,p_6$.
  Since $a$ is good, it follows from \cite[Theorem~V.4.9]{Hart} that no three 
  of these seven points are collinear and no six lie on a common conic.

  Then $\widetilde \gamma_a$ can be described in terms of the Cayley-Bacharach theorem 
  \cite[Corollary~V.4.5]{Hart}: given an eighth point $p_7$, perhaps 
  infinitely near to (i.e.\ in the exceptional curve of) one of $p_0,\ldots ,p_6$, 
  by Cayley-Bacharach there is a unique ninth point $p_8$ (also possibly 
  infinitely near one of $p_0,\ldots ,p_6$) such that any cubic curve in $\P^2$ 
  passing through $p_0,\ldots ,p_7$ also passes through $p_8$. From the 
  description of the embedding of $S$ into $\P^3$ in 
  \cite[Corollary~V.4.7]{Hart}, it follows that $\widetilde \gamma_a(p_7) = p_8$.
  (See also \cite[Sections~7.3.5,8.7.2]{Dolgachev} for another exposition of 
  this description of Geiser involutions.)

  Now this description of $\widetilde \gamma_a$ is invariant under permuting 
  $p_0,\ldots ,p_6$, and it follows that also $\widetilde \gamma_a^*(e_i) = 3l - 
  \sum_{j=0}^6e_j - e_i$ for any $i=0,\ldots ,6$.

  Finally, since $\widetilde \gamma_a^*$ is an involution, we have
  \begin{align*} e_0 &= \widetilde \gamma_a^*(\widetilde \gamma_a^*(e_0)) \\
  &= 3\widetilde \gamma_a^*(l) - \sum_{j=0}^6\widetilde \gamma_a^*(e_j) - \widetilde \gamma_a^*(e_0) \\
  &= 3\widetilde \gamma_a^*(l) - (21l - 7 \sum_{j=0}^6e_j - \sum_{j=0}^6 e_j) -
  (3l - \sum_{j=0}^6e_j - e_0) \\
  &= 3\widetilde \gamma_a^*(l) - 24l + 9\sum_{j=0}^6e_j + e_0 ,\end{align*}
  so
  $\widetilde \gamma_a^*(l) = 8l - 3\sum_{j=0}^6e_j$.
\end{proof}

Let $a \in S$ be good, and define $\gamma_a^* : \Div(S) \rightarrow  \Div(S)$ to be the 
homomorphism defined on prime divisors by:
$\gamma_a^*(C_a) = 0$, and for any other irreducible curve $C \neq  C_a$,
$\gamma_a^*(C)$ is the irreducible curve with strict transform 
$\widetilde \gamma_a^*(\widetilde C)$, where $\widetilde C$ is the strict transform of $C$.
Note that for such $C \neq  C_a$,
we have set theoretically $\gamma_a(C \setminus \{a\}) \subseteq  \gamma_a^*(C)$,
since $\widetilde \gamma_a(\widetilde C) = ~\gamma_a^*(\widetilde C)$,
since $\widetilde \gamma_a$ is an involutive isomorphism.

\begin{lemma} \label{l:geiserCurve}
  If $a \in S$ is good,
  and $C \subseteq  S$ is an irreducible curve with divisor class $al - \sum_{i=1}^6 
  b_ie_i$,
  and $C \neq  C_a$,
  and $a$ has multiplicity $m \geq  0$ on $C$,
  then $\gamma_a^*(C)$ has class $(8a - 3\sum_{i=1}^6 b_i - 3m)l - 
  \sum_{i=1}^6 (3a - \sum_{j=1}^6 b_j - b_i - m)e_i$,
  and the multiplicity of $a$ on $\gamma_a^*(C)$ is $3a - \sum_{i=1}^6 b_i - 
  2m$.
\end{lemma}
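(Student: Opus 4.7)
The plan is to deduce this purely formally from Lemma \ref{l:geiserAut} and Fact \ref{f:transDiv}, using the observation (which follows immediately from the definition of $\gamma_a^*$ and the fact that $\widetilde\gamma_a$ is an involutive automorphism of $\widetilde S_a$) that the strict transform of $\gamma_a^*(C)$ in $\widetilde S_a$ is precisely $\widetilde\gamma_a^*(\widetilde C)$, where $\widetilde C$ is the strict transform of $C$.

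The computation proceeds as follows. First, by Fact \ref{f:transDiv}, $\widetilde C$ has class
\[ al - \sum_{i=1}^6 b_i e_i - m e_0 \in \Pic(\widetilde S_a). \]
Next, applying $\widetilde\gamma_a^*$ and using the formulae of Lemma \ref{l:geiserAut}, together with $\widetilde\gamma_a^*(e_0) = 3l - \sum_{j=0}^6 e_j - e_0$ (obtained from the symmetry of the Cayley-Bacharach description observed in the proof of Lemma \ref{l:geiserAut}, or by applying $\widetilde\gamma_a^*$ to the identity $e_0 = \widetilde\gamma_a^*(\widetilde\gamma_a^*(e_0))$ using the formula derived in that proof), one reads off the class of $\widetilde\gamma_a^*(\widetilde C)$ as a $\Z$-linear combination of $l, e_0, e_1, \ldots, e_6$. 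A straightforward bookkeeping exercise gives
\[ \widetilde\gamma_a^*(\widetilde C) = (8a - 3\textstyle\sum_i b_i - 3m)l - \sum_i (3a - \sum_j b_j - b_i - m)e_i - (3a - \sum_j b_j - 2m)e_0. \]
Finally, applying Fact \ref{f:transDiv} in the reverse direction (i.e.\ reading the $e_0$-coefficient as $-\mu_a(\gamma_a^*(C))$ and the remaining part as the class of $\gamma_a^*(C)$ in $\Pic(S)$) yields the stated formulae.

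There is no real obstacle here beyond careful bookkeeping; the only mildly non-trivial point to verify is that $\widetilde\gamma_a^*(\widetilde C)$ really is the strict transform of some irreducible curve in $S$, i.e.\ that it does not coincide with the exceptional divisor class $e_0$. But since $\widetilde\gamma_a$ interchanges the exceptional divisor with the strict transform of $C_a$, the class $e_0$ is the image under $\widetilde\gamma_a^*$ of the class of $\widetilde{C_a}$, and the hypothesis $C \neq C_a$ ensures $\widetilde C$ is not in that class, so $\widetilde\gamma_a^*(\widetilde C) \neq e_0$ as required.
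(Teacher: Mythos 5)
Your proposal is correct and follows essentially the same route as the paper: compute the class of $\widetilde C$ in $\Pic(\widetilde S_a)$ via Fact~\ref{f:transDiv}, push it forward with the formulae of Lemma~\ref{l:geiserAut}, and read off the class and $e_0$-coefficient via Fact~\ref{f:transDiv} again. The additional observation that $\widetilde\gamma_a^*(\widetilde C)\ne e_0$ (because $\widetilde\gamma_a$ exchanges $e_0$ with $\widetilde{C_a}$ and $C\ne C_a$) is harmless extra care, already implicit in the paper's definition of $\gamma_a^*$.
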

\begin{proof}
  By Fact~\ref{f:transDiv} and Lemma~\ref{l:geiserAut},
  the class of $\widetilde \gamma_a^*(\widetilde C)$ is
  \begin{align*} &a(8l - 3\sum_{i=0}^6e_i) - \sum_{i=1}^6 b_i(3l - \sum_{j=0}^6e_j - 
  e_i) - m(3l - \sum_{j=0}^6e_j - e_0)
  \\= &(8a - 3\sum_{i=1}^6b_i - 3m)l - \sum_{i=1}^6 (3a - \sum_{j=1}^6b_j - 
  b_i - m) e_i - (3a - \sum_{i=1}^6b_i - 2m)e_0 ,\end{align*}
  and we conclude by Fact~\ref{f:transDiv}.
\end{proof}

For later use, we record the following consequence, which follows immediately 
by Fact~\ref{f:pic}(d).
\begin{corollary} \label{c:geiserCurveMD}
  If $a \in S$ is good,
  and $C \subseteq  S$ is an irreducible curve and $C \neq  C_a$,
  then
  $$
  d(\gamma_a^*(C)) = 2d(C) - 3\mu_a(C)
  ;\;
  \mu_a(\gamma_a^*(C)) = d(C)-2\mu_a(C)
  .$$
\end{corollary}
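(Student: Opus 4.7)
The plan is a direct calculation: substitute the divisor class formula from Lemma~\ref{l:geiserCurve} into the degree formula of Fact~\ref{f:pic}(d). Since the Corollary is claimed to follow ``immediately,'' there is no real obstacle; the only thing to check is that the arithmetic collapses as advertised.

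Write the class of $C$ as $al - \sum_{i=1}^6 b_i e_i$, set $m := \mu_a(C)$, and note that $d(C) = 3a - \sum_{i=1}^6 b_i$ by Fact~\ref{f:pic}(d). By Lemma~\ref{l:geiserCurve}, $\gamma_a^*(C)$ has class
\[
(8a - 3\textstyle\sum_i b_i - 3m)\,l \;-\; \sum_i(3a - \sum_j b_j - b_i - m)\,e_i,
\]
and the multiplicity of $a$ on $\gamma_a^*(C)$ is $3a - \sum_i b_i - 2m$. The second quantity is immediately $d(C) - 2m = d(C) - 2\mu_a(C)$, which gives the multiplicity formula at once.

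For the degree formula, apply Fact~\ref{f:pic}(d) again: the degree of $\gamma_a^*(C)$ is
\[
3(8a - \textstyle\sum_i 3b_i - 3m) \;-\; \sum_i(3a - \sum_j b_j - b_i - m).
\]
Expanding, the $l$-coefficient contributes $24a - 9\sum_i b_i - 9m$, and the six $e_i$-terms contribute $-18a + 6\sum_j b_j + \sum_i b_i + 6m$. Summing yields $6a - 2\sum_i b_i - 3m = 2(3a - \sum_i b_i) - 3m = 2d(C) - 3\mu_a(C)$, as required. The only place where one needs to be careful is the bookkeeping of the $-\sum_i b_i$ term coming out of the double sum $\sum_i \sum_j b_j$, but once one tracks the $i$-index correctly the cancellation is forced.
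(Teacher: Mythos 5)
Your proof is correct and follows exactly the same route as the paper: substitute the divisor class formula of Lemma~\ref{l:geiserCurve} into the degree formula of Fact~\ref{f:pic}(d) and simplify. The arithmetic checks out, and your bookkeeping of the $\sum_i\sum_j b_j$ double sum matches the paper's intermediate step $6a - 2\sum_i b_i - 3m$.
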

\begin{proof}
  With notation as in the previous Lemma,
  \begin{align*} d(\gamma_a^*(C)) &= 3(8a - 3\sum_{i=1}^6 b_i - 3m) - \sum_{i=1}^6 (3a 
  - \sum_{j=1}^6 b_j - b_i - m)
  \\&= 6a - \sum_{i=1}^62b_i - 3m
  \\&= 2d(C) - 3m .\end{align*}
  Meanwhile, $\mu_a(\gamma_a^*(C)) = 3a - \sum_{i=1}^6 b_i - 2m = d(C) - 2m$.
\end{proof}

We use the following corollary in the main text.
\begin{corollary}\label{c:curvesCoplanar}
  Let $a_1,a_2,a_3 \in S$ be collinear good points with $\trd^0(a_i) = 1 = 
  \trd^0(a_i/a_j)$ for $i\neq j$. Then $a_1,a_2,a_3$ lie on an 
  $\acl^0(\emptyset)$-constructible plane.
\end{corollary}
\begin{proof}
  For $i=1,2$, let $C_i := \locus^0(a_i)$, an irreducible curve over 
  $\acl^0(\emptyset)$.
  Then $\gamma_{a_3}^*(C_1) = C_2$, and $C_i \neq C_{a_3}$.
  We have $\mu_{a_3}(C_i) \leq  1$ since $\trd^0(a_3) > 0$.
  Analysing the possibilities in Corollary~\ref{c:geiserCurveMD},
  we must have $\mu_{a_3}(C_i) = 1$ and $\deg(C_i) = 3$.
  Then $C_1 \cap C_2 \ni a_3$, so $C_1 = C_2$ since $\trd^0(a_3) > 0$.

  Then by Lemma~\ref{l:geiserCurve},
  if $al-\sum_ib_ie_i$
  is the divisor class of $C := C_1 = C_2$,
  then $a = 3d - a - 3m = 6-a$ so $a = 3$,
  and $b_i = d - b_i - m = 2 - b_i$ so $b_i = 1$.
  So by Fact~\ref{f:pic}(c), $C$ is a plane cubic curve in $S$.
  We conclude that the $\acl^0(\emptyset)$-constructible plane $\pi$ spanned 
  by $C$ contains $a_1,a_2,a_3$, as required.
\end{proof}

\begin{lemma} \label{l:multImage}
  Let $a \in S$ be good.
  Let $b \in S \setminus \{a\}$.
  Let $C \subseteq  S$ be an irreducible curve, with $C \neq  C_a$.
  Then $\mu_{\gamma_a(b)}(\gamma_a^*(C)) \geq  \mu_b(C)$,
  and we have equality if $b \notin C_a$.
\end{lemma}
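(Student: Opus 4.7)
The plan is to trace multiplicities through the biregular lift $\widetilde\gamma_a \in \operatorname{Aut}(\widetilde S_a)$, exploiting that the blowup $\pi \colon \widetilde S_a \to S$ preserves multiplicities away from the exceptional curve $E$ and bounds them from above at points of $E$.

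Since $b \neq a$, I let $\widetilde b$ be the unique preimage of $b$ in $\widetilde S_a$, and set $\widetilde c := \widetilde \gamma_a(\widetilde b)$. Writing $\widetilde C$ for the strict transform of $C$, the map $\pi$ is an isomorphism near $\widetilde b$, so $\mu_{\widetilde b}(\widetilde C) = \mu_b(C)$; and since $\widetilde\gamma_a$ is biregular it preserves multiplicities, giving $\mu_{\widetilde b}(\widetilde C) = \mu_{\widetilde c}(\widetilde \gamma_a(\widetilde C))$. By the very definition of $\gamma_a^*(C)$, the curve $\widetilde\gamma_a(\widetilde C) = \widetilde\gamma_a^*(\widetilde C)$ is precisely the strict transform of $\gamma_a^*(C)$.

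Next, I would identify $\pi(\widetilde c)$ with $\gamma_a(b)$. Recall that $\widetilde\gamma_a$ exchanges $E$ with the strict transform $\widetilde{C_a}$ of $C_a$. If $b \notin C_a$ then $\widetilde b \notin \widetilde{C_a}$, so $\widetilde c \notin E$, and the induced biregular self-bijection of $\widetilde S_a \setminus (E \cup \widetilde{C_a})$ descends via $\pi$ to the biregular self-bijection of $S \setminus C_a$ given by $\gamma_a$, forcing $\pi(\widetilde c) = \gamma_a(b)$. If instead $b \in C_a \setminus \{a\}$, then $\widetilde b \in \widetilde{C_a}$ forces $\widetilde c \in E$ and hence $\pi(\widetilde c) = a = \gamma_a(b)$.

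Finally, I compare $\mu_{\widetilde c}$ on the strict transform of $\gamma_a^*(C)$ with $\mu_{\gamma_a(b)}(\gamma_a^*(C))$ itself. When $\widetilde c \notin E$ (i.e.\ $b \notin C_a$), $\pi$ is an isomorphism near $\widetilde c$, giving equality and thereby the ``equality'' clause of the lemma. When $\widetilde c \in E$ (so $\pi(\widetilde c) = a$), I invoke the bound $\mu_p(\widetilde D) \leq \mu_a(D)$ for any irreducible curve $D \subseteq S$ and any $p \in E$, applied to $D := \gamma_a^*(C)$, to obtain the required inequality. This bound is the only content-bearing point of the argument; I would derive it from Fact~\ref{f:transDiv} together with the standard identities $E \cdot E = -1$ and $\pi^*(D) \cdot E = 0$, which yield $\widetilde D \cdot E = \mu_a(D)$, and then the pointwise estimate $i_p(\widetilde D, E) \geq \mu_p(\widetilde D)\,\mu_p(E) = \mu_p(\widetilde D)$ (using smoothness of $E$) forces $\mu_p(\widetilde D) \leq \widetilde D \cdot E$. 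Everything else is bookkeeping about the blowup.
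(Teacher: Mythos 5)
Your argument is correct and takes a genuinely different route from the paper's. Where the paper stays on $S$ and bounds the intersection product $C.C_a \geq \mu_a(C)\mu_a(C_a) + \mu_b(C)\mu_b(C_a) = 2\mu_a(C) + \mu_b(C)$, then reads off $\mu_a(\gamma_a^*(C)) = d(C) - 2\mu_a(C)$ from Corollary~\ref{c:geiserCurveMD}, you move to the blowup $\widetilde S_a$ and perform the ``dual'' calculation there: transporting everything by the biregular $\widetilde\gamma_a$ (which preserves multiplicities on the nose), you reduce to the bound $\mu_p(\widetilde D) \leq \widetilde D\cdot E = \mu_a(D)$ for a point $p$ on the exceptional curve, where $D := \gamma_a^*(C)$. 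The paper's computation intersects $C$ with $C_a$; yours intersects $\widetilde D$ with $E$. Both end up invoking the same local intersection estimate $i_q(X,Y) \geq \mu_q(X)\mu_q(Y)$, but on different surfaces and for different pairs of curves. Your version is arguably tidier conceptually: the equality case is automatic because $\widetilde\gamma_a$ is always an isomorphism, and the inequality case is isolated as a single clean lemma about multiplicities of a curve at points of an exceptional divisor. The paper's version avoids explicit mention of the blowup in the main argument (all the blowup data is already packaged into Corollary~\ref{c:geiserCurveMD}), at the cost of an extra intermediate quantity $\mu_a(C)$ in the estimate. Two minor points you might tighten if writing this up: you should note explicitly that $\widetilde D \neq E$ (needed so that $\widetilde D\cdot E$ decomposes as a sum of finite nonnegative local intersection numbers), which holds since $\gamma_a^*(C)$ is a curve in $S$ so its strict transform is not the exceptional divisor; and the identity $\widetilde D\cdot E = \mu_a(D)$ could just as well be cited directly from Fact~\ref{f:transDiv} together with the intersection pairing formula from \cite[Proposition~V.4.8(b)]{Hart}, rather than re-derived.
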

\begin{proof}
  If $b \notin C_a$ then $\gamma_a$ is biregular at $b$, i.e.\ it restricts to an 
  isomorphism on a Zariski neighbourhood of $b$, and the equality follows from 
  the definition of $\mu$.

  Otherwise, $b \in C_a \setminus \{a\}$, and by Lemma~\ref{f:C_a} we have $\mu_b(C_a) = 1$ 
  and $\mu_a(C_a) = 2$. Then as in the proof of Lemma~\ref{l:multDeg3Points},
  $$ d(C) = C.C_a \geq  \mu_a(C)\mu_a(C_a) + \mu_b(C)\mu_b(C_a) = 2\mu_a(C) + 
  \mu_b(C).$$
  But then by Corollary~\ref{c:geiserCurveMD} we have
  \[\mu_{\gamma_a(b)}(\gamma_a^*(C)) = \mu_a(\gamma_a^*(C)) = d(C) - 2\mu_a(C) 
  \geq  \mu_b(C).\qedhere\]
\end{proof}

\subsection{Fixed points of compositions}
\label{s:compositions}

\providecommand{\gei}{\gamma}

From now on we will write $\gei_{x_n}\circ 
\gei_{x_{n-1}}\circ\cdots\circ\gei_{x_1}(x_0)$ as $x_0x_1\cdots x_n$ for good 
points $x_1,\ldots ,x_n$ and any point $x_0$ on the cubic surface $S$. We say 
$x_0x_1\cdots x_n$ is well-defined if $x_0\neq x_1$, $x_0x_1\neq x_2$ and so
on, and hence $x_0$ is in the domain of definition of the composition of
rational maps $\gei_{x_n}\circ \gei_{x_{n-1}}\circ\cdots\circ\gei_{x_1}$. 
We will use without comment the following properties,
which follow immediately from the properties of Geiser involutions discussed above:
\begin{itemize}\item $xy$ is well defined if $y$ is good and $x\neq y$.
\item $xy = yx$ if $x \neq y$ and $x, y$ are good.
\item $xyz = (xy)z$ if $y, z$ are good and $x\ne y$ and $xy\ne z$.
\item $xyy = x$ if $x \neq y$, $y \neq xy$ and $y$ is good.
\item If $y$ is good and $x\neq y$, then $xy=y$ if and only if $x \in C_y$.
\end{itemize}

Note that if $R_S'(x_1,x_2,x_3)$ holds, then $x_ix_j=x_k$ for 
$\{i,j,k\}=\{1,2,3\}$.

The following notion of a strongly $(a_1,\ldots ,a_n)$-fixed point with $n=4$ 
is closely connected to the energy relation $E_S$ derived from the 
collinearity relation on $S$.
 Analysing the possible curves consisting of strongly $(a,b,c,d)$-fixed points for sufficiently generic points $a,b,c,d\in S$ in 
Theorem~\ref{t:coplanar} allows us to deduce in Theorem~\ref{t:coplanarK2s} information about complete bipartite graphs $K_{2,s}$ contained in $E_S$, which is crucial in obtaining our main result in the irreducible case, Theorem~\ref{t:mainOrchardIrred}.

\begin{definition}\label{d:stronglyFixed}
  Given points $a_i \in S$,
  a \defn{strongly $(a_1,\ldots ,a_n)$-fixed point}
  is an $x \in S$
  such that $xa_1\ldots a_n = x$,
  and for all $0 \leq i<n$, $R_S'(xa_1\ldots a_i,a_{i+1},xa_1\ldots a_{i+1})$ holds 
  (where in the case $i=0$, $xa_1\ldots a_i$ denotes $x$).
\end{definition}

Note that any strongly $(a_1,\ldots ,a_n)$-fixed point
is also a strongly $(a_n,\ldots ,a_1)$-fixed point.

\begin{theorem}\label{t:coplanar}
  Suppose we have four points, $a,b,c,d$, in a smooth cubic surface $S$ with 
  $\trd^0(a,b)=\trd^0(c,d)=4$ and $\trd^0(a,b,c,d)>4$.
  Let $F$ be the constructible set of strongly $(a,b,c,d)$-fixed points in $S$.

  Suppose $\ell \subseteq  S$ is an irreducible
  $\acl^0(a,b,c,d)$-constructible 
  curve which has infinite intersection with $F$.

  Then $\ell$ and the points $a,b,c,d$ all lie on a common plane.
\end{theorem}

First, we derive the corollary which we use in the main text.
\begin{corollary} \label{c:fixedCurves}
  Let $a,b,c,d \in S$ and $F$ be as in Theorem~\ref{t:coplanar}.

  If $F$ is infinite, then $a,b,c,d$ span a plane $\pi \in \Pi$, and $F 
  \setminus \pi$ is finite.
\end{corollary}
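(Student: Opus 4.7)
The plan is to deduce Corollary~\ref{c:fixedCurves} from Theorem~\ref{t:coplanar} by decomposing $\overline F$ into irreducible components. Since $F$ is $\acl^0(a,b,c,d)$-\cstr/, so is $\overline F$ and each of its (finitely many) irreducible components.

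The first step is to rule out any $2$-dimensional component, i.e.\ $\overline F = S$. Assuming for contradiction that $F$ is Zariski-dense in $S$, I would select a plane $\pi' \in \Pi$ defined over $\acl^0(a,b,c,d)$ such that $\pi' \cap S$ is irreducible and $\pi' \not\supseteq \{a,b,c,d\}$; both conditions fail only on proper subvarieties of $\Pi$ (a generic hyperplane section of the smooth irreducible surface $S$ is irreducible by Bertini's theorem), and the nonempty complement has a point over the algebraically closed field $\acl^0(a,b,c,d)$. Then $\ell := \pi' \cap S$ is an irreducible $\acl^0(a,b,c,d)$-\cstr/ plane cubic, and $\ell \cap F$ is infinite since $F$ is dense in $S$. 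Theorem~\ref{t:coplanar} would then force $\ell \cup \{a,b,c,d\}$ to be coplanar; but $\ell$ is a plane cubic, so its only containing plane is $\pi'$, contradicting the choice of $\pi'$.

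It follows that every irreducible component of $\overline F$ has dimension at most $1$. Since $F$ is infinite, at least one component $V$ is $1$-dimensional, and because $F$ is dense in $\overline F$ it is also dense in each irreducible component, so $V \cap F$ is infinite. Applying Theorem~\ref{t:coplanar} to every such $V$ yields a plane $\pi_V \supseteq V \cup \{a,b,c,d\}$. To match these planes, I would verify that $a,b,c,d$ are not collinear: the line through the generic pair $a, b \in S$ is not one of the finitely many lines on $S$ (Fact~\ref{f:lines}) and so meets $S$ in $\{a, b, \gamma_a(b)\}$ with $\gamma_a(b) \in \acl^0(a,b)$; collinearity of all four points would force $c, d \in \acl^0(a, b)$ and hence $\trd^0(a,b,c,d) = 4$, contradicting the hypothesis. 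Thus the plane $\pi$ through $\{a,b,c,d\}$ is unique, every $\pi_V$ equals $\pi$, and the remaining components of $\overline F$ are $0$-dimensional; in particular $\overline F \setminus \pi$, and \emph{a fortiori} $F \setminus \pi$, is finite. The only substantive step is ruling out the $2$-dimensional case; the rest is routine bookkeeping on the components of $\overline F$.
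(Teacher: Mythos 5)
Your argument follows essentially the same path as the paper's: split on whether $F$ has Zariski dimension $2$ or $1$, use Theorem~\ref{t:coplanar} on a suitable irreducible $\acl^0(a,b,c,d)$-\cstr/ curve to derive a contradiction in the first case, and apply it component-by-component in the second. Your explicit verification that $a,b,c,d$ are not collinear is a useful addition — the paper leaves it implicit — and your reasoning for it is correct: if they were collinear, then since $a$ is good (generic) the line through $a,b$ is not in $S$, so it meets $S$ only in $a,b,ab$, forcing $c,d\in\acl^0(a,b)$ and hence $\trd^0(a,b,c,d)=4$, contradicting the hypothesis.

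There is, however, one imprecision in the two-dimensional case that you should patch. You assert that $\ell\cap F$ is infinite ``since $F$ is dense in $S$.'' Density alone does not give this: a constructible $F$ that is Zariski-dense in $S$ can still miss a given irreducible curve entirely (e.g.\ $F=S\setminus\ell$). What you actually need is that $\ell$ is not one of the finitely many irreducible curves covering the closed set $\overline{S\setminus F}$; since those are finitely many and $\Pi$ is $3$-dimensional, this is a third generic open condition on $\pi'$, compatible with your other two, so the argument goes through once you add it. The paper handles this point explicitly (``$S\setminus F$ is contained in a finite union of curves … and we can choose $\ell$ which is none of these curves''). With that one sentence added, your proof is correct and matches the paper in substance, differing only cosmetically (you decompose $\overline F$ into irreducible components rather than peeling off a finite set $Z$ as the paper does).
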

\begin{proof}
  As $F\subseteq S$, we have $\dim F\leq \dim S=2$.
  First suppose $\dim(F) = 2$.
  Let $\pi$ be the plane spanned by $a,b,c,d$ if they are coplanar, and set $\pi := \emptyset $ otherwise.
  Let $\ell \subseteq  S$ be an irreducible curve over $\acl^0(a,b,c,d)$ which is not 
  contained in $\pi$ and intersects $F$ in an infinite set; to see that such 
  exists, note that $S \setminus F$ is contained in a finite union of curves (since 
  $S$ is irreducible), and we can choose $\ell$ which is none of these curves.
  But by Theorem~\ref{t:coplanar}, $a,b,c,d$ are coplanar and $\ell \subseteq  \pi$, 
  contradicting the choice of $\ell$.

  So $\dim(F) = 1$. Since $F$ is $\acl^0(a,b,c,d)$-{\cstr}, there 
  is a finite set of points $Z$ such that $F \setminus Z$ is contained in a finite 
  union of irreducible curves $\ell_i$, with each $\ell_i$ 
  $\acl^0(a,b,c,d)$-{\cstr} and having infinite intersection with $F$.
  By Theorem~\ref{t:coplanar} again, $a,b,c,d$ span a plane $\pi$ and each $\ell_i 
  \subseteq  \pi$, so $F \setminus \pi \subseteq  Z$ is finite as required.
\end{proof}

The rest of this subsection is devoted to the proof of Theorem~\ref{t:coplanar}.
Let $a,b,c,d$ and $\ell$ satisfy the assumptions in Theorem~\ref{t:coplanar}. 

Let $\ell_{d,a}:=\ell$ and let
$\ell_{a,b} := \gei_a^*(\ell_{d,a})$,
and $\ell_{b,c}:=\gei_b^*(\ell_{a,b})$,
and $\ell_{c,d}:=\gei_c^*(\ell_{b,c})$.
We represent the situation schematically in the following diagram.
(Note that we do not actually yet know that $a$ lies on $\ell_{d,a}$ and so
on.)

\begin{center}
\begin{tikzpicture}[scale=0.8]
\draw[gray, thick] (-2,-3) -- (-2,3);
\draw[gray, thick] (2,-3) -- (2,3);
\draw[gray, thick] (-3,2) -- (3,2);
\draw[gray, thick] (-3,-2) -- (3,-2);
\filldraw[black] (-2,-2) circle (1.5pt) node[anchor=north east]{$a$};
\filldraw[black] (-2,2) circle (1.5pt) node[anchor=south east]{$d$};
\filldraw[black] (2,2) circle (1.5pt) node[anchor=south west]{$c$};
\filldraw[black] (2,-2) circle (1.5pt) node[anchor=north west]{$b$};
\path (-2,-3.2) node(1){$\ell_{d,a}$};
\path (3.3,-2) node(1){$\ell_{a,b}$};
\path (2,3.2) node(1){$\ell_{b,c}$};
\path (-3.3,2) node(1){$\ell_{c,d}$};
\end{tikzpicture}
\end{center}

\begin{lemma} \mbox{} \label{l:ellBasics}
  \begin{enumerate}[(i)]\item
  $\ell_{d,a}, \ell_{a,b}, \ell_{b,c}, \ell_{c,d}$ are irreducible curves.
  \item Each $\ell_{x,y}$ is equal to neither $C_x$ nor $C_y$, and is not a line.
  \item
  $\ell_{a,b}$ contains infinitely many strongly $(b,c,d,a)$-fixed points,
  and\\
  $\ell_{b,c}$ contains infinitely many strongly $(c,d,a,b)$-fixed points,
  and\\
  $\ell_{c,d}$ contains infinitely many strongly $(d,a,b,c)$-fixed points.
  \item
  $\gei_d^*(\ell_{c,d}) = \ell_{d,a}$, and $\gei_d^*(\ell_{d,a}) = \ell_{c,d}$,
  and similarly $\gei_a^*(\ell_{a,b}) = \ell_{d,a}$ etc.\ 
  \end{enumerate}
\end{lemma}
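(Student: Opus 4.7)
The plan is to prove (i)--(iii) together by an inductive bootstrap around the four-cycle, driven by a cyclic-shift property of strongly fixed points, and then to deduce (iv) from the composition identity satisfied on the infinite fixed-point set.

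The main technical ingredient is the following cyclic-shift observation: if $x\in\ell_{d,a}$ is strongly $(a,b,c,d)$-fixed, then $y:=xa=\gamma_a(x)$ is strongly $(b,c,d,a)$-fixed. This is a direct check, since each of the four $R_S'$-conditions for $y$ relative to $(b,c,d,a)$ coincides with one of the $R_S'$-conditions for $x$ relative to $(a,b,c,d)$, and the fixed-point equation $ybcda=y$ reduces to $xabcda=xa$, which follows from $xabcd=x$. Note that any strongly $(a,b,c,d)$-fixed $x$ is automatically good (appearing as the first coordinate in $R_S'(x,a,xa)$) and cannot lie in $C_a\setminus\{a\}$ (otherwise $xa=a$ would violate distinctness in $R_S'(x,a,xa)$), so $\gamma_a$ is biregular at $x$. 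Hence the infinitely many such $x$ on $\ell_{d,a}$ map injectively to a Zariski-dense subset of $\ell_{a,b}=\gamma_a^*(\ell_{d,a})$, all of whose images are strongly $(b,c,d,a)$-fixed; iterating around the cycle gives (iii).

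Part (ii) now follows from the distinctness observation above applied on each curve: each $\ell_{x,y}$ contains infinitely many good points (the strongly fixed ones), none of which lie in $C_x\setminus\{x\}$, and by the cyclic symmetry of the definition (``$(a_1,\ldots,a_n)$-fixed'' equals ``$(a_n,\ldots,a_1)$-fixed'') none lie in $C_y\setminus\{y\}$ either, so $\ell_{x,y}\neq C_x,C_y$; meanwhile, the fact that no point on a line contained in $S$ is good (Fact~\ref{f:lines}) rules out $\ell_{x,y}$ being a line. For (i), irreducibility is preserved at each step because $\gamma_x^*$ is induced by the biregular involution $\widetilde\gamma_x$ of $\widetilde S_x$ and takes irreducible curves distinct from $C_x$ to irreducible curves, and the requisite distinctness from $C_x$ is exactly what (ii) supplies.

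For (iv), on the Zariski-dense infinite set of strongly $(a,b,c,d)$-fixed points of $\ell_{d,a}$ the rational composition $\gamma_d\gamma_c\gamma_b\gamma_a$ is the identity, so $\gamma_c\gamma_b\gamma_a$ and $\gamma_d$ agree generically on $\ell_{d,a}$. Taking closures of images, the irreducible curves $\ell_{c,d}=\gamma_c^*\gamma_b^*\gamma_a^*(\ell_{d,a})$ and $\gamma_d^*(\ell_{d,a})$ share infinitely many points and so are equal; applying the involution $\gamma_d^*$ (valid since $\ell_{c,d}\neq C_d$) yields $\gamma_d^*(\ell_{c,d})=\ell_{d,a}$, and the relations $\gamma_a^*(\ell_{a,b})=\ell_{d,a}$ etc.\ follow directly from the involutivity of $\gamma_a^*$ on curves $\neq C_a$. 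The main delicate point in all of this is the interlocking of (i), (ii), and (iii) around the cycle: each invocation of $\gamma_x^*$ requires the previous curve to be distinct from $C_x$, and that distinctness comes only from the cyclic-shift having placed infinitely many strongly fixed points on that curve, so one must perform the four-cycle bookkeeping in the right order.
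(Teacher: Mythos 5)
Your proposal is correct and follows essentially the same route as the paper's proof: the cyclic-shift observation (that $x\mapsto xa$ sends strongly $(a,b,c,d)$-fixed points on $\ell_{d,a}$ to strongly $(b,c,d,a)$-fixed points on $\ell_{a,b}$) is the engine that simultaneously supplies the distinctness from $C_x,C_y$ needed to invoke $\gamma_x^*$ at each step and the infinite fixed-point sets used to prove (iii) and (iv). The only nit is that the line-avoidance in (ii) rests on the goodness condition in the definition of $R_S'$ (Definition~\ref{d:good}), not on Fact~\ref{f:lines}, but the substance is right.
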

\begin{proof}
  First note that $\ell_{d,a} \neq  C_a$,
  by the definition of strongly fixed points,
  so $\ell_{a,b}$ is an irreducible curve.
  Similarly $\ell_{d,a} \neq  C_d$,
  and $\ell_{d,a}$ is not a line by the goodness assumption in the definition 
  of $R_S'$.

  If $x$ is a strongly $(a,b,c,d)$-fixed point in $\ell_{d,a}$,
  then $x \in \ell_{d,a} \setminus C_a$,
  and $xa$ is a strongly $(b,c,d,a)$-fixed point in $\ell_{a,b}$.
  Now $\gei_a$ is injective on $S \setminus C_a$,
  so $\ell_{a,b}$ contains infinitely many strongly $(b,c,d,a)$-fixed points.

  The remaining cases of (i)-(iii) follow by repeating this argument.
  Finally, we also obtain in this way that $\gei_d^*(\ell_{c,d})$ is irreducible,
  and that if $x$ is a strongly $(a,b,c,d)$-fixed point in $\ell_{d,a}$
  then $x = (xabc)d \in \gei_d^*(\ell_{c,d})$.
  So $\ell_{d,a}$ and $\gei_d^*(\ell_{c,d})$ are two irreducible curves with infinite intersection,
  hence they are equal.
  Then since $\gei_d^*$ is an involution, also $\gei_d^*(\ell_{d,a}) = \ell_{c,d}$.
  The remaining cases of (iv) follow symmetrically.
\end{proof}

We prove Theorem~\ref{t:coplanar} by contradiction:
\begin{assumption} \label{a:nonplanar}
  $a,b,c,d$ and $\ell_{d,a}$ do not all lie on a common plane.
\end{assumption}

Note that it follows that also $a,b,c,d$ and $\ell_{a,b}$ are not co-planar, and so on.
This will allow us to argue ``by symmetry'' below.

We will prove a sequence of lemmas which gradually refine our picture of the situation,
until we eventually arrive at a contradiction.

\begin{lemma}\label{fixCl}
Let $F$ be the set of strongly $(a,b,c,d)$-fixed points.
  If $xabcd$ is well-defined and $x\in
\cl^{Zar}_S(F)$, then $xabcd=x$.
\end{lemma}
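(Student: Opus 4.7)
The plan is to observe that the condition ``$xabcd = x$'' is Zariski closed on the open locus where the composition $\gei_d\circ\gei_c\circ\gei_b\circ\gei_a$ is well-defined, and that $F$ is already contained in that closed subset of that open locus, so its Zariski closure is too.

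More precisely, first let $U \subseteq S$ be the Zariski-open subset on which the composition of rational maps $\phi := \gei_d \o \gei_c \o \gei_b \o \gei_a$ restricts to a morphism $\phi : U \rightarrow  S$; this is exactly the set of $x \in S$ for which $xabcd$ is well-defined. Then let
\[
Z := \{\, x \in U : \phi(x) = x \,\},
\]
which is Zariski closed in $U$, being the preimage of the diagonal $\Delta_S \subseteq  S \times S$ under the morphism $(\phi,\mathrm{id}) : U \rightarrow  S \times S$.

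The second step is to note $F \subseteq  Z$. Indeed, by definition a strongly $(a,b,c,d)$-fixed point $x$ satisfies that $xa$, $xab$, $xabc$, $xabcd$ are successively defined and lie in the appropriate $R_S'$-triples, which in particular means $x \in U$; and of course $\phi(x) = xabcd = x$, so $x \in Z$.

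Finally, since $Z$ is closed in $U$ and contains $F$, we have $\cl^{Zar}_S(F) \cap U \subseteq  Z$ (using that a closed subset of $U$ is the trace on $U$ of a closed subset of $S$, so $\cl^{Zar}_U(F) = \cl^{Zar}_S(F) \cap U$). Hence any $x \in \cl^{Zar}_S(F)$ for which $xabcd$ is well-defined lies in $U$, and therefore in $Z$, giving $xabcd = x$. There is no real obstacle: the argument is just a clean separation of the closed ``equation'' part from the open ``domain of definition'' part of the definition of $F$.
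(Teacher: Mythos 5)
Your proof is correct and uses essentially the same argument as the paper's: take the open domain of definition of the composed rational map, observe that the fixed-point locus is closed there (as the preimage of the diagonal under a morphism to $S\times S$), note that $F$ lands inside it, and then use compatibility of Zariski closure with passing to an open subset. One tiny overstatement: your $U$ need only \emph{contain} the set of $x$ for which $xabcd$ is well-defined step by step (the domain of definition of the composed rational map could in principle be larger), but since the argument only uses the containment in that direction, this does not affect the proof.
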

\begin{proof}
  Let $Z\subseteq S$ be the domain of definition of the rational map
  $\gei_d\circ\gei_c\circ\gei_b\circ\gei_a:S\to S$. Consider the map $f:Z\to S\times
  S$ given by $f(z)=(zabcd,z)$. Clearly $f$ is
  a regular map on $Z$. Hence, as $\Delta_S:=\{(x,x):x\in S\}$ is
  Zariski-closed in $S$, we get that $f^{-1}(\Delta_S)\supseteq F$ is Zariski-closed in $Z$. As the Zariski topology on $Z$ coincides with the subspace topology induced from the Zariski topology on $S$, we have $f^{-1}(\Delta_S)\supseteq\cl^{Zar}_{Z}(F)=\cl^{Zar}_{S}(F)\cap Z$. By assumption, $x\in Z$ and $x\in \cl^{Zar}_{S}(F)$, so $x\in f^{-1}(\Delta_S)$, as required.
\end{proof}

\begin{lemma}\label{lemNoInfFix}
Suppose $e,f$ are two good points such that $\trd^0(e,f)\geq 3$. Suppose $x$ is a good point which satisfies $xef=x$. Then $\trd^0(x/e,f)=0$.
\end{lemma}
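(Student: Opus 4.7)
Plan: I will argue by contradiction, supposing $\trd^0(x/e, f) \geq 1$. The $\acl^0(e, f)$-constructible set of good fixed points of $\gei_f \circ \gei_e$ then contains a 1-dimensional irreducible component $C$ through $x$. I will derive a contradiction by showing, first, that $C$ lies in the polar locus $D_L := \{z \in S : L \subseteq T_z S\}$ with $L := \overline{ef}$, and second, that the pairs $(e', f')$ for which $D_{\overline{e'f'}}$ can be infinite form a subset of $S \times S$ that is too small to contain $(e, f)$.

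For the pointwise analysis, fix a good $z \in S$ with $\gei_f(\gei_e(z)) = z$ and set $y := \gei_e(z)$, so $\gei_f(y) = z$. The divisors of $S$ on $\overline{ze}$ and $\overline{zf}$ are $z+e+y$ and $z+f+y$; if $y \neq z$, both lines pass through the two distinct points $z$ and $y$, hence coincide, placing $z, y, e, f$ on a single line meeting the cubic $S$ in only three points with multiplicity. Since this line is not contained in $S$ by goodness of $z$, some coincidence occurs, and those compatible with well-definedness of $xef$ (which excludes $y = f$ and $z = e$) restrict $z$ to a finite set. In the main case $y = z$ one obtains $\gei_e(z) = z = \gei_f(z)$, equivalent to $\{e, f\} \subseteq T_z S \cap S = C_z$, i.e., $L \subseteq T_z S$. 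Hence $C$ must be contained in $D_L$ away from a finite set, so $D_L$ is infinite.

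For the dimension count via the Gauss map: since $S$ is smooth, $\nu \colon S \to (\P^3)^*$, $z \mapsto T_z S$, is a morphism, birational onto the dual surface $S^* := \nu(S)$, which is irreducible of degree $d(d-1)^2 = 12$. Writing $L^* \subseteq (\P^3)^*$ for the dual line (planes through $L$), we have $D_L = \nu^{-1}(L^*)$, so $D_L$ is infinite iff $L^* \subseteq S^*$. Via the self-duality $\operatorname{Gr}(1, \P^3) \simeq \operatorname{Gr}(1, (\P^3)^*)$, the set $\mathcal{L}_{\mathrm{bad}}$ of such $L$ corresponds to the Fano variety of lines in $S^*$; since $S^*$ is an irreducible surface of $\P^3$ that is not a plane, $\dim \mathcal{L}_{\mathrm{bad}} \leq 1$, by the classical observation that an irreducible surface through whose generic point passes a 1-parameter family of lines in it must be a cone at each such point, and two distinct cone vertices force the surface to be a plane. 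The set $B := \{(e', f') \in S^2 : \overline{e'f'} \in \mathcal{L}_{\mathrm{bad}}\}$ has a 2-dimensional part coming from the $27$ lines $L \subseteq S$ (which do lie in $\mathcal{L}_{\mathrm{bad}}$, since for $z \in L$ we have $T_z S \supseteq L$ and hence $\nu(L) = L^* \subseteq S^*$), contributing $\bigcup_{L \subseteq S} L \times L$, but this is avoided by the goodness of $e$; the remaining part, from bad lines $L \not\subseteq S$, has dimension $\leq 1$ (since $|L \cap S| \leq 3$, giving finite fibres over $\mathcal{L}_{\mathrm{bad}}$) and is avoided by $\trd^0(e, f) \geq 3$. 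Hence $(e, f) \notin B$, $D_L$ is finite, contradicting the existence of $C$, and so $\trd^0(x/e, f) = 0$.

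The main obstacle is the dimension bound on the Fano variety of $S^*$ together with the degree computation $\deg S^* = 12$; these are classical but need some care with Gauss maps of smooth hypersurfaces and the geometry of lines on non-planar surfaces in $\P^3$.
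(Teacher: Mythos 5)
Your proof is correct, and it reaches the same geometric midpoint as the paper before diverging. Both arguments first show that a good fixed point $z$ of $\gamma_f\circ\gamma_e$ (away from excluded degenerate cases) satisfies $\gamma_e(z)=z=\gamma_f(z)$, i.e.\ $e,f\in C_z$, equivalently $\overline{ef}\subseteq T_zS$; incidentally, your case analysis for $y\neq z$ can be shortened, since once the two lines $\overline{ze}$ and $\overline{yf}$ coincide they cut out the same divisor on $S$, so $z+e+y=y+f+z$ already forces $e=f$, and there is in fact no finite exceptional set at all. The paper then takes $x'\equiv^0_{e,f}x$ with $x'\ind^0_{e,f}x$, notes $e,f\in C_x\cap C_{x'}$ with $C_x\neq C_{x'}$ irreducible (Fact~\ref{f:C_a}), deduces $\trd^0(e,f/x,x')=0$, and obtains $\trd^0(e,f,x,x')\leq 4$, contradicting $\trd^0(e,f,x,x')\geq 5$. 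You instead pass through the Gauss map $\nu:S\to(\P^3)^*$, the dual surface $S^*$, and a dimension bound on the Fano variety of $S^*$. This works, but invokes substantially heavier input: note in particular that you need $\nu$ to be \emph{finite}, not merely birational, to conclude that $D_L$ infinite forces $L^*\subseteq S^*$ (mere birationality would allow positive-dimensional special fibres; finiteness follows from Zak's tangency theorem for smooth hypersurfaces in characteristic $0$), plus $\deg S^*=12$ (to rule out $S^*$ being a plane), plus the Fano-variety dimension bound for an irreducible non-planar surface. The paper's argument is shorter and stays entirely within the toolkit it has already built (Fact~\ref{f:C_a} and the formal rules for Geiser compositions), while yours buys a concrete geometric picture of the exceptional locus (lines $L$ with $L^*\subseteq S^*$) at the cost of citing several classical but nontrivial algebro-geometric facts.
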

\begin{proof}
  Suppose not. Then $x \notin \{e,f,ef\}$ ($ef$ is defined since $e \neq  f$), so $xe \neq  f$ and $xef \neq  f$, so $xe=xeff=xf$. Note that $xe\neq e$, since otherwise $ef = xef = x$. Suppose $xe\neq x$, then $e=exx=xex=xfx=fxx=f$, a contradiction. Thus, $xe=x=xf$ and so $e,f\in C_x$.

  Take $x' \equiv ^0_{e,f} x$ with $x' \ind ^0_{e,f} x$. In particular, $x'ef=x'$ and $\trd^0(x'/e,f,x)=\trd^0(x/e,f)\geq 1$ and $\trd^0(e,f,x,x')= \trd^0(e,f)+\trd^0(x/e,f)+\trd^0(x'/e,f,x)\geq 5$. We also have $e,f\in C_{x'}$. Since $C_{x'}$ and $C_x$ are distinct irreducible curves by Lemma~\ref{f:C_a}, $C_{x'} \cap C_x$ is a finite set, and so we get $\trd^0(e,f/x,x')=0$ and $\trd^0(e,f,x,x')=\trd^0(x,x')\leq 4$, a contradiction. 
\end{proof}

\begin{lemma}\label{b=c}
$a\neq d$ and $b\neq c$.
\end{lemma}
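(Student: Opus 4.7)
The plan is to prove each inequality by contradiction, using Lemma~\ref{lemNoInfFix} in both cases. For the common set-up, note that $\ell\cap F$ is an infinite constructible subset of the irreducible curve $\ell=\ell_{d,a}$ defined over $\acl^0(a,b,c,d)$, hence cofinite in $\ell$, so it contains the generic point $x_0$ of $\ell$ over $\acl^0(a,b,c,d)$. Because $\ell$ is not a line by Lemma~\ref{l:ellBasics}(ii), $x_0$ is good, and $\trd^0(x_0/a,b,c,d)\geq\dim\ell=1$.

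First, assume $a=d$. Applying $\gei_a$ to the identity $x_0abca=x_0$ yields $y_0 bc=y_0$, where $y_0:=x_0a$; the $R_S'$-conditions witnessing strong $(a,b,c,d)$-fixedness of $x_0$ supply the needed well-definedness ($y_0\neq b$, $y_0 b\neq c$, and $\gei_a^2(x_0abc)=x_0abc$). Since $\ell\neq C_a$ (Lemma~\ref{l:ellBasics}(ii)), $\gei_a$ is biregular at $x_0$, so $y_0$ is a generic point of $\ell_{a,b}$ over $\acl^0(a,b,c,d)$; as $\ell_{a,b}$ is not a line, $y_0$ is good. A transcendence count shows $\trd^0(b,c)\geq 3$: from $\trd^0(a,b,c,d)=\trd^0(a,b,c)>4$ and $\trd^0(a,b)=4$ we get $\trd^0(c/a,b)\geq 1$, hence $\trd^0(b,c/a)\geq 3$ and so $\trd^0(b,c)\geq 3$. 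Lemma~\ref{lemNoInfFix} then gives $\trd^0(y_0/b,c)=0$, contradicting
\[\trd^0(y_0/b,c)\geq\trd^0(y_0/a,b,c,d)=\trd^0(x_0/a,b,c,d)\geq 1,\]
where the equality uses that $y_0$ and $x_0$ are interalgebraic over $a$.

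Now assume $b=c$. The condition $R_S'(x_0 ab,c,x_0 abc)$ forces $x_0 a\notin C_b$ (otherwise $\gei_b(x_0 a)=b=c$), so $\gei_b^2$ acts as the identity at $x_0 a$ and $x_0 abcd=x_0 abbd=x_0 ad$, yielding $x_0 ad=x_0$. Well-definedness of $x_0 ad$ follows from the strong-fixedness chain (which gives $x_0\neq a$ and $x_0 a\neq d$), and $x_0$ is good as above. From $\trd^0(a,b,d)=\trd^0(a,b,c,d)>4$ and $\trd^0(b/a,d)\leq 2$ we obtain $\trd^0(a,d)\geq 3$. Lemma~\ref{lemNoInfFix} then gives $\trd^0(x_0/a,d)=0$, contradicting $\trd^0(x_0/a,d)\geq\trd^0(x_0/a,b,c,d)\geq 1$.

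The main technical obstacle is the bookkeeping of well-definedness and goodness: justifying the pointwise identities $\gei_e^2=\mathrm{id}$ at the specific inputs, and verifying that the points fed into Lemma~\ref{lemNoInfFix} are good. Both ultimately reduce to Lemma~\ref{l:ellBasics}(ii), which tells us that $\ell_{d,a}$ and $\ell_{a,b}$ are irreducible curves distinct from each of the $27$ lines on $S$ and from each $C_e$; they therefore meet these exceptional loci in only finitely many points, which the generic $x_0$ avoids.
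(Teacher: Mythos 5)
Your argument is correct and follows essentially the same route as the paper: cancel the repeated Geiser involution to get a two-involution fixed identity at a generic point, run a transcendence-degree count to verify the $\trd^0\geq 3$ hypothesis, and apply Lemma~\ref{lemNoInfFix} for the contradiction. The paper writes out only the case $a=d$, taking a generic point of $\ell_{a,b}$ directly (which is strongly $(b,c,d,a)$-fixed by Lemma~\ref{l:ellBasics}(iii)) rather than pushing $x_0\in\ell_{d,a}$ forward by $\gamma_a$, and leaves $b=c$ to the reader; your more explicit bookkeeping of well-definedness, non-membership in $C_a$ or $C_b$, and goodness is sound.
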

\begin{proof}
Suppose $a=d$. Then $\trd^0(b,c)\geq \trd^0(a,b,c,d)-\trd^0(a/b,c)\geq 5 - 2 = 3$.
Let $x \in \ell_{a,b}$ be generic over $\acl^0(a,b,c,d)$.
Then $x$ is good (by Lemma~\ref{l:ellBasics}(i)). So $xbc=xbcda=x$.
So by Lemma~\ref{lemNoInfFix}, $\trd^0(x/b,c) = 0$, contradicting the genericity.
 \end{proof}

\begin{lemma} \label{ab=c}
No three of $a,b,c,d$ are collinear; in particular, no two are equal.
\end{lemma}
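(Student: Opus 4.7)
The plan is to argue by contradiction. The hypotheses and conclusion of Theorem~\ref{t:coplanar} are invariant under the symmetries generated by the cyclic shift $(a,b,c,d) \mapsto (c,d,a,b)$ and the reversal $(a,b,c,d) \mapsto (d,c,b,a)$, and these act transitively on the four three-element subsets of $\{a,b,c,d\}$. It therefore suffices to rule out the case that $\{a,b,c\}$ are collinear, i.e.\ $ab = c$; this will also cover the ``no two equal'' clause, since e.g.\ $a = c$ would make $\{a,b,c\}$ trivially collinear on $\overline{ab}$.

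Assume $ab = c$ and let $L \subseteq \P^3$ be the line through $a,b,c$. I first show $d \notin L$: since $a$ is good, $L \not\subseteq S$, so $L \cap S = \{a,b,c\}$. Now $d \neq a$ by Lemma~\ref{b=c}, $d \neq c$ since $\trd^0(c,d) = 4$, and $d = b$ would force $\{a,b,c,d\} \subseteq \acl^0(a,b)$ and so $\trd^0(a,b,c,d) \leq 4$, contradicting the hypothesis. Thus the plane $\pi$ spanned by $L$ and $d$ is well-defined and contains $a,b,c,d$.

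The main step is to show $\ell := \ell_{d,a} \subseteq \pi$, contradicting Assumption~\ref{a:nonplanar}. Since $\ell$ is irreducible with infinitely many strongly $(a,b,c,d)$-fixed points, these are Zariski dense, so I take a generic $x \in \ell$ over $\acl^0(a,b,c,d)$; Lemma~\ref{fixCl} gives $xabcd = x$, and for generic $x$ the points $y = xa$, $z = xab$, $w = xabc$ are defined, distinct, and good, with the consecutive triples $(x,a,y), (y,b,z), (z,c,w), (w,d,x)$ all collinear. Since $\ell$ is not a line by Lemma~\ref{l:ellBasics}(ii), generic $x \notin L$, so the plane $\pi_x$ spanned by $L$ and $x$ is well-defined. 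Because $L \subseteq \pi_x$ contains $a,b,c$, one has inductively $y \in \overline{xa} \subseteq \pi_x$, then $z \in \overline{yb} \subseteq \pi_x$, then $w \in \overline{zc} \subseteq \pi_x$. For generic $x$, $x \neq w$ (a closed condition), and then the collinearity of $w,d,x$ forces $d \in \overline{xw} \subseteq \pi_x$. Hence $\pi_x$ contains both $L$ and $d$, so $\pi_x = \pi$ and $x \in \pi$. By irreducibility, $\ell \subseteq \pi$, contradicting Assumption~\ref{a:nonplanar}.

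The main obstacle is handling the genericity conditions cleanly: one must check that the exceptional loci on $\ell$ (where $\gamma_a$ is not regular, where chain points coincide, or where $x \in L$) are proper closed subsets of $\ell$, so that a generic $x$ avoids all of them simultaneously. The geometry itself --- tracking the whole chain of collinear triples inside the single plane $\pi_x$ via the coplanarity of $a,b,c$ --- is concrete once these points are settled.
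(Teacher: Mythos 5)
Your proof takes essentially the same approach as the paper's: suppose for contradiction that $a,b,c$ are collinear, take a generic strongly-fixed $x$ on $\ell_{d,a}$, trace the chain $(x,a,xa)$, $(xa,b,xab)$, $(xab,c,xabc)$, $(xabc,d,x)$, and show everything lies on one plane, contradicting Assumption~\ref{a:nonplanar}. The paper packages the coplanarity step with two planes $\pi_0 = \langle x,a,b\rangle$ and $\pi_1 = \langle xa,b,c\rangle$ and shows $\pi_0=\pi_1$ (both contain $\overline{ab}$ and $xa\notin\overline{ab}$); you instead keep the whole chain inside one plane $\pi_x = \langle L,x\rangle$ by induction along the chain. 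This is the same computation, and your version is arguably cleaner. You are also more explicit about first establishing $d\notin L$, which is needed so that $\pi_x$ is actually independent of $x$, and which the paper leaves tacit.

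One small imprecision worth flagging: your reduction to $ab=c$ and the statement $L\cap S=\{a,b,c\}$ quietly assume $a,b,c$ pairwise distinct, but at this point only $a\neq b$ (from $\trd^0(a,b)=4$) and $b\neq c$ (Lemma~\ref{b=c}) are known. The degenerate case $a=c$ is one of the collinearity cases you explicitly claim to cover, and there $L=\overline{ab}$ and $L\cap S$ may contain a third point $ab\notin\{a,b,c\}$, which your list of excluded values for $d$ does not address. The patch is the same trick you already use for $d=b$: if $a=c$ and $d\in L\cap S$ then $d\in\{a,b,ab\}\subseteq\acl^0(a,b)$, and also $c=a\in\acl^0(a,b)$, so $\trd^0(a,b,c,d)\leq 4$, contradicting the hypothesis. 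With that sentence added the argument is complete; the paper's own exposition is similarly terse at this spot.
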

\begin{proof}
  We may suppose for a contradiction that $a,b,c$ are collinear; the other cases follow by symmetry. Note that $a \neq b$ since $\trd^0(a,b) = 4$, and $b \neq c$ by Lemma~\ref{b=c}. Take $x\in\ell_{d,a}$ generic over $\acl^0(a,b,c,d)$. Note that $x$ is strongly $(a,b,c,d)$-fixed, and $x,xa \notin \{a,b,c,d\}$.

  Now consider $x,a,xa,b,xab$; they lie on the plane $\pi_0$ through $\{x,a,b\}$. Similarly, $xa,b,xab,c,xabc$ are contained in the plane $\pi_1$ through $\{xa,b,c\}$. Since $a,b,c$ are collinear and $b\notin \{a,c\}$, $\pi_0$ and $\pi_1$ contain $\{a,b,c\}$. In addition, $xa$ is in $\pi_0\cap\pi_1$, and $xa$ is not collinear with $\{a,b,c\}$ by genericity of $x$. Therefore, $\pi_0=\pi_1$. Hence $d=x(xabc)$ is also on the plane $\pi_0$, so $a,b,c,d$, and $x$ and hence $\ell_{d,a}$, are all on the plane $\pi_0$, contradicting Assumption~\ref{a:nonplanar}.
\end{proof}

We now begin to reason about the degrees and multiplicities involved in our hypothetical situation.
Let $m_a^b = \mu_a(\ell_{a,b})$ and $m_a^d = \mu_a(\ell_{d,a})$ be the 
multiplicities (possibly 0) of $a$ on $\ell_{a,b}$ and $\ell_{d,a}$ respectively.
We make corresponding definitions for the multiplicities of $b$, $c$, and $d$.
The reader may find the following diagram helpful.

\begin{center}
\begin{tikzpicture}[scale=0.8]
\draw[gray, thick] (-2,-3) -- (-2,3);
\draw[gray, thick] (2,-3) -- (2,3);
\draw[gray, thick] (-3,2) -- (3,2);
\draw[gray, thick] (-3,-2) -- (3,-2);
\filldraw[black] (-2,-2) circle (1.5pt) node[anchor=north east]{$a$};
\filldraw[black] (-2,2) circle (1.5pt) node[anchor=south east]{$d$};
\filldraw[black] (2,2) circle (1.5pt) node[anchor=south west]{$c$};
\filldraw[black] (2,-2) circle (1.5pt) node[anchor=north west]{$b$};
\filldraw[black] (1.6,-2) circle (0.2pt) node[anchor=north]{$m_b^a$};
\filldraw[black] (-1.6,-2) circle (0.2pt) node[anchor=north]{$m_a^b$};
\filldraw[black] (1.6,2) circle (0.2pt) node[anchor=south]{$m_c^d$};
\filldraw[black] (-1.6,2) circle (0.2pt) node[anchor=south]{$m_d^c$};
\filldraw[black] (-2,1.7) circle (0.2pt) node[anchor=east]{$m_d^a$};
\filldraw[black] (2,1.7) circle (0.2pt) node[anchor=west]{$m_c^b$};
\filldraw[black] (-2,-1.6) circle (0.2pt) node[anchor=east]{$m_a^d$};
\filldraw[black] (2,-1.6) circle (0.2pt) node[anchor=west]{$m_b^c$};
\path (-2,-3.2) node(1){$\ell_{d,a}$};
\path (3.3,-2) node(1){$\ell_{a,b}$};
\path (2,3.2) node(1){$\ell_{b,c}$};
\path (-3.3,2) node(1){$\ell_{c,d}$};
\end{tikzpicture}
\end{center}

Denote the degrees of our curves by $d_{x,y} := d(\ell_{x,y})$.
By Corollary~\ref{c:geiserCurveMD} and Lemma~\ref{l:multImage}
applied to Lemma~\ref{l:ellBasics}(i),(iv),
we have in this notation:
\begin{lemma} \mbox{} \label{l:multFacts}
  \begin{enumerate}[(i)]\item $d_{a,b} = 2d_{d,a} - 3m_a^d$ and $m_a^b = d_{d,a} - 2m_a^d$.
  \item If $x \neq  a$ then $\mu_{xa}(\ell_{a,b}) \geq  \mu_x(\ell_{d,a})$,
  with equality if $xa \neq  a$.
  \end{enumerate}
  Corresponding statements hold for each of the four corners of the above 
  square and each direction (clockwise or anti-clockwise).
\end{lemma}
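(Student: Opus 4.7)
The plan is to deduce the lemma by direct application of Corollary~\ref{c:geiserCurveMD} (for (i)) and Lemma~\ref{l:multImage} (for (ii)) to the curve $\ell_{d,a}$, using Lemma~\ref{l:ellBasics} to verify the hypotheses of these results and to identify the image of $\ell_{d,a}$ under $\gamma_a^*$.

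For (i), I would first observe that by Lemma~\ref{l:ellBasics}(i)--(ii), $\ell_{d,a}$ is an irreducible curve distinct from $C_a$, so Corollary~\ref{c:geiserCurveMD} applies with $C := \ell_{d,a}$ and gives $d(\gamma_a^*(\ell_{d,a})) = 2 d(\ell_{d,a}) - 3 \mu_a(\ell_{d,a})$ and $\mu_a(\gamma_a^*(\ell_{d,a})) = d(\ell_{d,a}) - 2 \mu_a(\ell_{d,a})$. By Lemma~\ref{l:ellBasics}(iv), $\gamma_a^*(\ell_{d,a}) = \ell_{a,b}$, so translating into the notation $d_{x,y}, m_x^y$ yields (i) exactly.

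For (ii), let $x \in S$ with $x \neq a$. Again using that $\ell_{d,a}$ is irreducible and $\ell_{d,a} \neq C_a$, Lemma~\ref{l:multImage} applied to $C := \ell_{d,a}$ (and with the point ``$b$'' there being our $x$) gives $\mu_{\gamma_a(x)}(\gamma_a^*(\ell_{d,a})) \geq \mu_x(\ell_{d,a})$, with equality when $x \notin C_a$. Rewriting using $\gamma_a^*(\ell_{d,a}) = \ell_{a,b}$ and $\gamma_a(x) = xa$ gives $\mu_{xa}(\ell_{a,b}) \geq \mu_x(\ell_{d,a})$. Finally, by the bullet point opening this subsection (``if $y$ is good and $x\neq y$, then $xy=y$ iff $x \in C_y$''), the equality condition $x \notin C_a$ is equivalent to $xa \neq a$, as required.

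The symmetric statements at each of the other three corners and in the reverse direction follow by the same argument with $(a,b,c,d)$ cyclically permuted or reversed: Lemma~\ref{l:ellBasics} is itself symmetric in the roles of the four vertices and is oblivious to direction, providing in each required case the irreducibility, the noncoincidence with the relevant $C_y$, and the identification $\gamma_y^*(\ell_{x,y}) = \ell_{y,z}$ (or $\gamma_y^*(\ell_{y,z}) = \ell_{x,y}$). There is no substantive obstacle: the whole statement is essentially a repackaging of Corollary~\ref{c:geiserCurveMD} and Lemma~\ref{l:multImage} in the notation fixed for this subsection, once Lemma~\ref{l:ellBasics} is in hand.
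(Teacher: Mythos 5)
Your proof is correct and matches the paper's own justification exactly: the paper dispatches this lemma with the single sentence ``By Corollary~\ref{c:geiserCurveMD} and Lemma~\ref{l:multImage} applied to Lemma~\ref{l:ellBasics}(i),(iv), we have in this notation:''. You have simply spelled out the same application, including the correct translation of the equality condition $x \notin C_a$ into $xa \neq a$ via the opening bullet point.
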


\subsubsection{Symmetry of degrees and multiplicities}

Our first aim is to show that the situation is symmetric in the sense that 
all the degrees $d_{x,y}$ are equal and all the multiplicities $m_x^y$ are 
equal.
We achieve this in Lemma~\ref{multsEq} below.

\begin{lemma}\label{nonZero}
  Each $m_x^y$ is non-zero.
\end{lemma}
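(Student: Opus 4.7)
The plan is a proof by contradiction, exploiting the four-fold rotational symmetry of the configuration (provided by Lemma~\ref{l:ellBasics}) to reduce to a single case. Suppose, after relabelling, that $m_a^d=0$, i.e.\ $a\notin\ell_{d,a}$.

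By Lemma~\ref{l:multFacts}(i) this forces
\[ d_{a,b}=2d_{d,a} \quad\text{and}\quad m_a^b=d_{d,a}.\]
Since $\ell_{d,a}$ is not a line (Lemma~\ref{l:ellBasics}(ii)), $d_{d,a}\ge 2$, so $d_{a,b}\ge 4$. However, any irreducible curve on $S$ that is contained in a plane lies in a hyperplane section of $S$, hence has degree at most $3$. Therefore $\ell_{a,b}$ is not planar, and Lemma~\ref{l:multDeg3Points} is applicable: the sum of multiplicities at any three distinct points of $\ell_{a,b}$ is at most $d_{a,b}=2d_{d,a}$.

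The next step is to locate three points on $\ell_{a,b}$ whose combined multiplicity is large. The point $a$ itself contributes $m_a^b=d_{d,a}$. The point $b$ sits on $\ell_{a,b}$ with multiplicity $m_b^a$. The point $da=\gamma_a(d)$ lies on $\ell_{a,b}$ with $\mu_{da}(\ell_{a,b})\ge m_d^a$ by Lemma~\ref{l:multFacts}(ii), provided $d\ne a$ (which holds by Lemma~\ref{b=c}) and $d\notin C_a$ (which we may arrange generically, since otherwise $d$ would be constrained to a proper subvariety incompatible with $\trd^0(a,b,c,d)>4$). Moreover the three points $a,b,da$ are pairwise distinct: $a\ne b$ by the hypothesis $\trd^0(a,b)=4$; $a\ne da$ since $d\ne a$; and $b\ne da$ would mean $a,b,d$ are collinear, ruled out by Lemma~\ref{ab=c}. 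We therefore obtain
\[ d_{d,a} + m_b^a + m_d^a \le 2d_{d,a}, \quad\text{i.e.,}\quad m_b^a+m_d^a\le d_{d,a}.\]

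To reach a contradiction, the same reasoning is iterated at the other three corners of the square. If, say, $m_b^a=0$ as well, a parallel argument applied to $\ell_{b,c}$ yields analogous inequalities, and combining these with the corner identities $d_{y,y'}=2m_y^{y'}+m_y^{y''}$ from Lemma~\ref{l:multFacts}(i) one derives progressively larger lower bounds on degrees going clockwise; since the cycle closes, these force some $d_{x,y}$ to vanish, contradicting irreducibility. If instead $m_b^a\ge 1$, one exploits $m_b^a+m_d^a\le d_{d,a}=m_a^b$ together with the symmetric inequality obtained from $\ell_{d,a}$ viewed from the $d$-side, and the constraint that $d(\ell)\ge 2$ for each of the four curves, to similarly force a degree to drop to zero.

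The main obstacle is the bookkeeping in this final combinatorial step: the local constraint from a single corner is slack, and the contradiction appears only after systematically propagating around all four corners. One must also verify along the way that the various auxiliary points (like $da$, $db$, $\ldots$) do not accidentally coincide with one of $a,b,c,d$ or land on one of the exceptional curves $C_a,C_b,C_c,C_d$; these degeneracies are excluded by the genericity hypothesis $\trd^0(a,b,c,d)>4$ and Lemmas~\ref{b=c} and~\ref{ab=c}, together with Lemma~\ref{lemNoInfFix} to rule out any coincidence that would force $\ell$ into a lower transcendence locus.
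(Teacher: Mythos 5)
Your opening moves are sound: assuming $m_a^d=0$ and reading off $d_{a,b}=2d_{d,a}$, $m_a^b=d_{d,a}$ from Lemma~\ref{l:multFacts}(i), concluding $\ell_{a,b}$ is non-planar (degree $\ge 4$), and extracting $m_b^a+m_d^a\le d_{d,a}$ from Lemma~\ref{l:multDeg3Points} applied to the three points $a,b,da$ on $\ell_{a,b}$. (A small distraction: you worry about arranging $d\notin C_a$ ``generically'', but $a,b,c,d$ are fixed and $\trd^0(a,b,c,d)>4$ gives you no such freedom; fortunately you don't actually need it, since the inequality direction of Lemma~\ref{l:multFacts}(ii) holds regardless, and if $da=a$ your triple of points just degenerates to a weaker two-point bound.)

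The real problem is that the ``final combinatorial step'' you defer to is in fact where all the work lies, and your sketch of it does not constitute a proof. The inequality $m_b^a+m_d^a\le d_{d,a}$ is slack, and there is no reason the iterate of this at the other corners closes up into a contradiction: you would need to actually run the case split on whether $m_b^a$ vanishes, track it around the square, and verify that the linear system of corner identities $d_{a,b}=2m_a^b+m_a^d$ (and its rotations) plus the degree-sum inequalities has no nonnegative integer solution with each $d_{x,y}\ge 2$ — and you have not done so. The paper's argument is qualitatively different: rather than summing multiplicity bounds at each corner, it chases a \emph{single} point of high multiplicity around the square via Lemma~\ref{l:multFacts}(ii), showing $\mu_a(\ell_{d,a})\to\mu_{ad}(\ell_{c,d})\to\mu_{adc}(\ell_{b,c})\to\mu_{adcb}(\ell_{a,b})$ all stay $\ge d_{a,b}$; this forces a coincidence $adc=b$ (else the last multiplicity equals the degree of the curve, an immediate contradiction with Lemma~\ref{l:multDeg3Points}). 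That forced coincidence then yields $m_b^a=0$ and a cascade of equalities ($d_{b,c}=d_{d,a}$, $m_d^c=m_c^d$, $m_d^a=m_c^b$) which, fed into Lemma~\ref{l:multDeg3Points} applied to $\ell_{c,d}$ and then $\ell_{d,a}$, give the contradiction. You have not found this key pivot — the observation that a coincidence among the orbit points $ad, adc, adcb$ must occur — and without it the approach remains a plausible-looking but unsubstantiated plan rather than a proof.
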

\begin{proof}
  We prove $m_a^b\neq 0$ without using $\trd^0(a,b)=\trd^0(c,d)=4$, hence we get the other cases by symmetry. Suppose $m_a^b=0$. By Lemma~\ref{l:multFacts}(i), $m_a^d=d_{a,b}$ and $d_{d,a} = 2d_{a,b}$.

  If $ad=d$ (note that $a\neq d$ by Lemma~\ref{ab=c}), then $m_d^c = m_{ad}^c \geq m_a^d = d_{a,b}$ by Lemma~\ref{l:multFacts}(ii). By Lemma~\ref{ab=c}, $dc\neq b$, and then by Lemma~\ref{l:multFacts}(ii), $\mu_{dcb}(\ell_{a,b}) \geq  \mu_{dc}(\ell_{b,c}) \geq  \mu_d(\ell_{c,d}) = m_d^c \geq  d_{a,b}$, contradicting Lemma~\ref{l:multDeg3Points}. Thus $ad\neq d$.


By Lemma~\ref{ab=c}, $ad\neq c$ and $adc$ is well-defined. If $adc\neq b$, then $adcb$ is well-defined, and by another application of Lemma~\ref{l:multFacts}(ii), $\mu_{adcb}(\ell_{a,b}) \geq  m_a^d = d_{a,b}$, contradicting Lemma~\ref{l:multDeg3Points}. Thus $adc=b$, and $m_b^c = \mu_{adc}(\ell_{b,c}) \geq  m_a^d = d_{a,b}$. By Lemma~\ref{l:multFacts}(i), $d_{a,b} \leq  m_b^c = d_{a,b} - 2m_b^a$, so $m_b^a=0$ and $m_b^c=d_{a,b}$, and $d_{b,c} = 2d_{a,b} - 3m_b^a = 2d_{a,b} = d_{d,a}$. Since $\ell_{d,a}$ and $\ell_{b,c}$ have the same degree, and they are both images under Geiser involutions of $\ell_{c,d}$, by Lemma~\ref{l:multFacts}(i) we must have $m_d^c=m_c^d$. Similarly, $m_d^a=m_c^b=:m$. Since  $\ell_{a,b}$ is not a line, $d_{d,a} = 2d_{a,b} > 3$, hence $\ell_{d,a}$ is not planar (e.g.\ by Fact~\ref{f:pic}(c),(d)). Hence either $m_d^c = 0$ or $\ell_{c,d}$ is non-planar, and $\mu_{ad}(\ell_{c,d}) \geq m_a^d$, and $ad\notin\{c,d\}$, so $m_a^d+m_d^c+m_c^d\leq d_{c,d} = 4d_{a,b}-3m$ by Lemma~\ref{l:multDeg3Points}. Now $m_c^d = m_d^c = d_{d,a} - 2 m_d^a = 2d_{a,b} - 2m$ and $m_a^d = d_{a,b}$, so $d_{a,b}+2(2d_{a,b}-2m)\leq 4d_{a,b}-3m$, so $d_{a,b}\leq m$. Now $\ell_{d,a}$ is of degree $2d_{a,b}$ and has two distinct points $a$ and $d$ each of multiplicity at least $d_{a,b}$, contradicting Lemma~\ref{l:multDeg3Points} (via non-planarity of $\ell_{d,a}$).
\end{proof}

\begin{lemma} \label{l:nonplanar}
  None of the curves $\ell_{a,b},\ell_{b,c},\ell_{c,d},\ell_{d,a}$ are planar.
\end{lemma}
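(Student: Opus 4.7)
The plan is to argue by contradiction, showing that if $\ell_{d,a}$ is planar then all four curves $\ell_{x,y}$ coincide and lie in a single plane which also contains $a,b,c,d$, contradicting Assumption~\ref{a:nonplanar}. The non-planarity of the other three curves then follows by invoking the symmetric version of Assumption~\ref{a:nonplanar} noted after its statement, running the same argument starting from $\ell_{a,b}$, $\ell_{b,c}$, or $\ell_{c,d}$.

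Suppose $\ell_{d,a}\subseteq\pi$ for some plane $\pi$. Since $\ell_{d,a}$ is an irreducible component of the degree $3$ hyperplane section $\pi\cap S$, its degree $d_{d,a}$ lies in $\{1,2,3\}$. The case $d_{d,a}=1$ is excluded by Lemma~\ref{l:ellBasics}(ii). For $d_{d,a}=2$, Lemma~\ref{l:multFacts}(i) gives $m_a^b = 2 - 2m_a^d$, and combining with Lemma~\ref{nonZero} (which forces $m_a^d\geq 1$ and $m_a^b\geq 1$) yields an immediate contradiction.

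The substantive case is $d_{d,a}=3$, where $\ell_{d,a}$ coincides with $\pi\cap S$ as a divisor. Lemma~\ref{l:multFacts}(i) together with Lemma~\ref{nonZero} then pins down $m_a^d = 1 = m_a^b$ and $d_{a,b}=3$, and symmetrically (using $\ell_{c,d} = \gei_d^*(\ell_{d,a})$) $m_d^a = 1 = m_d^c$ and $d_{c,d}=3$; in particular $a,d \in \ell_{d,a}\subseteq\pi$. The key step is then to prove $\ell_{a,b}=\ell_{d,a}$: for any $x \in \ell_{d,a}\setminus(\{a\}\cup C_a)$, the line spanned by $a$ and $x$ lies in $\pi$, so its third intersection $\gei_a(x)$ with $S$ lies in $\pi\cap S = \ell_{d,a}$. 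Since $\gei_a$ is biregular on $S\setminus C_a$, this produces infinitely many points in $\ell_{d,a}\cap\ell_{a,b}$, and two irreducible curves with infinite intersection must be equal. Iterating this with $\gei_b$ (using $b\in\ell_{a,b}=\ell_{d,a}\subseteq\pi$, which holds because $m_b^a\neq 0$), then $\gei_c$ and $\gei_d$, I obtain $\ell_{a,b}=\ell_{b,c}=\ell_{c,d}=\ell_{d,a}$ and $b,c,d\in\pi$, contradicting Assumption~\ref{a:nonplanar}.

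The main obstacle I anticipate is the $d_{d,a}=3$ case, and specifically the passage from the set-theoretic inclusion $\gei_a(\ell_{d,a}\setminus C_a)\subseteq\ell_{d,a}$ to the equality $\ell_{a,b}=\ell_{d,a}$ of irreducible curves. Beyond this geometric step, the proof is bookkeeping with the degree-multiplicity identities from Lemma~\ref{l:multFacts} and Corollary~\ref{c:geiserCurveMD}, and with the nonvanishing provided by Lemma~\ref{nonZero}.
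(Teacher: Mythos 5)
Your proof is correct and uses the same key input as the paper's one-line proof: Lemma~\ref{nonZero} places each of $a,b,c,d$ on its two adjacent curves, and one then propagates planarity around the square via the Geiser involutions to contradict Assumption~\ref{a:nonplanar}. The degree case analysis and the stronger claim $\ell_{a,b}=\ell_{d,a}$ are an unnecessary detour: once $m_a^d>0$ gives $a\in\ell_{d,a}\subseteq\pi$, each line $\overline{ax}$ with $x\in\ell_{d,a}$ lies in $\pi$, so $\gamma_a(x)\in\pi\cap S$, and the irreducible curve $\ell_{a,b}$ has infinitely many points in $\pi$ and hence lies in $\pi$ regardless of $d_{d,a}$; repeating with $b,c,d$ finishes the argument.
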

\begin{proof}
  If any one is planar then, by Lemma~\ref{nonZero}, they all lie on a single plane containing $\{a,b,c,d\}$, contradicting Assumption~\ref{a:nonplanar}.
\end{proof}

\begin{lemma}\label{irre}
Suppose two points $x$ and $y$ in $S$ are on a plane $\pi \in \Pi$ and $\trd^0(x,y)=4$. Let $C_{\pi}:=S\cap \pi$ be the planar curve through $\pi$. Then $C_{\pi}$ is irreducible.
\end{lemma}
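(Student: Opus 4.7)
The plan is a short argument by contradiction using the fact (Fact~\ref{f:lines}) that $S$ contains only finitely many lines, combined with the genericity of $x,y$.

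First I would observe that $C_\pi$ is a plane cubic curve (by B\'ezout/degree considerations, or Fact~\ref{f:pic}(c)). If $C_\pi$ is reducible, then its factorization in $\pi \cong \mathbb{P}^2$ must contain a linear factor, so $C_\pi \supseteq L$ for some line $L \subseteq \pi \cap S$.

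Next, the key point: because $S$ contains only 27 lines and this set is invariant under $\Aut(S/E)$, each such line is $\acl^0(\emptyset)$-\cstr/ (equivalently, its defining parameters lie in $E$, since $\L$ contains constants for parameters defining $S$). So $L$ is \cstr/.

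From $\trd^0(x,y) = 4$ and $\dim(S) = 2$ I would conclude $\trd^0(x) = \trd^0(y) = 2$ and $\trd^0(y/x) = 2$, so both $x$ and $y$ are generic points of $S$; in particular neither lies on the proper \cstr/ curve $L$. Since $L \subseteq \pi$ and $x \in \pi \setminus L$, the plane $\pi$ is \emph{uniquely} determined as the plane spanned by $L$ and $x$, so $\pi$ is $x$-\cstr/, and hence so is the curve $C_\pi = \pi \cap S$. But then $y \in C_\pi$ forces $\trd^0(y/x) \leq \dim(C_\pi) = 1$, contradicting $\trd^0(y/x) = 2$.

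There is no real obstacle; the only small point to check is that one of the components of $C_\pi$ is indeed a line (immediate from degree considerations in $\pi \cong \mathbb{P}^2$), and that a line on $S$ is \cstr/, which is immediate from finiteness of the set of lines on $S$.
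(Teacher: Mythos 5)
Your proof is correct and follows essentially the same route as the paper's: reducibility forces a line component $L\subseteq S$, finiteness of lines on $S$ makes $L$ \cstr/, genericity of $x$ puts it off $L$ so that $\pi$ (spanned by $L$ and $x$) and hence $C_\pi$ lie in $\acl^0(x)$, and then $y\in C_\pi$ gives $\trd^0(y/x)\le 1$, contradicting $\trd^0(x,y)=4$. The only cosmetic difference is that the paper phrases ``$x$ is not on $L$'' via the goodness of $x$ rather than via genericity directly, but these amount to the same thing.
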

\begin{proof}
Suppose not, then $C_{\pi}$ contains a line. Since $\trd^0(x) = 2$, $x$ is good, so is not on this line. So since there are only finitely many lines in $S$, we have $\pi \in \acl^0(x)$. Since $y\in C_{\pi}$, we have $\trd^0(y/\pi)=1$. Hence, $\trd^0(y/x)\leq \trd^0(y/\pi)=1$ which contradicts that $\trd^0(x,y)=4$.
\end{proof}

\begin{lemma} \label{ab=cd}
$ad\neq bc$ and $ab \neq cd$.
\end{lemma}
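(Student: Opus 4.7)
\emph{Plan.} The plan is to derive a contradiction from the hypothesis $ad = bc =: p$; the case $ab = cd$ is symmetric, using the other diagonal of the closed quadrilateral considered below. By Lemma~\ref{ab=c} the lines $\overline{ad}$ and $\overline{bc}$ are necessarily distinct (otherwise three of $a,b,c,d$ would be collinear) and meet at $p$, spanning a plane $\pi \ni a,b,c,d$; moreover $p \notin \{a,b,c,d\}$, since e.g.\ $p = a$ would force $a,b,c$ collinear. Using $\trd^0(a,b) = 4$, Lemma~\ref{irre} gives that $C_\pi := S \cap \pi$ is irreducible, so its smooth part carries the abelian group of Fact~\ref{f:cubicGroup}, and $\gamma_d \gamma_c \gamma_b \gamma_a$ restricts on this group to the translation by $a-b+c-d$. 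By Assumption~\ref{a:nonplanar}, $\ell_{d,a} \not\subseteq \pi$.

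\emph{Key geometric step.} The main input will be the following collinearity observation. Take $x \in \ell_{d,a}$ generic over $\acl^0(a,b,c,d)$, so $x$ is strongly $(a,b,c,d)$-fixed and $x \notin \pi$; consequently each of $xa, xab, xabc$ also lies outside $\pi$, since any line through one of $a,b,c,d$ meets $\pi$ only at that point. Form the closed quadrilateral $L_1 = \overline{x,a,xa}$, $L_2 = \overline{xa,b,xab}$, $L_3 = \overline{xab,c,xabc}$, $L_4 = \overline{xabc,d,x}$. The plane $\Pi$ spanned by $L_1, L_4$ (which meet at $x$) contains $\overline{ad}$, and the plane $\Pi'$ spanned by $L_2, L_3$ contains $\overline{bc}$; since $p \in \overline{ad} \cap \overline{bc}$, both $\Pi$ and $\Pi'$ contain $p$, so $\Pi \cap \Pi'$ is a line through $p$, through $xa$ (in $L_1 \cap L_2$), and through $xabc$ (in $L_3 \cap L_4$). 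Thus $p, xa, xabc$ are three distinct collinear points on $S$, giving $\gamma_p(xa) = xabc$, and since this holds on a Zariski-dense subset of $\ell_{a,b}$ we conclude $\gamma_p^*(\ell_{a,b}) = \ell_{c,d}$.

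\emph{Multiplicity analysis and final contradiction.} Lemma~\ref{l:multImage} applied to $\gamma_a(d) = p$ on $\ell_{d,a}$ and to $\gamma_b(c) = p$ on $\ell_{b,c}$ will give $\mu_p(\ell_{a,b}) = m_d^a = m_c^b$, and symmetrically $\mu_p(\ell_{c,d}) = m_a^d = m_b^c$. Combining with $\gamma_p^*(\ell_{a,b}) = \ell_{c,d}$ via Corollary~\ref{c:geiserCurveMD} and the relations of Lemma~\ref{l:multFacts}(i) forces all four degrees $d_{x,y}$ to coincide with some value $3m$ and all eight multiplicities $m_x^y$ to equal $m$. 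We then split on whether the translation $a-b+c-d$ in $C_\pi^{\mathrm{ns}}$ is zero. In the nonzero case, $C_\pi$ contains no strongly fixed points, so the $m$ units of $\ell_{d,a} \cdot C_\pi = 3m$ remaining after the contributions of $a,d$ must be distributed over other intersection points of $\ell_{d,a}$ with $C_\pi$, and chasing these through the Geiser relations together with Lemma~\ref{l:multDeg3Points} should yield an inconsistency with $\ell_{d,a} \not\subseteq \pi$. In the zero case, $ad = bc$ forces $2(a-b) = 0$ in $C_\pi^{\mathrm{ns}}$, so $a-b$ lies in the finite $2$-torsion by Fact~\ref{f:torsionFinite}, and a transcendence-degree computation using $\trd^0(a,b) = \trd^0(c,d) = 4$ and $\trd^0(a,b,c,d) > 4$ produces the desired contradiction. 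The hardest part will be the nonzero-translation case, where locating and bounding the extra intersections of $\ell_{d,a}$ with $C_\pi$ requires careful divisor-class bookkeeping.
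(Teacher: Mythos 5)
Your setup and your key geometric step correctly reproduce the paper's central observation: with $p := ad = bc$, the closed quadrilateral through a generic strongly-fixed $x$ forces $p$, $xa$, $xabc$ to be collinear (this is exactly the paper's ``$x_2x_4 = ad$''), so $\gamma_p^*(\ell_{a,b}) = \ell_{c,d}$. (You should, as the paper does, explicitly verify the distinctness facts making this collinearity non-vacuous, namely $xa \ne xabc$ and $\Pi \ne \Pi'$; the former is the paper's Claim~\ref{c:x2neqx4} and requires Lemma~\ref{lemNoInfFix}.) Your multiplicity-bookkeeping step is also sound: combining the two factorizations $\gamma_p^*\gamma_a^* = \gamma_d^*$ and $\gamma_p^*\gamma_b^* = \gamma_c^*$ on the level of degrees and multiplicities via Corollary~\ref{c:geiserCurveMD} and Lemma~\ref{l:multImage} does yield $m_x^y = m$ and $d_{x,y} = 3m$ for a single $m \ge 1$. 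That is a clean reduction the paper does not state in this form.

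The genuine gap is in your final stage, and it is fatal as written. In the ``zero'' case $2(a-b)=0$ in $C_\pi^{\mathrm{ns}}$, the transcendence-degree computation you describe does \emph{not} produce a contradiction: $2(a-b)=0$ gives $a \in \acl^0(b,\pi)$, hence $\pi \in \acl^0(a,b)$, hence $\trd^0(d/a,b) \le 1$; together with $c \in \acl^0(a,b,d)$ (from $c = \gamma_b(ad)$) this gives $\trd^0(a,b,c,d) \le 5$, which is perfectly consistent with the hypothesis $\trd^0(a,b,c,d) > 4$ and with $\trd^0(a,b) = \trd^0(c,d) = 4$. So no contradiction ensues. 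In the ``nonzero'' case the proposal is an expectation, not an argument: the strongly-fixed points generically lie off $C_\pi$, so knowing that the translation has no fixed points on $C_\pi^{\mathrm{ns}}$ tells you nothing about the intersection $\ell_{d,a} \cap C_\pi$, which in fact is already fully accounted for by the three points $a$, $d$, $ba$ (each of multiplicity $m$), so there are no ``remaining units'' left to chase. More structurally: you have only pinned down that all multiplicities equal some $m \ge 1$; to reach a planarity contradiction by divisor classes (as the paper does) you must show $m=1$, and the paper's route to this --- establishing $\trd^0(x_1/a,d)=1$, hence that $\ell_{d,a}$ and its companions are $\acl^0(a,d)$-\cstr{}/, and then that $b,c$ are generic on $\ell_{a,b},\ell_{c,d}$ over $\acl^0(a,d)$ so that $m_b^a = m_c^d = 1$ --- is exactly the ingredient your plan is missing and cannot be replaced by the case split on the translation $2(a-b)$.
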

\begin{proof}
Suppose $ad=bc$. Then $a,b,c,d$ are on the same plane $\pi_0$. Take $x_1$ a generic point on $\ell_{d,a}$ over $\trd^0(a,b,c,d)$ and set $x_2:=x_1a$, $x_3:=x_1ab$ and $x_4:=x_1abc$. By genericity, $x_1,x_2,x_3$ and $x_4$ are good points. By Lemma~\ref{l:nonplanar}, $x_1 \notin \pi_0$. Then $x_1=ax_2=dx_4$, and $x_4,x_2$ are on the plane $\pi\neq \pi_0$ through $\{x_1,a,d\}$. Similarly, $x_3=bx_2=cx_4$, and $x_4,x_2$ are on the plane $\pi'\neq \pi_0$ through $\{x_3,b,c\}$. Now $\pi\neq\pi'$, since otherwise $x_1 \in \pi=\pi'=\pi_0$.
\begin{claim} \label{c:x2neqx4}
  $x_2 \neq  x_4$.
\end{claim}
\begin{proof}
  Suppose $x_2 = x_4 = x_2bc$.
  Then by Lemma~\ref{lemNoInfFix}, $\trd^0(b,c) = 2$, so $b \in \acl^0(c)$.
  Now $ad=bc$, and so $ad \notin \{a,d\}$ by Lemma~\ref{ab=c}, so $a \in \acl^0(b,c,d) = \acl^0(b,d)$,
  so $5 \leq  \trd^0(a,b,c,d) = \trd^0(b,d) \leq  4$, a contradiction.
  \subqed{c:x2neqx4}
\end{proof}
So since $ad=bc$ is on $\pi\cap\pi'$ and is distinct from $x_2$ and $x_4$, we have $x_2x_4=ad=bc$.

\begin{claim} \label{c:x1ad1}
  $\trd^0(x_1/a,d) = 1$.
\end{claim}
\begin{proof}
  First suppose $\trd^0(x_1,a)<4$.
  Then $\trd^0(x_1/a) \leq  1$, so
  $$1 = \trd^0(x_1/a,b,c,d) \leq  \trd^0(x_1/a,d) \leq  \trd^0(x_1/a) \leq  1,$$
  and we conclude.

  So suppose $\trd^0(x_1,a)=4$.
  Then $\ell_{\pi}:=S\cap\pi$ is an irreducible curve by Lemma~\ref{irre}. We use the additive group structure on non-singular points of $\ell_{\pi}$ as in Fact~\ref{f:cubicGroup}.
  Now $R_S'(x_1,a,x_2)$ and $R_S'(x_1,d,x_4)$ (since $x_1$ is strongly 
  $(a,b,c,d)$-fixed), so by Fact~\ref{f:nonsingPlanar}, $a,d,x_1,x_2,x_4$ are 
  non-singular on $\ell_{\pi}$.
  Choose a non-singular base point $u_\pi\in\ell_{\pi}$ and let $v_{\pi}:=u_{\pi}\circ u_{\pi}$, where $\circ$ is the operation defined in Fact~\ref{f:cubicGroup}. Now $v_{\pi}-a-d=a\o d = ad=x_2x_4= x_2\o x_4 = v_{\pi}-x_2-x_4$ and $v_{\pi}-d-x_4=dx_4=x_1=ax_2=v_{\pi}-a-x_2$. Hence, $2x_4=2a$ and $2x_2=2d$. Therefore, $x_2,x_4\in\acl^0(\pi,a,d)$ by Fact~\ref{f:torsionFinite},
  and so $x_1 = x_2a \in\acl^0(\pi,a,d)$. Since $a\neq d$, we have $\trd^0(\pi/a,d)=1$, hence $\trd^0(x_1/a,d)\leq 1$. Since $\trd^0(x_1/a,b,c,d)=1\leq \trd^0(x_1/a,d)$, we get $\trd^0(x_1/a,d)= 1$. 
  \subqed{c:x1ad1}
\end{proof}

Hence $\ell_{d,a} = \locus^0(x_1/\acl^0(a,d))$ is 
$\acl^0(a,d)$-{\cstr} and consequently $\ell_{a,b}=\gamma_a^*(\ell_{d,a})$ and
$\ell_{c,d}=\gamma_d^*(\ell_{d,a})$ are $\acl^0(a,d)$-{\cstr} as
well. Since $ad=bc$ and $ad\not\in\{b,c\}$, we get
$\trd^0(b/a,d,c)=\trd^0(c/a,d,b)=0$. By assumption $\trd^0(a,b,c,d)
\geq  5$, hence $\trd^0(b/a,d)\geq 1$ and $\trd^0(c/a,d)\geq 1$. Since
$m_b^a>0$ by Lemma~\ref{nonZero}, namely $b\in\ell_{a,b}$, we get $b$
is generic on $\ell_{a,b}$ over $\acl^0(a,d)$ as $\trd^0(b/a,d)\geq
1$, in particular $m_b^a=1$. Similarly, $m_c^d=1$ and $c$ is generic
over $a,d$ on $\ell_{c,d}$.  Hence $ba$ is a generic point on $\ell_{d,a}$ over
$\acl^0(a,d)$, since $\trd^0(ba/a,d)=\trd^0(b/a,d)\geq 1$. Therefore,
$\tp^0(x_1/a,d)=\tp^0(ba/a,d)$. Note that $ad=x_4x_2=(x_1d)(x_1a)$,
hence $ad=(bad)(baa)=badb$. Since $ad=bc$, we get $badb=cb$ and
$bad=c$ (since $cb=ad\neq b$ by Lemma~\ref{ab=c}), hence $ba=cd$. By
symmetry, $\ell_{a,b}$ (hence also $\ell_{b,c}$ and $\ell_{a,d}$) is
$\acl^0(a,b)$-{\cstr} and $c$ is a generic point on $\ell_{b,c}$
over $\acl^0(a,b)$ and $m_c^b=1$. Since
$1=m_c^b=d_{c,d}-2m_c^d=d_{c,d}-2$, we get $d_{c,d}=3$ and by symmetry
$d_{x,y}=3$ and $m_x^y=1$ for all $x,y\in\{a,b,c,d\}$.

Suppose $\ell_{a,b}$ has divisor class $\alpha l-\sum^6_{i=1} b_i
e_i$. Then $3=d_0=3\alpha-\sum^6_{i=1} b_i$. By
Lemma~\ref{l:geiserCurve}, $\ell_{d,a}=\gamma^*_{a}(\ell_{a,b})$ has
divisor class \[(8\alpha-3\sum_{i=1}^6b_i-3)\ell-\sum_{i=1}^6(3\alpha-\sum_{j=1}^6b_j-b_i-1)e_i=(6-\alpha)\ell-\sum_{i=1}^6(2-b_i)e_i,\]
and  $\ell_{c,d}$ has divisor class $\alpha l-\sum^6_{i=1} b_i e_i$.

Since $\gamma_{ad}(b)=c$ where $b$ and $c$ are generic on $\ell_{a,b}$
and $\ell_{c,d}$ over $\acl^0(a,d)$ respectively and $\ell_{a,b},
\ell_{c,d}$ are $\acl^0(a,d)$-{\cstr}, we get
$\gamma^*_{ad}(\ell_{a,b})=\ell_{c,d}$. As $da=bc\neq a$, by
Lemma~\ref{l:multFacts}(ii),
$\mu_{da}(\ell_{a,b})=\mu_{d}(\ell_{d,a})=1$.
Hence, $\ell_{c,d}=\gamma^*_{ad}(\ell_{a,b})$ also has divisor class
$(6-\alpha)\ell-\sum_{i=1}^6(2-b_i)e_i$. Therefore, $6-\alpha=\alpha$
and $2-b_i=b_i$ and $\alpha=3$, $b_i=1$ for $1\leq i\leq 6$, and so
$\ell_{c,d}$ is planar, contradicting Lemma~\ref{l:nonplanar}.

This shows $ad\neq bc$. Since we did not use $\trd^0(a,b)=4$ or $\trd^0(c,d)=4$, the 
same argument applies to show that $ab\neq cd$.
\end{proof}

\begin{lemma} \label{l:ab=b}
  $ab \notin \{a,b\}$ and $cd \notin \{c,d\}$.
\end{lemma}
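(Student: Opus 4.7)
My plan is to handle all four statements ($ab \ne a$, $ab \ne b$, $cd \ne c$, $cd \ne d$) uniformly by combining the fifth bullet at the start of Section~\ref{s:compositions} (namely $xy = y$ iff $x \in C_y$, when $y$ is good and $x \neq y$) with the trd hypotheses $\trd^0(a,b) = \trd^0(c,d) = 4$.

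First I would record the trivial set-up: the points $a, b, c, d$ are all good, since each has $\trd^0 \geq 2$ (because $\trd^0(a,b) = 4$ forces $\trd^0(a) = \trd^0(b) = 2$, so $a,b$ are generic in $S$; similarly $c,d$), and generic points of $S$ avoid the finitely many lines in $S$ by Fact~\ref{f:lines}. We also have $a \neq b$ and $c \neq d$ by Lemma~\ref{ab=c}. Consequently $ab = ba$ and $cd = dc$ by the second bullet, and the irreducible curves $C_a, C_b, C_c, C_d$ all have dimension $1$ by Fact~\ref{f:C_a}.

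For the main step, suppose for contradiction that $ab = b$. By the fifth bullet applied with $x = a, y = b$, this is equivalent to $a \in C_b$. But $C_b$ is an $\acl^0(b)$-\cstr/ curve of dimension $1$, so $\trd^0(a/b) \leq 1$, giving
\[\trd^0(a,b) = \trd^0(b) + \trd^0(a/b) \leq 2 + 1 = 3,\]
which contradicts $\trd^0(a,b) = 4$. The case $ab = a$ reduces to the previous one via the symmetry $ab = ba$ (so $ab = a$ forces $b \in C_a$, and the same dimension count applies). The cases $cd = c$ and $cd = d$ are identical, using $\trd^0(c,d) = 4$.

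There is no real obstacle here; the lemma is essentially a sanity check that the strong transcendence-degree hypotheses on $(a,b)$ and $(c,d)$ automatically exclude the degeneracies where the Geiser involution $\gamma_a$ sends $b$ to $a$ or vice versa. Its role is simply to make well-defined the compositions like $abc$ and $cdb$ that will appear in the subsequent refinement lemmas for Theorem~\ref{t:coplanar}.
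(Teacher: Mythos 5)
Your proof is correct and takes essentially the same route as the paper: from $ab\in\{a,b\}$ deduce that one of $a,b$ lies on the tangent-plane curve $C_x$ of the other, which is a $1$-dimensional $\acl^0(x)$-\cstr/ curve, so $\trd^0(a,b)\leq 3$, contradicting the hypothesis. The paper states $ab=a\Rightarrow b\in C_a$ directly and dispatches the remaining cases ``similarly''; your detour through commutativity $ab=ba$ gives the same conclusion.
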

\begin{proof}
  If $ab = a$, then $b \in C_a$, so $\trd^0(b/a) \leq  1$, contradicting $\trd^0(a,b)=4$.
  The other cases are similar.
\end{proof}

\begin{lemma}\label{multsEq}
  All curves $\ell_{d,a},\ell_{a,b},\ell_{b,c},\ell_{c,d}$ have degree $d_0$ for some $d_0>0$,
  and all the multiplicities $m_x^y$ are equal to $m:=d_0/3$.
\end{lemma}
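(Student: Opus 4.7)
My plan is to reduce the problem to a system of linear inequalities in the four degrees $A:=d_{d,a}$, $B:=d_{a,b}$, $C:=d_{b,c}$, $D:=d_{c,d}$, and solve. By Lemma~\ref{l:multFacts}(i) applied in both directions of each of the four Geiser involutions, each of the eight multiplicities can be written as a linear combination of the two adjacent degrees: $m_a^b=(2B-A)/3$, $m_a^d=(2A-B)/3$, $m_b^a=(2B-C)/3$, $m_d^a=(2A-D)/3$, and analogously around the square.

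Next, on each of the four non-planar curves (non-planarity from Lemma~\ref{l:nonplanar}) I identify four distinguished points: the two corners of $\ell_{x,y}$ (at which the multiplicities are the corresponding $m_x^y$ values), and two ``cross-corners'' obtained by pushing the far corners of the two adjacent curves forward via the appropriate Geiser involution. On $\ell_{a,b}$ these cross-corners are $da=\gamma_a(d)$ and $bc=\gamma_b(c)$, at which Lemma~\ref{l:multFacts}(ii) gives the lower bounds $\mu_{da}(\ell_{a,b})\geq m_d^a$ and $\mu_{bc}(\ell_{a,b})\geq m_c^b$; analogously on the other three curves. Lemmas~\ref{ab=c}, \ref{ab=cd}, and \ref{l:ab=b}, combined with the transcendence hypotheses $\trd^0(a,b)=\trd^0(c,d)=4$, give that these four points are pairwise distinct on each curve outside of a few degenerate cases treated below.

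Applying Lemma~\ref{l:multDeg3Points} to each triple of distinguished points on each non-planar curve produces a linear inequality in $A,B,C,D$. The triples $\{a,b,da\}$ and $\{a,b,bc\}$ on $\ell_{a,b}$, together with their cyclic analogues on $\ell_{c,d}$, give $A+B=C+D$ and $B+C=A+D$, hence $A=C$ and $B=D$. The triple $\{a,da,bc\}$ on $\ell_{a,b}$ then yields $A+2C\leq B+2D$, which after substituting $A=C$ and $B=D$ becomes $A\leq B$, and the symmetric triple $\{b,ab,cd\}$ on $\ell_{b,c}$ gives $B\leq A$. Thus $A=B=C=D=:d_0$, and the explicit formulas force every multiplicity $m_x^y$ to equal $d_0/3=:m$, with $d_0>0$ by Lemma~\ref{nonZero}.

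The main obstacle is handling potential coincidences among the distinguished points on each curve: for instance $da=a$ (equivalently $d\in C_a$) is not ruled out directly by our transcendence hypotheses, and similarly for its three symmetric analogues. In each such case Lemma~\ref{l:multFacts}(ii) still imposes an additional lower bound on the multiplicity at the surviving point (e.g.\ $\mu_a(\ell_{a,b})\geq\max(m_a^b,m_d^a)$), which when substituted into a reduced triple recovers the analogous inequality; alternative triples on the opposite curves (such as $\{d,a,ab\}$ on $\ell_{d,a}$, which also yields $A+B\leq C+D$) supply redundancy in the primary system. I expect a careful case analysis in this vein recovers both the primary equalities and the secondary inequalities in all situations.
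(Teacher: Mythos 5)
Your approach is genuinely different from the paper's and, in the non-degenerate case, is arguably cleaner. The paper makes $m_a^b = m_a^d$ its target, proves the key auxiliary Claim~\ref{multgeq} (which is essentially your triple inequality in a local form), and then runs a delicate case analysis whose first step (Claim~\ref{c:table}) requires tracking the multiplicity of the \emph{full} composite $adcb$ around the whole square of curves; the cases then turn on sign comparisons such as $m_a^b \lessgtr m_b^a$, $m_c^d \lessgtr m_d^c$. You instead linearize everything in the four degrees $A,B,C,D$ at the outset (your formulas $m_a^b = (2B-A)/3$, etc.\ all check out against Lemma~\ref{l:multFacts}(i)), use only the ``half-chain'' cross-corners $ab,bc,cd,da$, and read off $A=B=C=D$ from the resulting linear system. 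I have verified that the primary inequalities ($A+B\le C+D$ from $\{a,b,da\}$, $B+C\le A+D$ from $\{a,b,bc\}$, and their analogues on $\ell_{c,d}$) and the secondary ones ($A+2C\le B+2D$ from $\{a,da,bc\}$, $B+2D\le C+2A$ from $\{b,ab,cd\}$) are all correctly computed, and they do pin down $A=B=C=D$ and then $m_x^y = d_0/3$.

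The gap is the degenerate-case analysis, which you defer with ``I expect a careful case analysis\ldots''. This is genuinely where the work is, and it is also precisely where the paper's proof spends most of its effort (the separate treatment of $da=a$ near the end, and the built-in handling via the ``strict if strict'' clause of Claim~\ref{multgeq}). It is therefore a real omission and should be carried out. That said, I believe your framework does survive it, and the case breakdown is better than your sketch hints: by Lemma~\ref{l:ab=b} the cross-corners $ab$ and $cd$ can \emph{never} coincide with a corner (and Lemmas~\ref{ab=c}, \ref{ab=cd} handle all other coincidences), so $\ell_{b,c}$ and $\ell_{d,a}$ always carry all four distinct distinguished points. Only $ad$ on $\ell_{a,b},\ell_{c,d}$ and $bc$ on $\ell_{a,b},\ell_{c,d}$ can degenerate, each in one of two ways ($ad\in\{a,d\}$, $bc\in\{b,c\}$), mutually exclusive within each pair — so there are at most nine cases, fewer modulo the obvious dihedral symmetry. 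In each case, the triples on $\ell_{b,c}$ and $\ell_{d,a}$ alone already yield both primary equalities. For the secondary inequality $A\le B$, when (say) both $ad=a$ and $bc=c$ occur, your ``substituted reduced triple'' $m_d^a + m_b^a + m_c^b \le d_{a,b}$ (a valid weakening, since $\mu_a(\ell_{a,b})=m_a^b\ge m_d^a$) does give $2A+C \le B+2D$, hence $A\le B$ after $A=C$, $B=D$; one can also use the degeneracy inequality $m_a^b\ge m_d^a$, i.e.\ $2B+D\ge 3A$, directly. So your sketch is correct in spirit, but you should replace ``I expect'' with the explicit 9-case (or symmetry-reduced) verification; as it stands this is an incomplete proof, not a complete one along a different route.
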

\begin{proof}
  It suffices to show $m_x^y=m_x^z$ for all $x,y,z$.
  Indeed, setting $d_0 := d_{d,a}$, we then have $d_0-2m_a^d=m_a^b=m_a^d$, hence $m_a^d=d_0/3$ and $d_{a,b} = 2d_0-3m_a^d=d_0$. Symmetrically, $m_b^a=m_b^c=d_0/3$ and $d_{b,c} = d_0$. Similarly, $m_d^c=m_d^a=d_0/3=m_c^b=m_c^d$ and $d_{c,d} = d_0$, as required.

  Given the symmetry of the situation, it suffices to show $m_a^b=m_a^d$.

  By Lemma \ref{ab=cd}, Lemma \ref{ab=c} and Lemma \ref{nonZero}, we have that $m_x^y>0$ for all $x\in\{a,b,c,d\}$ and both $y$, $ab\neq cd$, $ad\neq bc$, and no three of $a,b,c,d$ are collinear.

  \begin{claim}\label{multgeq}
    If there exists a point $e\in\ell_{a,b} \setminus \{a,b\}$, with multiplicity $\mu_e(\ell_{a,b})\geq m_a^d$, then $m_a^b\geq m_b^a$. Moreover, if the first inequality is strict then the second is also strict.  (The same holds symmetrically for each curve in both directions.)
  \end{claim}
  \begin{proof}
    We first show that if $C \subseteq S$ is a non-planar irreducible curve,
    and $a,b,e \in C$ are distinct points,
    then $\mu_a(C) - \mu_b(C) \geq \mu_e(C) - (d(C) - 2\mu_a(C))$.

    Indeed, by Lemma~\ref{l:multDeg3Points}, we have
    \begin{align*} d(C)&\geq \mu_e(C)+\mu_a(C)+\mu_b(C)
    \\&= \mu_e(C) - (d(C) - 2\mu_a(C)) + d(C) - \mu_a(C) + \mu_b(C) ,\end{align*} 
    from which the inequality follows.

    Applying this with $C = \ell_{a,b}$,
    we conclude by Lemma~\ref{l:nonplanar} and Lemma~\ref{l:multFacts}(i).
    \subqed{multgeq}
  \end{proof}

  \begin{claim} \mbox{} \label{c:table}
    \begin{enumerate}[(i)]\item $m_a^b < m_b^a \Rightarrow  m_a^d = m_a^b$.
    \item $m_b^a < m_a^b \Rightarrow  m_b^c = m_b^a$.
    \item $m_c^d < m_d^c \Rightarrow  m_c^b = m_c^d$.
    \item $m_d^c < m_c^d \Rightarrow  m_d^a = m_d^c$.
    \end{enumerate}
  \end{claim}
  \begin{proof}
    First we show (i). So suppose $m_a^b < m_b^a$.
    The point $e:=adcb$ is well-defined (since $ad\neq c$ and $adc\neq b$). Now $e\neq b$; otherwise, since $m_a^b>0$, $ba=adcba=a$ by Lemma~\ref{fixCl}, contradicting Lemma~\ref{l:ab=b}. Note that $\mu_e(\ell_{a,b}) \geq  m_a^d$ by Lemma~\ref{l:multFacts}(ii), so if also $e\neq a$, then $m_a^b\geq m_b^a$ by Claim~\ref{multgeq}, contradicting our assumption. So $adcb=e=a$. Then $m_a^d = m_a^b$ by Lemma~\ref{l:multFacts}(ii), since $ad \neq d$ (else $dcb=adcb=a$ so $dc=ab$, contradiction), and $adc \neq c$ (since $cb\neq a$), and $adcb \neq b$ (since $a \neq b$).

   The other cases follow by symmetry.
   \subqed{c:table}
  \end{proof}

   By Claim~\ref{c:table}(i), we may assume that $m_a^b \geq  m_b^a$.

   Suppose $m_c^d>m_d^c$, then by Claim~\ref{c:table}(iv) we have $m_d^a=m_d^c$. Since $cd\not\in\{c,d\}$ and has multiplicity $m_c^d>m_d^c$ on the curve $\ell_{d,a}$, we get $m_d^a>m_a^d$ by Claim~\ref{multgeq}. Therefore $m_a^b>m_b^a$, since otherwise, applying Claim~\ref{multgeq} to $ba \notin \{a,d\}$ on $\ell_{d,a}$, we get $m_a^d\geq m_d^a$, contradicting $m_d^a > m_a^d$. 
  By Claim~\ref{c:table}(ii), $m_b^a=m_b^c$. It follows from Lemma~\ref{l:multFacts}(i) that $d_1 := d_{a,b} = d_{b,c}$. Since $m_d^c=m_d^a$, also $d_2 := d_{d,a} = d_{c,d}$. Applying Lemma~\ref{l:multFacts}(i) and using $m_c^d > m_d^c = m_d^a > m_a^d$, we obtain the following contradiction:
  $$d_1=2d_2-3m_a^d>2d_2-3m_c^d=d_1.$$

  Therefore, $m_c^d\leq m_d^c$, and recall we are assuming $m_b^a\leq m_a^b$.
  Now $dc \neq c$ and $ab \neq b$ by Lemma~\ref{l:ab=b},
  so applying Claim~\ref{multgeq} twice with points $dc$ and $ab$ on $\ell_{b,c}$, we get $m_c^b=m_b^c$, and so, applying Lemma~\ref{l:multFacts}(i) repeatedly, we obtain $m_b^a=m_c^d$, $d_{a,b}=d_{c,d}$, and $m_d^c=m_a^b$. Since $b\neq c$, either $bc\neq c$ or $cb\neq b$. Suppose $bc\neq c$, then $bc$ is a point on $\ell_{c,d}$ distinct from $d$ and $c$ of multiplicity $m_b^c=m_c^b$ by Lemma~\ref{l:multFacts}(ii), hence $m_c^d\geq m_d^c$ (by Claim~\ref{multgeq}) and so $m_c^d= m_d^c$. Similarly, if $cb\neq b$, then $m_b^a=m_a^b$. In conclusion, $m_b^a=m_a^b=m_c^d= m_d^c$, and it follows by Lemma~\ref{l:multFacts}(i) that $m_c^b=m_b^c=m_d^a=m_a^d$ and $d_{d,a}=d_{b,c}$.

  Suppose $da=a$, then by Lemma~\ref{l:multFacts}(ii), $m_d^a\leq m_a^b$. We now show that $m_d^a\geq m_a^b$, hence $m_a^d=m_d^a=m_a^b$ and we are done. Indeed, we consider two cases, $abc=c$ and $abc\neq c$. If $abc=c$, then $abc=dabc$ (since $da=a$), and $d=dabcd=abcd=cd$ (the first equality from Lemma~\ref{fixCl}), contradicting Lemma~\ref{l:ab=b}. Hence $abc\neq c$, then $\{abc,c,d\}$ are three distinct points on $\ell_{c,d}$. If $m_a^b>m_c^b$, then by Claim~\ref{multgeq}, $m_c^d>m_d^c$, a contradiction. Therefore, $m_a^b\leq m_c^b=m^a_d$ and we are done.

  Recall $ad\neq bc$ and $cb\neq a$, so we may assume that $da,cb,a$ are three distinct points on $\ell_{a,b}$. Then by Lemma~\ref{l:multFacts}(ii), the above inequalities, and Lemma~\ref{l:multDeg3Points}, we have $d_{a,b} \geq  \mu_{da}(d_{a,b}) + \mu_{cb}(d_{a,b}) + m_a^b \geq  m_a^d + m_c^b + m_a^b = 2m_a^d+m_a^b$. Since $m_a^b=d_{d,a}-2m_a^d$ by Lemma~\ref{l:multFacts}(i), we get $d_{d,a}\leq d_{a,b}$. Similarly, $\{ba, cd,a\}$ are distinct points on $\ell_{d,a}$, hence $d_{d,a}\geq 2m_a^b+m_a^d=2m_a^b+d_{a,b}-2m_a^b=d_{a,b}$. Therefore, $d_{d,a}=d_{a,b}$ and $m_a^b=m_a^d$ as desired.
\end{proof}


\subsubsection{Non-singularity of $a,b,c,d$}
Let $m=m_x^y=d_0/3$ be as in Lemma~\ref{multsEq}.
Our next aim, achieved in Lemma~\ref{m=1} below, is to show that $m=1$, i.e.\ that $a,b,c,d$ are not singular on the corresponding curves.

For this, we make limited use of the notion of \emph{infinitely near} points. For our purposes, it will suffice to make the following ad hoc restricted definition; see \cite[p.392, Definition]{Hart} for the more general definition.

\begin{definition} \label{d:1infnear}
  If $C$ is an irreducible curve on $S$, a \defn{1-infinitely near} point on $C$ is a point in the intersection of the strict transform $\widetilde C \subseteq  \widetilde S_y$ of $C$ under the blowup of $S$ at a point $y \in C$ with the exceptional divisor of this blowup. The multiplicity on $C$ of such a 1-infinitely near point is defined to be its multiplicity on $\widetilde C$.
\end{definition}

\begin{lemma} \label{l:genus}
  Let $C$ be an irreducible curve on $S$ with divisor class $\alpha\ell-\sum_{i=1}^{6} b_ie_i$, and let $P_1,\ldots ,P_n$ be distinct points or 1-infinitely near points on $C$, and let $r_i$ be the multiplicity of $P_i$ on $C$.
  Then
  \[ \sum_{i=1}^n \frac12r_i(r_i-1) \leq  \frac12(\alpha-1)(\alpha-2)-\frac12\sum_{i=1}^6b_i(b_i-1) .\]
\end{lemma}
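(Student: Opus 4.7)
The plan is to compute the arithmetic genus $p_a(C)$ of $C$ via the adjunction formula, show that it equals the right-hand side, and then use the classical fact that blowing up a point of multiplicity $r$ on a curve drops the arithmetic genus by $\binom{r}{2}$, combined with the non-negativity of $p_a$ for an irreducible (reduced) curve, to obtain the inequality.

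\textbf{Step 1: Compute $p_a(C)$.} Since $S$ is a smooth cubic surface in $\P^3$, it is a (weak) del Pezzo, and the canonical class is the negative of the hyperplane class: by Fact~\ref{f:pic}(c), $K_S = -(3\ell - \sum_{i=1}^{6} e_i) = -3\ell + \sum_{i=1}^{6} e_i$. Using the intersection pairing from \cite[Proposition~V.4.8(b)]{Hart} (namely $\ell^2 = 1$, $e_i^2 = -1$, and $\ell\cdot e_i = e_i \cdot e_j = 0$ for $i\neq j$), a direct computation gives
\[ C^2 = \alpha^2 - \sum_{i=1}^{6} b_i^2, \qquad C\cdot K_S = -3\alpha + \sum_{i=1}^{6} b_i. \]
The adjunction formula $2 p_a(C) - 2 = C\cdot(C + K_S)$ on the smooth surface $S$ then yields
\[ p_a(C) = \tfrac{1}{2}(\alpha-1)(\alpha-2) - \tfrac{1}{2}\sum_{i=1}^{6} b_i(b_i-1), \]
which is exactly the right-hand side of the claimed inequality.

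\textbf{Step 2: Reduce via successive blowups.} I will iteratively blow up the $P_i$. For a point $P$ of multiplicity $r$ on an irreducible curve $C'$ on a smooth surface, if $\pi : \widetilde X \to X$ is the blowup at $P$ with exceptional divisor $E$, then $\pi^* C' = \widetilde{C'} + rE$, and applying the adjunction formula on $\widetilde X$ (using $K_{\widetilde X} = \pi^* K_X + E$) gives
\[ p_a(\widetilde{C'}) = p_a(C') - \binom{r}{2}; \]
this is the content of \cite[Proposition~V.3.7]{Hart}. Applying this first to the points among the $P_i$ which lie on $S$ itself, and then on the resulting blowup to those $P_i$ which are $1$-infinitely near (here the definition of multiplicity for $1$-infinitely near points in Definition~\ref{d:1infnear} is precisely what makes this work), and noting that the strict transforms remain irreducible and reduced throughout, we obtain an irreducible curve $C^*$ on a smooth surface with
\[ p_a(C^*) = p_a(C) - \sum_{i=1}^{n} \binom{r_i}{2}. \]

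\textbf{Step 3: Conclude by non-negativity.} For any irreducible reduced projective curve $C^*$ on a smooth surface, $p_a(C^*) = h^1(\mathcal{O}_{C^*}) \geq 0$ (since $C^*$ is connected and reduced, so $h^0(\mathcal{O}_{C^*})=1$); equivalently, $p_a(C^*) \geq p_g(\widetilde{C^*}) \geq 0$ where $\widetilde{C^*}$ is the normalization. Combining with Step 2, $\sum_{i=1}^n \binom{r_i}{2} \leq p_a(C)$, and substituting the formula from Step 1 gives the desired inequality. No step here is particularly obstructive; the only subtlety is handling $1$-infinitely near points cleanly, which is taken care of by the two-stage blowup and the ad hoc definition of multiplicity adopted in Definition~\ref{d:1infnear}.
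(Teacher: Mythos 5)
Your proof is correct and follows essentially the same route as the paper's. The paper's one-line proof cites Hartshorne's Example~V.3.9.2, which is exactly the content of your Steps~2--3 (arithmetic genus drops by $\binom{r}{2}$ under blowup at a point of multiplicity $r$, and nonnegativity of the genus at the end), together with the formula for the arithmetic genus of a curve on a cubic surface from \cite[Proposition~V.4.8(g)]{Hart}, which is your Step~1. The only cosmetic difference is that you stop after blowing up the finitely many $P_i$ and invoke $p_a(C^*)\geq 0$ for the reduced irreducible curve $C^*$, whereas the paper cites the nonnegativity of the geometric genus of the full normalization; these are equivalent for the purposes of the inequality, since blowing up further only drops the genus by additional nonnegative amounts. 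Your handling of the $1$-infinitely near points via a two-stage blowup (first the base points, then the infinitely near ones) is the right way to make Definition~\ref{d:1infnear} mesh with \cite[Corollary~V.3.7]{Hart}, and the extra drop $\binom{\mu_y(C)}{2}\geq 0$ contributed by any base point $y$ not among the $P_i$ only strengthens the inequality.
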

\begin{proof}
  This follows from \cite[Example~V.3.9.2]{Hart},
  using that the 1-infinitely near points are among the infinitely near points,
  that the genus of the normalisation of $C$ is non-negative (\cite[Remark~IV.1.1.1]{Hart}),
  and that the right hand side is precisely the arithmetic genus of $C$ (\cite[Proposition~V.4.8(g)]{Hart}).
\end{proof}

\begin{theorem}\label{thm5pts}
  On at least one of the four curves $\ell_{d,a},\ell_{a,b},\ell_{b,c},\ell_{c,d}$,
  there are at least 5 distinct (possibly 1-infinitely near) points each of multiplicity at least $m$.
\end{theorem}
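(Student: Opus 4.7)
The plan is to exhibit five distinct sites (regular or 1-infinitely near points) of multiplicity at least $m$ on one of the four curves, by pushing forward multiplicity-$m$ points under the Geiser involutions and invoking Lemma~\ref{l:multImage}. Focus first on $\ell_{a,b}$. By Lemma~\ref{multsEq}, the corners $a$ and $b$ both have multiplicity $m$. Moreover, since $\ell_{a,b} = \gamma_a^*(\ell_{d,a})$, the image $da := \gamma_a(d)$ of the multiplicity-$m$ corner $d \in \ell_{d,a}$ has multiplicity at least $m$ on $\ell_{a,b}$ by Lemma~\ref{l:multImage}; symmetrically, using $\ell_{a,b} = \gamma_b^*(\ell_{b,c})$, the point $bc := \gamma_b(c)$ has multiplicity at least $m$ on $\ell_{a,b}$. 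This gives four candidate sites.

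For the fifth candidate, I iterate the same construction one step further. The point $cd := \gamma_d(c)$ lies on $\ell_{d,a} = \gamma_d^*(\ell_{c,d})$ with multiplicity at least $m$, so $cda := \gamma_a(cd) \in \ell_{a,b}$ has multiplicity at least $m$. Provided $cda$ is set-theoretically distinct from $a, b, da, bc$, we are done. In the degenerate case $cd \in C_a$, where $\gamma_a$ sends $cd$ to $a$, I instead use the biregular automorphism $\widetilde\gamma_a$ of the blowup $\widetilde S_a$, which sends $cd \in \widetilde\ell_{d,a} \cap \widetilde C_a$ to a 1-infinitely near point of $a$ on $\widetilde\ell_{a,b} \cap E_a$ of the same multiplicity, yielding a fifth site distinct from $a$. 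The remaining possible coincidences of $cda$ with the other candidates are $cda = b$ (equivalent to $a, b, cd$ being collinear), $cda = da$ (equivalent to $cd = d$, i.e.\ $c \in C_d$), and $cda = bc$ (equivalent to $a, cd, bc$ being collinear). When these are excluded directly by earlier hypotheses the argument concludes; otherwise I pass to one of the other three curves $\ell_{b,c}, \ell_{c,d}, \ell_{d,a}$, whose analogous 5-candidate lists (respectively $b,c,ab,cd,dab$; $c,d,bc,da,abc$; $d,a,cd,ab,bcd$) satisfy different and incomparable coincidence conditions.

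The main obstacle is the careful case analysis ensuring distinctness in every configuration. The generic case is immediate, but in each degenerate subcase I must either invoke the blowup/1-infinitely near construction at the coinciding site, or exploit the dihedral-type symmetry of the four-curve cycle by passing to another curve. The scenario where all four 5-candidate lists simultaneously collapse forces a highly constrained pattern of collinearities and $C_x$-incidences on the enlarged set $\{a,b,c,d,ab,bc,cd,da\}$, which should be incompatible with the combination of the hypothesis $\trd^0(a,b,c,d) \geq 5$, Assumption~\ref{a:nonplanar}, and the rigidity established in Lemmas~\ref{b=c}--\ref{multsEq} (in particular, no three corners are collinear, $ab \neq cd$, and $ad \neq bc$).
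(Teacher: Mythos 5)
Your overall strategy --- enumerate four corner/image candidates on one of the $\ell_{x,y}$, push one more point through a Geiser involution to get a fifth, and use 1-infinitely near points when a candidate coincides with a blow-up center --- is the same as the paper's, which works on $\ell_{d,a}$ with candidates $a,d,cd,ba$ and the fifth point $cba$. But there is a genuine gap at exactly the point where the paper's proof does most of its work.

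You assert that the scenario where all four 5-candidate lists simultaneously collapse ``should be incompatible'' with $\trd^0(a,b,c,d)\geq 5$, Assumption~\ref{a:nonplanar}, and the rigidity statements of Lemmas~\ref{b=c}--\ref{multsEq}. This is not the case: the collapse is consistent with all of them, and ruling it out is not a combinatorial matter. In the paper the collapse ($cba=cd$ and, symmetrically, $abc=ad$, $dab=dc$, $bad=bc$) forces $a,b,c,d$ to be coplanar, which is perfectly compatible with no three of them collinear, $ab\neq cd$, $ad\neq bc$, \emph{and} with Assumption~\ref{a:nonplanar} (that assumption concerns planarity of $\ell_{d,a}$ together with $a,b,c,d$, and the curves are already known not to be planar by Lemma~\ref{l:nonplanar}, so coplanarity of the four corners alone is not contradictory). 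The contradiction requires a substantive further argument: one passes to the plane cubic $C_\pi = S\cap\pi$ through $a,b,c,d$, uses the group law on its nonsingular locus (Fact~\ref{f:cubicGroup}) to turn the collapse equations into $3a=3c$, $3b=3d$, $3a+3b=2v$, and then invokes torsion-finiteness (Fact~\ref{f:torsionFinite}) together with a genericity argument in the pencil of planes through $a,b$ to get $\pi\in\acl^0(a,b)$ and $c,d\in\acl^0(a,b,\pi)$, hence $\trd^0(a,b,c,d)=4$, contradicting the hypothesis $\geq 5$. Nothing in your sketch produces this; ``passing to another curve'' does not help because the collapse equations are themselves symmetric across the four curves.

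A smaller issue: your four base candidates $a,b,da,bc$ on $\ell_{a,b}$ are not automatically distinct. The hypotheses give $\trd^0(a,b)=4$ and $\trd^0(c,d)=4$ but say nothing about $\trd^0(a,d)$, so $da=a$ (i.e.\ $d\in C_a$) and $bc=b$ are live possibilities that must be dealt with before you even reach the fifth candidate. The paper's choice of $\ell_{d,a}$ with candidates $a,d,cd,ba$ is not arbitrary: those four are automatically distinct thanks to Lemmas~\ref{b=c}, \ref{ab=c}, \ref{ab=cd}, and \ref{l:ab=b}, and the only blow-up-center coincidences it must handle are $cb\in\{c,b\}$ and $ad\in\{a,d\}$, which it disposes of at the start.
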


\begin{proof}
We prove this by contradiction. Suppose there are no five distinct (1-infinitely near) points of multiplicity at least $m$ on any one of the four curves $\ell_{d,a},\ell_{a,b},\ell_{b,c},\ell_{c,d}$. 

By Lemmas~\ref{ab=c} and \ref{ab=cd},
$a,d,cd,ba$ are distinct points on $\ell_{d,a}$, each with multiplicity $m$ by Lemma~\ref{l:multFacts}(ii).

Suppose $cb=b$. Now $\gamma_b^*(\ell_{b,c})=\ell_{a,b}$, and recall this means by definition that in the blowup $\widetilde S_b$, $\widetilde \gamma_b$ maps the strict transform $\widetilde \ell_{b,c}$ to the strict transform $\widetilde \ell_{a,b}$. Since $cb=b$, $\widetilde \gamma_b(c)$ is in the exceptional divisor of the blowup, so $\widetilde \gamma_b(c)$ is a 1-infinitely near point on $\ell_{a,b}$. Since $\widetilde \gamma_b$ is an automorphism on $\widetilde S_b$, this 1-infinitely near point has multiplicity $m$ on $\ell_{a,b}$. Then since $\gamma_a$ is a local isomorphism at $b$, it induces a map of the corresponding blowups which maps $\widetilde \gamma_b(c)$ to a 1-infinitely near point on $\ell_{d,a}$ (in the blowup at $ba$) with multiplicity $m$. This is a fifth 1-infinitely near point, distinct from $a,d,cd,ba$ since these are points in the usual sense, so this contradicts our assumption. We conclude that $cb \neq  b$.

By symmetry, $cb \notin \{c,b\}$ and $ad \notin \{a,d\}$.

Now $cba\in\{a,d,cd,ba\}$, since $\mu_{cba}(\ell_{d,a})\geq m$ by Lemma~\ref{l:multFacts}(ii).
But $cba\neq d$ by Lemma~\ref{ab=cd}, and $cba \neq  ba$ since $cb \neq  b$.

Suppose $cba=a$. Then $\widetilde \gamma_a(cb)$ is a 1-infinitely near point on $\ell_{d,a}$, and since $\widetilde \gamma_a$ is an automorphism on $\widetilde S_a$ and $cb\neq a$, its multiplicity is $\mu_{cb}(\ell_{a,b})$, which is at least $m$ by Lemma~\ref{l:multFacts}(ii). This contradicts our assumption.

Therefore, $cba=cd$. By symmetry (using $cb \notin \{c,b\}$ and $ad \notin \{a,d\}$), we have $abc=ad$ and $dab=dc$ and $bad=bc$. Now $a$ is co-planar with $\{c,b,d\}$, since $(cb)a=cd$. Let $C_\pi$ be the intersection of $S$ with the plane spanned by $\{a,b,c,d\}$. Then $C_\pi$ is an irreducible curve by Lemma~\ref{irre}. Since $C_\pi$ has degree 3 as a plane curve, if $x,y,z$ are distinct collinear points on $C_\pi$ then each of them is non-singular on $C_\pi$. So since $ab\not\in\{a,b\}$ and $cd\not\in\{c,d\}$, the points $a,b,c,d$ are non-singular on $C_\pi$.

We consider the additive structure on the set $C'$ of non-singular points of $C_\pi$ induced by collinearity defined in Fact~\ref{f:cubicGroup}, with respect to an arbitrary base point $u \in C'$.
Let $v=u\circ u$. Then $xy = x \circ y =v-x-y$ for all $x \neq  y\in C'$.
So $cba=cd$ implies $c+b-a=v-c-d$, and similarly we obtain $a+b-c=v-a-d$, $d+a-b=v-c-d$ and $b+a-d=v-b-c$. The first two equations imply $3a=3c$. Similarly the last two equations imply $3b=3d$. Adding the first equation to the third, we get $3c+3d=2v$. Similarly, $3a+3b=2v$. Now $\trd^0(\pi/a,b) = 0$; indeed, $\trd^0(a,b)=4$ and the family of planes passing through $a,b$ has dimension 1, so otherwise we would have that for any generic $x,y$ and generic plane through them, we have $3x+3y=2v$ on the curve of intersection (for any choice of basepoint), but then this equation would hold generically on such a curve, contradicting Fact~\ref{f:torsionFinite}. Since $3a=3c$, we have $\trd^0(c/a,\pi)=0$ by Fact~\ref{f:torsionFinite}. Similarly, $\trd^0(d/b,\pi)=0$. In conclusion, $\trd^0(a,b,c,d)=\trd^0(c,d/a,b)+\trd^0(a,b)=\trd^0(c,d/a,b,\pi)+4=4$, a contradiction.
\end{proof}

\begin{remark}
  In fact, with some more work it is possible to pick the five points in Theorem~\ref{thm5pts} to all be points in the usual sense.
\end{remark}

\begin{lemma}\label{m=1}
$m=m_a^b=m_a^d=1$ and $\ell_{d,a}$ (and hence each curve) has degree 3.
\end{lemma}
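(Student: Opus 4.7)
The plan is to combine Theorem~\ref{thm5pts} with the genus bound Lemma~\ref{l:genus}, using Cauchy--Schwarz to eliminate the dependence on the individual coefficients $b_i$ of the divisor class.

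First, I would apply Theorem~\ref{thm5pts} to obtain one of the four curves, say without loss of generality $C := \ell_{d,a}$, carrying at least $5$ distinct (possibly $1$-infinitely near) points of multiplicity $\geq m$. By Lemma~\ref{multsEq}, this curve has degree $d_0 = 3m$ and irreducible; write its divisor class as $\alpha\ell - \sum_{i=1}^6 b_i e_i$, so that Fact~\ref{f:pic}(d) gives $\sum_{i=1}^6 b_i = 3\alpha - 3m$.

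Next, I would apply the arithmetic genus inequality of Lemma~\ref{l:genus} to those $5$ points with $r_i \geq m$, which (after doubling) yields
\[ 5m(m-1) \;\leq\; (\alpha-1)(\alpha-2) - \sum_{i=1}^6 b_i(b_i-1). \]
Now Cauchy--Schwarz gives $\sum b_i^2 \geq \tfrac16(\sum b_i)^2 = \tfrac{3}{2}(\alpha-m)^2$, hence
\[ \sum_{i=1}^6 b_i(b_i-1) \;\geq\; \tfrac{3}{2}(\alpha-m)^2 - 3(\alpha-m). \]
Substituting $t := \alpha - m$ and simplifying, the combined inequality reduces to
\[ (t-2m)^2 \;\leq\; -4(m^2 - m - 1). \]
The right-hand side is negative as soon as $m \geq 2$, so the inequality is only satisfiable when $m = 1$. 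Since $m = m_a^d > 0$ is a positive integer by Lemma~\ref{nonZero}, this forces $m = 1$, and then $d_0 = 3m = 3$, giving the claimed degree of $\ell_{d,a}$.

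The main conceptual step is the reduction via Cauchy--Schwarz to a condition involving only $m$ and $t = \alpha - m$; once that is in place, the bound $m^2 - m - 1 \leq 0$ is immediate and forces $m = 1$. No real obstacle is expected: the genus inequality of Lemma~\ref{l:genus} was already set up for precisely this purpose, and Theorem~\ref{thm5pts} was proved in order to produce the five points needed to make the estimate effective.
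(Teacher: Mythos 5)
Your argument is correct and follows the paper's proof essentially verbatim: invoke Theorem~\ref{thm5pts} to get five points of multiplicity $\geq m$ on one of the four curves, feed these into the arithmetic-genus bound Lemma~\ref{l:genus}, eliminate the $b_i$ via Cauchy--Schwarz, and deduce $m=1$. Your substitution $t=\alpha-m$ and the resulting inequality $(t-2m)^2\leq -4(m^2-m-1)$ are algebraically equivalent to the paper's $\frac{2}{9}d_0^2-\frac{2}{3}d_0-2 \leq -\frac{1}{2}(\alpha-d_0)^2 \leq 0$ (note $\alpha-d_0 = t-2m$ and $\frac{2}{9}d_0^2-\frac{2}{3}d_0-2 = 2(m^2-m-1)$ when $d_0=3m$), so the difference is purely cosmetic.
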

\begin{proof}
  By Theorem \ref{thm5pts} there are at least 5 distinct (1-infinitely near) points of multiplicity $m$ on one of the four curves, which we may assume to be $\ell_{d,a}$. Let $d_0=3m$ be the degree of $\ell_{d,a}$. Suppose $\ell_{d,a}$ has divisor class $\alpha\ell-\sum_{i=1}^{6} b_ie_i$. Then $d_0=3 \alpha-\sum_{i=1}^{6}b_i$ as in Fact~\ref{f:pic}(d).

  By Lemma~\ref{l:genus}, $\frac52m(m-1) \leq  \frac12(\alpha-1)(\alpha-2)-\frac12\sum_{i=1}^6b_i(b_i-1)$.

  Since $m=\frac{d_0}{3}$ and $d_0=3 \alpha-\sum_{i=1}^{6}b_i$, we get \[\frac{5d_0}{3}(\frac{d_0}{3}-1)\leq  \alpha^2+2-d_0-\sum_{i=1}^{6}b_i^2.\]

  By Cauchy-Schwarz, $\sum_{i=1}^{6}b_i^2\geq\frac{1}{6}(\sum_{i=1}^6 b_i)^2=\frac{1}{6}(3 \alpha-d_0)^2$. Hence, \[\frac{2}{9}d_0^2-\frac{2}{3}d_0-2 \leq -\frac13d_0^2+\alpha^2 - \frac16(3\alpha-d_0)^2
    = -\frac{1}{2}( \alpha-d_0)^2\leq 0.\]

  However, if $m\neq 1$, then $m\geq 2$ and $d_0\geq 6$ and $\frac{2}{9}d_0^2-\frac{2}{3}d_0-2\geq 2$, a contradiction. Therefore, $m=1$ and $d_0=3$.
\end{proof}

\subsubsection{Planarity}
Finally, we conclude the proof of Theorem~\ref{t:coplanar} by determining the divisor class of $\ell_{a,b}$.

\begin{lemma}
  $\ell_{a,b}$ is planar.
\end{lemma}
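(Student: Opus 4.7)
The plan is to show $\ell_{a,b}$ has the hyperplane-section divisor class $3l - \sum_{i=1}^{6} e_i$ in $\Pic(S)$, which immediately gives planarity by Fact~\ref{f:pic}(c). The argument combines a divisor-class computation propagated through the Geiser chain with an intersection-theoretic lower bound on $\ell_{a,b} \cdot \ell_{c,d}$.

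Writing the class of $\ell_{a,b}$ as $\alpha l - \sum_{i=1}^6 b_i e_i$, the degree-$3$ condition gives $\sum b_i = 3\alpha - 3$. Using the multiplicities $m_x^y = 1$ established in Lemmas~\ref{multsEq} and~\ref{m=1}, I would apply Lemma~\ref{l:geiserCurve} iteratively to the chain $\ell_{a,b} \mapsto \ell_{d,a} \mapsto \ell_{c,d}$, obtaining that $\ell_{d,a}$ has class $(6-\alpha) l - \sum (2 - b_i) e_i$ and $\ell_{c,d}$ has the same class as $\ell_{a,b}$. Effectiveness of $\ell_{d,a}$ forces $b_i \in \{0,1,2\}$; combined with the arithmetic-genus inequality from Lemma~\ref{l:genus} and Cauchy--Schwarz on $\sum b_i = 3\alpha - 3$, a short case check yields $\ell_{a,b}^2 = \alpha^2 - \sum b_i^2 \in \{1, 3\}$, with $3$ occurring only for the hyperplane class $(3, (1,\ldots,1))$ and $1$ for each of the five other admissible (twisted-cubic) classes.

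Next, I would exhibit two distinct common points of $\ell_{a,b}$ and $\ell_{c,d}$. By Lemma~\ref{l:multFacts}(ii), applied at corner $a$ to $d \in \ell_{d,a}$ and at corner $d$ to $a \in \ell_{d,a}$, the point $ad$ lies in both $\ell_{a,b}$ and $\ell_{c,d}$; the symmetric argument at corners $b$ and $c$ using $c, b \in \ell_{b,c}$ places $cb$ in the same intersection. Lemma~\ref{ab=cd} then gives $ad \neq bc = cb$. Assuming $\ell_{a,b} \neq \ell_{c,d}$, these two distinct common points give $\ell_{a,b} \cdot \ell_{c,d} \geq 2$, and since this intersection product equals $\ell_{a,b}^2 \in \{1,3\}$, it must equal $3$, forcing the hyperplane class.

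The remaining degenerate case $\ell_{a,b} = \ell_{c,d}$ I would handle separately: the shared curve would then contain $\{a, b, c, d, ad\}$, and $a, d, ad$ are collinear on the line through $a,d$; since an irreducible twisted cubic in $\P^3$ admits no trisecant (a plane through a trisecant and any fourth curve-point would meet the degree-$3$ curve in $\geq 4$ points, forcing containment), $\ell_{a,b}$ must again be planar. The main technical obstacle is the divisor-class enumeration establishing $\ell_{a,b}^2 \in \{1, 3\}$; once this is in place, the two-point lower bound on $\ell_{a,b} \cdot \ell_{c,d}$ closes out the argument.
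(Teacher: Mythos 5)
Your divisor-class bookkeeping and the intersection-theoretic setup are sound and closely parallel the paper's: you correctly derive that the classes propagate as $(\alpha, b_i) \mapsto (6-\alpha, 2-b_i) \mapsto (\alpha, b_i)$ around the square, and the constraints $b_i \in \{0,1,2\}$ (from $\ell_{a,b}\cdot e_i \geq 0$ and $\ell_{d,a}\cdot e_i \geq 0$) together with non-negativity of arithmetic genus do pin down $\ell_{a,b}^2 \in \{1,3\}$. The identification of $ad$ and $bc$ as two distinct common points of $\ell_{a,b}$ and $\ell_{c,d}$ via Lemma~\ref{l:multFacts}(ii) and Lemma~\ref{ab=cd} is also correct, so in the subcase $\ell_{a,b} \neq \ell_{c,d}$ you correctly force the hyperplane class. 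This is essentially a repackaging of the paper's four-case enumeration into a single self-intersection computation.

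The gap is in your degenerate case $\ell_{a,b} = \ell_{c,d}$. Your trisecant argument requires $a$, $d$, and $ad$ to be three \emph{distinct} collinear points on the twisted cubic, but at this stage nothing rules out $ad \in \{a,d\}$ (equivalently $d \in C_a$ or $a \in C_d$): only $ab \notin \{a,b\}$ and $cd \notin \{c,d\}$ are established (Lemma~\ref{l:ab=b}, which uses $\trd^0(a,b) = \trd^0(c,d) = 4$), and the assumptions give no bound on $\trd^0(a,d)$. If $ad = a$ (say), then the line $\overline{ad}$ meets $\ell_{a,b}$ only in the two points $a, d$ and is not a trisecant, so your argument does not close. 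Note that the analogous issue can simultaneously afflict $bc$, so you cannot simply switch to the $b,c,bc$ trisecant. The paper avoids this: in the degenerate case it also establishes $\ell_{b,c} = \ell_{d,a}$, then computes $\ell_{a,b} \cdot \ell_{b,c} = \alpha\beta - \sum b_ib_i' = 5$ and exhibits the \emph{six} distinct common points $a,b,c,d,ab,cd$ (here $ab$ and $cd$ are known to avoid $\{a,b\}$ and $\{c,d\}$ respectively by Lemma~\ref{l:ab=b}, and they differ from one another by Lemma~\ref{ab=cd} and from $a,b,c,d$ by Lemma~\ref{ab=c}), forcing $\ell_{a,b} = \ell_{b,c}$ and contradicting the distinctness of divisor classes in the non-planar cases. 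To repair your argument you would need either to prove $ad \notin \{a,d\}$ or $bc \notin \{b,c\}$ under the standing hypotheses (which does not appear to follow easily), or to adopt the paper's six-point count on $\ell_{a,b}\cap\ell_{b,c}$.
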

\begin{proof}
By the previous lemma, we know $\ell_{a,b}$ has degree $d_0=3$, and $a$ and $b$ each have multiplicity 1 on $\ell_{a,b}$.


Suppose $\ell_{a,b}$ has divisor class $\alpha l-\sum^6_{i=1} b_i e_i$. Then $3=d_0=3\alpha-\sum^6_{i=1} b_i$. By \cite[Remark~V.4.8.1]{Hart}, $\alpha\geq 1$ and $0\leq b_i\leq \max\{\alpha-1,1\}$ for all $1\leq i\leq 6$.  By Lemma~\ref{l:geiserCurve}, $\ell_{d,a}=\gamma^*_{a}(\ell_{a,b})$ and $\ell_{b,c}=\gamma^*_b(\ell_{a,b})$ each have divisor class \[(8\alpha-3\sum_{i=1}^6b_i-3)l-\sum_{i=1}^6(3\alpha-\sum_{j=1}^6b_j-b_i-1)e_i=(6-\alpha)l-\sum_{i=1}^6(2-b_i)e_i,\]
and $\ell_{c,d}$ has divisor class  $\alpha l-\sum^6_{i=1} b_i e_i$. Define $\beta:=6-\alpha$ and $b_i':=2-b_i$, so $\ell_{d,a}$ and $\ell_{b,c}$ each have divisor class $\beta l-\sum_{i=1}^{6}b_i'e_i$. Therefore, $1\leq\alpha,\beta\leq 5$. By replacing $\ell_{a,b}$ with $\ell_{b,c}$ if necessary (if we show $\ell_{b,c}$ is planar, then it follows that $\ell_{a,b}$ is planar), we may assume $1\leq \alpha\leq 3$. There are four cases:
\begin{enumerate}
\item
$\alpha=1$, $\beta=5$ and $b_i=0$ and $b_i'=2$ for $1\leq i\leq 6$.
\item
$\alpha=2$, $\beta=4$, there is $A\subseteq \{1,2,3,4,5,6\}$ with $|A| = 3$ such that $b_i=b_i'=1$ for $i\in A$ and $b_j=0$, $b_j'=2$ for $j\not\in A$.
\item
$\alpha=\beta=3$, there are $i_0\neq j_0\in \{1,2,3,4,5,6\}$ such that $b_{i_0}=2=b_{j_0}'$, $b_{i_0}'=0=b_{j_0}$ and $b_i=b_i'=1$ for $i\not\in\{i_0,j_0\}$.
\item
The planar case: $\alpha=\beta=3$ and $b_i=b_i'=1$ for all $1\leq i\leq 6$.
\end{enumerate}

In the first three cases, the intersection number (by \cite[Proposition~V.4.8(b)]{Hart}) of $\ell_{a,b}$ and $\ell_{c,d}$ is \[(\alpha l-\sum^6_{i=1} b_i e_i).(\alpha l-\sum^6_{i=1} b_i e_i)=\alpha^2-\sum_{i=1}^6b_i^2=1.\] However both $bc=\gamma_c(b)=\gamma_b(c)$ and $ad=\gamma_d(a)=\gamma_a(d)$ are on $\ell_{a,b}\cap\ell_{c,d}$ and by Lemma \ref{ab=cd}, $ad\neq bc$. Hence $\ell_{a,b}=\ell_{c,d}$. Similarly, the intersection number of $\ell_{b,c}$ and $\ell_{d,a}$ is \[(\beta l-\sum^6_{i=1} b_i' e_i).(\beta l-\sum^6_{i=1} b'_i e_i)=\beta^2-\sum_{i=1}^6b_i'^2=1.\] But $ab\neq cd\in\ell_{b,c}\cap\ell_{d,a}$, hence $\ell_{b,c}=\ell_{d,a}$.
Consider the intersection number of $\ell_{a,b}$ and $\ell_{b,c}$: \[(\alpha l-\sum^6_{i=1} b_i e_i).(\beta l-\sum^6_{i=1} b'_i e_i)=\alpha\beta-\sum_{i=1}^6b_ib_i'=5.\] Note that $a,b,c,d,ab,cd$ are all in $\ell_{a,b}\cap\ell_{b,c}$ (e.g.\ $ab \in \ell_{a,b} = \gamma_b^*(\ell_{b,c})$ since $\ell_{b,c} = \ell_{d,a} \ni a$). By Lemma \ref{ab=c}, $|\{a,b,c,d\}| = 4$. Since $\trd^0(a,b)=4$, we have $ab\neq a$ and $ab\neq b$. Hence $ab\not\in\{a,b,c,d\}$ by Lemma \ref{ab=c}. Similarly $cd\not\in \{a,b,c,d\}$. As $ab\neq cd$ by Lemma \ref{ab=cd}, we get $\{a,b,c,d,ab,cd\}$ is a set of size 6 which is contained in $\ell_{a,b}\cap\ell_{b,c}$. Hence $\ell_{a,b}=\ell_{b,c}$, but $\ell_{a,b}$ and $\ell_{b,c}$ have different divisor classes in the first three cases, a contradiction. 

Therefore, by Fact~\ref{f:pic}(c), $\ell_{a,b}$ is planar.
\end{proof}

This contradicts Lemma~\ref{l:nonplanar}, concluding the proof of Theorem~\ref{t:coplanar}.

\bibliographystyle{amsalpha}
\bibliography{cubicSurfaces}
\end{document}